\documentclass{amsart}[11pt]
\usepackage{graphicx,url,amsthm,xspace,enumitem,subcaption}
\usepackage[dvipsnames]{xcolor}
\usepackage{amscd}
\usepackage{multirow}
\usepackage[utf8]{inputenc}
\usepackage{tikz}
\usetikzlibrary{braids}
\graphicspath{{images/}}

\usepackage{amssymb}
\usepackage{graphics}
\usepackage{graphicx}
\usepackage{amscd}
\usepackage{color}
\usepackage{amsmath, amsthm, epsfig,  psfrag}
\usepackage{graphicx,url,amsthm,xspace,enumitem}
\usepackage{pinlabel}
\usepackage{hyperref}
\usepackage{subcaption}
\usepackage[margin=3cm, marginpar=2.5cm]{geometry}

\usepackage[all,pdftex,arc,curve,color,frame]{xy}
\usepackage{xy}
\xyoption{all}

\usepackage{pinlabel}

\usepackage{import}
\newcommand{%
    \import{./images/}{.pdf_tex}
}[1]{%
    \import{./images/}{#1.pdf_tex}
}

\newcommand{\TN}{\mathbb{T}}
\newcommand{\I}{\operatorname{I}}
\newcommand{\X}{\operatorname{X}}
\newcommand{\T}{\operatorname{T}}

\newcommand{\C}{\mathbb{C}}

\newcommand{\Q}{\mathbb{Q}}
\newcommand{\Z}{\mathbb{Z}}
\newcommand{\cptwo}{\C\textup{P}^2}

\newcommand{\cptwobar}{\overline{\C\textup{P}}\,\!^2}

\newcommand{\ti}{\tilde}
\renewcommand{\d}{\partial}

\renewcommand{\Im}{\operatorname{Im}}

\DeclareMathOperator{\Int}{Int}

\newtheorem{theorem}{Theorem}[section]
\newtheorem{lemma}[theorem]{Lemma}
\newtheorem{prop}[theorem]{Proposition}

\theoremstyle{definition}

\newtheorem{remark}[theorem]{Remark}
\newtheorem{example}[theorem]{Example}

\numberwithin{equation}{section}

\title{An unexpected rational blowdown}

\author{M\'arton Beke}
\address{Alfréd Rényi Institute of Mathematics, Budapest\\ University of Technology and Economics, Budapest, Hungary}
\email{bekem@renyi.hu}

\author{Olga Plamenevskaya}
\address{Department of Mathematics, Stony Brook University, Stony Brook, NY,
11794,  U.S.A.}
\email{olga@math.stonybrook.edu}

\author{Laura Starkston}
\address{Deparment of Mathematics, University of California, Davis, 1 Shields Avenue, Davis, CA, 95616, U.S.A.}
\email{lstarkston@math.ucdavis.edu}

\begin{document}

\begin{abstract}

The rational blowdown operation in 4-manifold topology replaces a neighborhood of a configuration of spheres by a rational homology ball. Such configurations typically arise from resolutions of surface singularities that admit rational homology disk smoothings. 
Conjecturally, all such singularities must be weighted homogeneous and belong to certain specific families: Stipsicz--Szab\'o--Wahl constructed $\Q$HD smoothings for these families and used Donaldson's theorem to obtain very restrictive necessary conditions on the resolution graphs for singularities with this property. In particular, these results, as well as subsequent work of Bhupal--Stipsicz, show that for certain resolution graphs, the canonical contact structure on the link of the singularity cannot admit a $\Q$HD symplectic filling. 
 By contrast, we exhibit Stein rational homology disk fillings for the contact links of an infinite family of rational singularities that are {\em not} weighted homogeneous, producing a new symplectic rational blowdown. Inspiration for our construction comes from de Jong--van Straten's description of Milnor fibers of sandwiched singularities; we use the symplectic analog of de Jong--van Straten theory developed by the second and third authors.  The unexpected Stein fillings are built using spinal open books and nearly Lefschetz fibrations.      

\end{abstract}

\maketitle

\section{Introduction} In 4-manifold topology, a rational blowdown 
surgery \cite{FintushelStern} is used to create exotic pairs of smooth manifolds, \cite{JPark, StipSzabo};  this operation can be done symplectically \cite{Symington}. 
Rational blowdowns typically come from smoothings of certain surface singularities. If $(X, 0)$ is a normal surface singularity with minimal good resolution $\tilde{X}$, then the standard neighborhood of the exceptional divisor in $\tilde{X}$ is a plumbing of disk bundles over the exceptional curves, according to the dual resolution graph $G$. 
If the corresponding configuration of surfaces is contained in a smooth 4-manifold, it can be replaced by the Milnor fiber of a smoothing of the singularity. To create a manifold with small homology, one wants a Milnor fiber that is a rational homology disk. 

From another perspective, understanding the topology of Milnor fibers is an interesting question in singularity theory. By \cite{Wahl}, a surface singularity must be rational if it admits 
a smoothing that is a rational homology disk. However, few rational singularities have such smoothings. All known examples fall into several specific families of weighted homogeneous singularities; the corresponding resolution graphs have one node, of valency three or four, \cite{SSW}. It has been conjectured   (\cite[Conjecture 8.10]{wahl2011rational})  that the known examples give a complete list of surface singularities with $\Q$HD smoothings.

In~\cite{SSW}, Stipsicz--Szab\'o--Wahl  used Donaldson's theorem to attack the question by examining possible embeddings of the homology lattice of the plumbing into the diagonal lattice of the same rank. This gives very restrictive necessary conditions for the dual resolution graphs whose link can admit a symplectic $\Q$HD filling. Bhupal--Stipsicz (\cite{bhupal2011weighted}) settled additional cases by studying  symplectic fillings of the links of the corresponding singularities with their canonical contact structures: they showed that the only weighted homogeneous singularities whose links admit $\Q$HD fillings are those with known 
$\Q$HD smoothings. (This classification is reproved and extended with methods closer to the ones used in this paper in \cite{Beke-inprep} when combined with the results of \cite{PS2}.)

One could perhaps hope that the conjectural list exhausts all options for a symplectic rational blowdown, namely that $\Q$HD symplectic fillings for the contact link only exist for the singularities on this list. In particular, one could conjecture that $\Q$HD symplectic fillings cannot exist for resolution graphs with two or more nodes. However, we prove:   

\begin{theorem}\label{thm:main} Let $G_{k,n}$ be a dual resolution graph as in Figure~\ref{fig:graphs}, 
$(Y_{k,n}, \xi_{k,n})$ the link of the corresponding singularity with its canonical contact structure. 
Then for all $k\geq 0, n\geq 1$, $(Y_{k,n}, \xi_{k,n})$ admits a Stein filling which is a rational homology disk, despite the fact that the surface singularities of the given topological type admit no rational homology disk smoothing. 
\end{theorem}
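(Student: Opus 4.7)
The plan is to build the required Stein filling by applying the symplectic de Jong--van Straten theory of \cite{PS2} to an explicit geometric model for each singularity $(X_{k,n}, 0)$ with resolution graph $G_{k,n}$, then verifying the resulting filling is a rational homology ball by a direct combinatorial computation.

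First, I would realize $(X_{k,n},0)$ as a sandwiched singularity, that is, present it as a normal surface singularity birationally sandwiched between a smooth surface and its modification by blowing up a cluster of infinitely near points. Concretely, I would write down an explicit arrangement of curves in $\cptwo$ (or a Hirzebruch surface) together with a cluster of base points, such that after blowing up the cluster, the proper transforms of the curves and the exceptional divisors give a configuration with dual graph $G_{k,n}$. The two-node shape of $G_{k,n}$ together with the parameters $k$ and $n$ should correspond naturally to the choice of base curves (two ``arms'' meeting the two nodes) and the multiplicities at the blowup points.

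Second, I would construct a \emph{picture deformation} of this arrangement, i.e., a one-parameter family of curve arrangements with prescribed combinatorics of nodal crossings that degenerates to the original. In the algebraic de Jong--van Straten theorem, picture deformations parametrize smoothings of the sandwiched singularity, and since no $\Q$HD smoothing exists for the $(X_{k,n},0)$ under consideration, no algebraic picture deformation with the Betti numbers of a rational homology ball can exist. The key flexibility provided by \cite{PS2} is that only a symplectic (or topological) picture deformation is required, and in this larger category the arithmetic obstructions vanish. Here I would exhibit explicit symplectic picture deformations for the family $G_{k,n}$, designed so that the nodal count and the homology contributions balance correctly.

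Third, I would feed this deformation into the machinery of \cite{PS2} to produce a spinal open book decomposition on $(Y_{k,n},\xi_{k,n})$ compatible with the canonical contact structure, together with a nearly Lefschetz fibration on a Stein 4-manifold $W_{k,n}$ bounding it. The pages of the fibration are built from the proper transforms of the deformed curves, and the vanishing cycles are read off from the nodes of the deformation. To conclude that $W_{k,n}$ is a rational homology disk, I would compute $\chi(W_{k,n})$ directly from the combinatorics of the arrangement and show that the intersection form vanishes over $\Q$; the latter amounts to verifying that the matrix of intersections of the vanishing cycles in the page has full rank modulo the page first homology.

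The main obstacle, and the real content of the theorem, lies in step two: designing symplectic picture deformations for $G_{k,n}$ whose combinatorics simultaneously match the prescribed resolution graph, yield a simply connected total space, and produce vanishing rational second homology. The impossibility of achieving this algebraically makes finding the correct topological model delicate, and choosing the right family $G_{k,n}$ is essential. Once such a deformation is in hand, the verification that $W_{k,n}$ is Stein and fills $\xi_{k,n}$ follows from the general framework of \cite{PS2}, and the homological conclusion reduces to a finite combinatorial check.
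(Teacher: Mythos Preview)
Your proposal correctly identifies the overall framework: the fillings come from the symplectic de Jong--van Straten theory of \cite{PS2}, and the key point is that one needs only a smooth DJVS immersed disk arrangement compatible with the decorated germ rather than an algebraic picture deformation. You also correctly locate the difficulty in step two. However, the proposal has a genuine gap precisely there: you say ``Here I would exhibit explicit symplectic picture deformations for the family $G_{k,n}$'' and then offer no mechanism whatsoever for producing them. Since, as you yourself note, this construction is ``the real content of the theorem,'' what you have written is an outline of the ambient machinery, not a proof.

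The paper's actual construction is quite specific and is not hinted at in your proposal. One encodes DJVS arrangements by \emph{braided wiring diagrams with tangencies}, develops a calculus of local moves on such diagrams that provably preserve the boundary braid (splitting and merging multipoints, sliding multipoints through tangency nests, etc.), and then starts from the braided wiring diagram of the \emph{Scott deformation} of the decorated germ---which is concrete and easy to write down---and applies an explicit sequence of these moves to reach a diagram whose number of intersection points equals the number of curve components. The $\Q$HD condition then follows immediately from the elementary formula $b_1=0$, $b_2 = \#(\text{marked points}) - \#(\text{disks})$; there is no need to analyze vanishing cycles or intersection matrices in the page as you suggest. A couple of smaller points: the fillings are not claimed to be simply connected (only $b_1=0$), and the sandwiched presentation in the paper is via the decorated germ $(C,w)$ obtained by blowing down an augmented graph, not via an arrangement in $\cptwo$ whose proper transforms already realize $G_{k,n}$.
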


\begin{figure}[h!]
\def\svgwidth{0,45\columnwidth}
\centering
    \import{./images/}{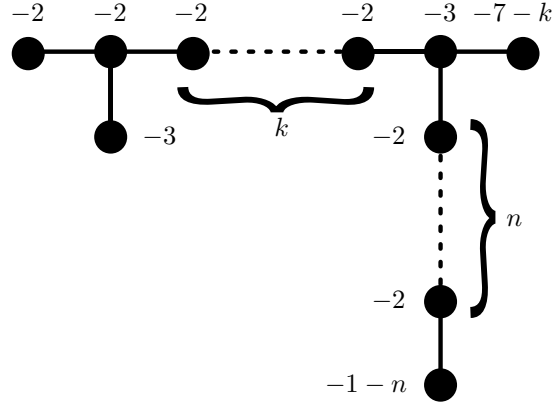}

\caption{The family $G_{k,n}$ for $k\geq -1,n\geq 0$, where $G_{k,n}$ has $k+n+6$ vertices, with $k$ vertices between the two nodes. The graph $G_{-1,1}$ has a single node of valency $4$, with 6 vertices total.}
\label{fig:graphs}
\end{figure}

For $k\geq 0$, the graphs of Figure~\ref{fig:graphs} all have two nodes and belong to the family $\mathcal{C}$ of \cite{SSW}, producing a new symplectic rational blowdown. It is interesting to note that for\footnote{This notation means contracting the edge between the two nodes, and giving them framing $a+b+2$ if the two nodes were framed $a,b$ respectively.} $k=-1$, a rational homology disk smoothing actually exists for a
singularity in the corresponding topological type by \cite[Theorem 5(a)]{fowler2013rational}, when
the cross ratio of the intersection points of the node with the four arms
is $9$. 
Our construction produces a $\Q$HD Stein filling in this case as well, but we do not know whether our filling is diffeomorphic to the Milnor fiber. With this in mind, our construction is an extension of the $\mathcal C^4$ family of \cite{bhupal2011weighted}.

It seems plausible that similar $\Q$HD Stein fillings can be constructed for other families of graphs 
of the $\mathcal{A}$, $\mathcal{B}$, $\mathcal{C}$ types of \cite{SSW} by similar extensions of $\mathcal A^4,\mathcal B^4$ or a different arm of $\mathcal C^4$. Possibly, one can find such fillings for graphs with an arbitrary large number of nodes (our arguments in Section 7 hint at a possible induction process), but we currently lack the methods to show that the corresponding singularities cannot admit any $\mathbb Q$HD deformations. Combinatorial possibilities for many families with  $\Q$HD Stein fillings can be found by a computer search.  However, each new family requires tedious separate analysis, so we only produce one 2-parameter family in this paper. 

These graphs all correspond to {\em sandwiched} singularities (in combinatorial terms, this means that the graph can be augmented, by adding $(-1)$ vertices, to a plumbing graph that can be blown down to a smooth point). Deformation theory for this class of singularities can be understood thanks to a very attractive approach of de Jong--van Straten \cite{djvs}: the surface singularity $(X,0)$ is encoded via the germ of a singular plane curve $(C, 0)$, and
all smoothings of $(X,0)$ correspond to certain {\em picture deformations} of $(C, 0)$, decorated with marked points. Milnor fibers can be reconstructed directly from picture deformations. In particular, a Milnor fiber of a smoothing is a rational homology disk if and only if the number of irreducible components of $(C ,0)$ matches the number of marked points of the corresponding picture deformation.    
The combinatorial features of the arrangement of curves and marked points in a picture deformation depend on the original dual resolution graph. In~\cite{Beke-inprep}, the first author was able to rule out rational homology disk smoothings for certain families of graphs through combinatorial analysis of possible curve and point arrangements. The family  of Figure~\ref{fig:graphs} was discovered in the process: combinatorial obstructions vanish, even though rational homology smoothings do not exist in this case by~\cite[Theorem 8.6]{wahl2011rational}, since the graphs of Figure \ref{fig:graphs} are taut of type $(L_2)-(J_1)-(R_1)$ in the sense of Laufer \cite[Section 2.2]{Lauf}.
For symplectic fillings of the contact link of a sandwiched singularity, the second and third author developed an analog of the de Jong--van Straten theory \cite{PS1, PS2}, showing that all fillings arise in a similar way from certain immersed disk arrangements in $\C^2$ compatible with the germ $(C, 0)$ associated to  the singularity.  The constructions of~\cite{PS2} use spinal open books and nearly Lefschetz fibrations; the symplectic input comes from \cite{BaHa,LVHMW, LVHMW2,  HRW}.

Using the setting of~\cite{PS2}, we will describe the unexpected $\Q$HD fillings by means of these immersed disk arrangements. In turn,
the arrangements are encoded via {\em braided wiring diagrams with tangencies};  braided wiring diagrams were first introduced in \cite{ARVOLA} as a tool to study complex line arrangements. To get a filling of the given link, we need to fix the boundary braid of the arrangement, up to isotopy.  Inspired by \cite{CS},  we develop  diagrammatic moves that preserve the boundary monodromy, and then construct a new family of fillings from standard smoothings via a series of moves. As our examples demonstrate, this gives a useful strategy to create interesting 4-manifolds in certain situations.  These tools may have further applications in the study of complex curves and surfaces in symplectic 4-manifolds.

From the de Jong--van Straten correspondence and non-existence of the corresponding $\Q$HD smoothings, it follows that the combinatorial arrangements producing our $\Q$HD fillings cannot be realized by  analytic deformations of the corresponding curve germs. We do not know how to prove this deformation non-realizability by a direct argument for analytic deformation of plane curves; it would be interesting to investigate this phenomenon further.   

Given the significance of rational blowdown in 4-manifold topology, a natural question is whether our new rational blowdown can be used to construct any interesting exotica. So far we have not been able to produce anything novel. However, we can, for example, embed the corresponding symplectic plumbing into an elliptic fibration and use standard techniques to obtain some basic examples of exotic manifolds.

We have tried to make the paper reasonably self-contained by outlining the de Jong--van Straten construction as well as the symplectic analog of \cite{PS2}  in Section~2. Section 3 describes  braided wiring diagrams with tangencies, Section 4 gives a number of boundary-preserving moves on such diagrams. 
In Section 5, we use these moves to construct the arrangements that yield unexpected Stein fillings for graphs $G_{k, 1}$. We treat this case first for expository reasons: the diagrams are smaller and easier to follow, and we explain all steps in detail. The general 2-parameter family is relegated to Section 7 for the persistent reader; we give the diagrammatic moves with brief explanations. In Section~6, we give a basic example of our rational blowdown producing some 4-manifold exotica.

\vspace{.2cm}

{\bf Acknowledgements.} This work was initiated by the first two authors during the conference ``New structures in low-dimensional topology'' in Budapest, July 2024. We are grateful to Andr\'as N\'emethi, Andr\'as Stipsicz, and Luya Wang for helpful conversations. 
MB is partially supported by the Doctoral Excellence Fellowship Programme (DCEP) funded by the National Research Development and Innovation Fund of the Ministry of Culture and Innovation and the Budapest University of Technology and Economics and by ERC Advanced Grant KnotSurf4d. OP has been partially supported by the NSF grant DMS 2304080.
LS has been partially supported by NSF CAREER grant DMS 2042345 and Sloan grant FG 2021-16254.

\section{Picture deformations, DJVS arrangements, and $4$-manifold constructions}\label{s:djvs}

We briefly describe de Jong--van Straten's deformation theory for sandwiched singularities. We will use de Jong--van Straten's picture deformations to construct a particular Milnor fiber for the link $(Y_{k,n}, \xi_{k,n})$, which we then modify to obtain unexpected fillings.  A picture deformation gives an arrangement of algebraic curves; for the modification, we use {\em DJVS immersed disk arrangements} 
introduced in \cite{PS2}.

A normal surface singularity $(X, 0)$ is {\em sandwiched} if there exists an embedding of the tubular neighborhood of the exceptional set of its resolution $\tilde{X}$ into some blowup of $\C^2$. One can consider arbitrary resolutions here; for simplicity, we always work with the minimal good resolution, where the exceptional curves are smooth and intersect at transverse double points only. (Note that the minimal resolution is good 
for all rational singularities.) A good resolution can be encoded by the dual resolution graph, whose vertices correspond to the irreducible components of the exceptional divisor, and the edges record intersections between the components.
Sandwiched singularities can be defined by a combinatorial condition on the dual resolution graph: a graph is {\em sandwiched} if it can be augmented  by adding new end vertices with 
self-intersection $(-1)$  in a way that the resulting augmented graph can be blown down to a smooth point at $0$.  (The choice of such augmentation is generally not unique.)
The $(-1)$ vertices correspond to a distinguished collection of $(-1)$ curves in a blowup of $\C^2$ with the embedding of $\ti{X}$,  so that the configuration of these $(-1)$ curves 
together with the original exceptional set can be completely blown down.

In \cite{djvs}, the sandwiched singularity is then encoded by an associated singular plane curve germ, as follows. For each distinguished 
$(-1)$ curve, fix  a transverse complex disk $\ti{C_i}$ through a generic point. Blow down the configuration of the exceptional curves of $(X, 0)$ together with these $(-1)$ curves; the image $C_i$ of the disk $\ti{C_i}$ under the 
blowdown is (a germ of) a curve in $\C^2$, possibly singular at the origin. We set   $C = \cup C_i$ and decorate $C$ with a weight $w$, a collection of integers $w_i=w(C_i)$.   For each $C_i$,  the number $w_i$ is the sum of multiplicities of the intersections 
of the image of $\ti{C_i}$ with the exceptional curves  during the blowdown process. (If  $C_i$ is smooth, then  $w_i$ is simply the number of blowdowns that the corresponding component goes through.)
The original singularity can be reconstructed from the decorated germ $(C, w)$, where $C = \cup C_i$ and $w$ is collection of weights $w_i$: the corresponding blowups recover the embedding of the tubular neighborhood of the exceptional set of the resolution given by $G$ into some blowup of $\C^2$.

 \begin{figure}[h!]
 \def\svgwidth{\columnwidth}

\centering
    \import{./images/}{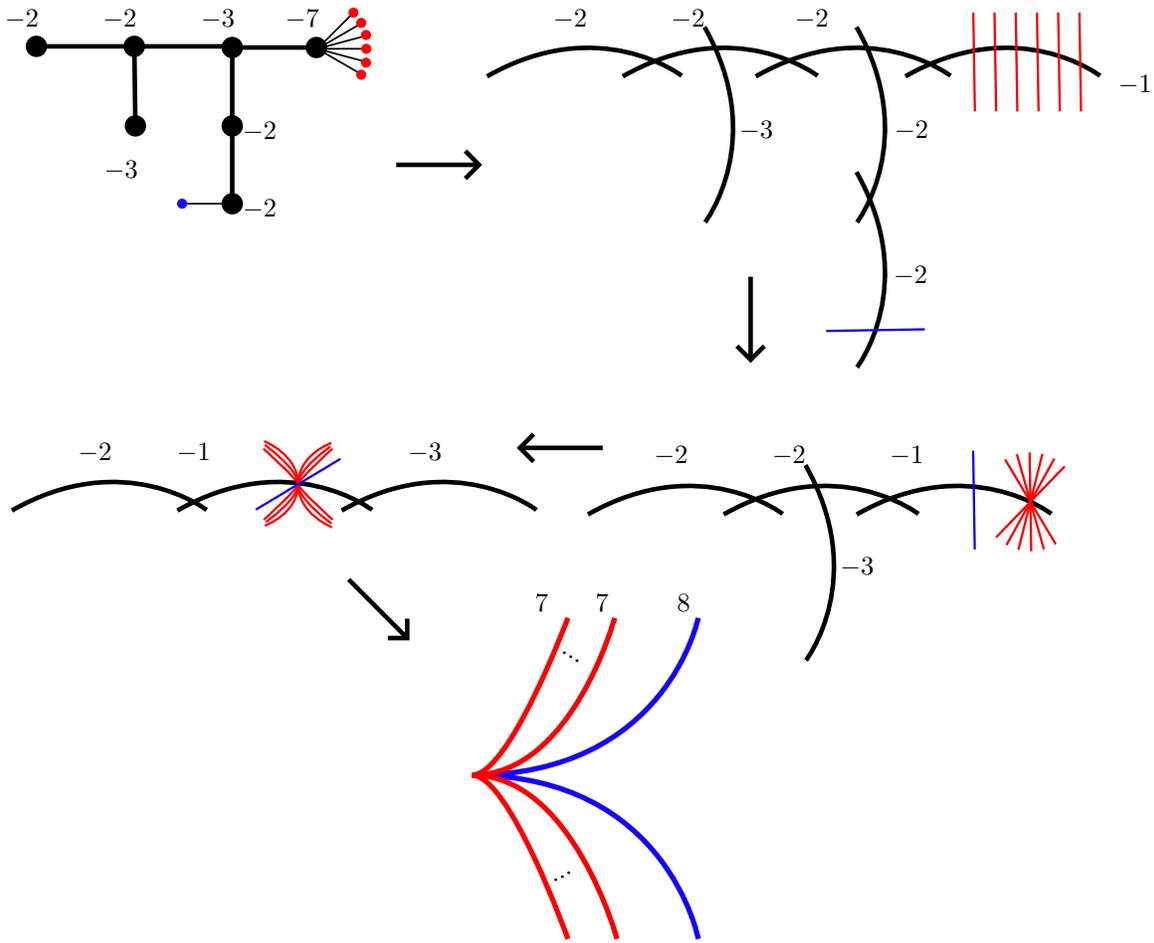}

\caption{The decorated germ for the graph $G_{0,1}$. Each component $C_i$ of the germ has a cusp singularity modeled on  $y^2=x^3$. The intersection multiplicities are $C_0 \cdot C_i=7$ for $i=1, \dots, 6$, and $C_i\cdot C_j = 8$ for $i, j=1, \dots, 6$.
 The weights are as indicated, $w(C_0)=8$, $w(C_i)=7$  for $i=1, \dots, 6$.}
\label{fig:djvs}
\end{figure}

Figure~\ref{fig:djvs}  shows the augmented graph, the blowdown process, and the resulting 
decorated germ for the graph $G_{0,1}$ of Figure~\ref{fig:graphs}.  Each component $C_i$ of the germ is modeled
on the cusp $y^2=x^3$ because the previous steps of the blowdown give a familiar embedded resolution of the cuspidal cubic.    We also see that 
components $C_i$ and $C_j$ have intersection multiplicity $8$ when $i, j=1, \dots, 6$, 
and $C_0$ and $C_i$ have multiplicity of intersection $7$ for each $i=1, \dots, 6$. In general, for the graphs $G_{k,n}$ with $k\geq0$ we get cusps $C_1,...,C_{6+k}$ with decoration $7+k$ and pairwise intersection 
multiplicity  $8+k$ for each pair, and 
instead of $C_0$ we have $n$ cusps $C'_1, \dots C'_n$ 
 with decoration $7+k+n$,  with $C'_i\cdot C'_j = 8+k +n$ for $1\leq i <j\leq n$ and    
 $C'_j \cdot C_i = 7+k$ for $i = 1, \dots, k+6$, $j=1, \dots, n$.

\begin{remark} Strictly speaking, our pictures encode topological rather than analytic data. If we consider the exceptional set of the resolution $\tilde{X}$ as an embedded configuration of complex curves, then, once the augmented graph is fixed, $C$ is a germ of a reducible complex curve in $\C^2$ (defined up to analytic equivalence). However, if we only fix the resolution graph and its augmentation, the output of the blowdown process is {\em the topological type} of a germ $C$, that is, the link $C \cap S^3$ of the plane curve singularity $(C, 0)$ obtained by intersecting the curve $C$ with a small sphere centered at the origin. (This is a link in $S^3$ in the knot-theoretic sense, defined up to smooth isotopy.) By~\cite{CPP}, the contact link $(Y, \xi)$ of the singularity $(X, 0)$ depends only on the topological type; to generate symplectic fillings, we will only need to know the isotopy class of 
$C \cap S^3$. 
\end{remark}

De Jong--van Straten's theory describes Milnor fibers of smoothings  of a sandwiched surface singularity $(X, 0)$ via {\em picture deformations} of the decorated  germ.

A {\em picture deformation} of a decorated germ   $(C, w)$ as above is a 1-parameter small analytic deformation $C^t$ equipped with a collection $p$ of marked points (for each $t$) such that

(C-1)    $C^t$ is a $\delta$-constant deformation,

(C-2)   the only singularities of the curve $C^t$ are transverse multiple points for all $t>0$,

(C-3) the marked points $p_j$ are chosen on $C^t$, so that each  self-intersection and each intersection between two irredicible components is marked; additionally, there may be free marked points on each
$C_i$ away from the intersections,

{(C-4) each component $C_i^t$  carries $w_i$ marked points,  
counted with multiplicity of intersection.}

In \cite{djvs},  $p$ is the deformation of the corresponding multiplicity scheme, originally concentrated at $0$ and reduced for $t>0$, but we will think of the marked points simply as a set. 

The $\delta$-constant property implies that the deformation preserves the branches, and each $C^t_i$ is an immersed disk in a Milnor ball $B$, \cite{Tes}. With the appropriate choice of coordinates, we can assume that none of the tangent cones of the branches $C_i$ of the decorated germ are vertical and, moreover,  all vertical tangencies of the projections $C^t_i \to \C_x$ are nondegenerate. This means that outside of self-intersections, each projection is a branched covering with simple branch points only; the degree of the covering is the multiplicity of the corresponding branch $C_i$ at $0$. We fix a good representative of the deformation in a Milnor ball, thought of as the product $B=D_x \times D_y$ of two coordinate disks, with corners smoothed.    All the constructions will take place inside this ball. 
We will also assume that $C \cap \d B$ is contained in $\partial D_x \times D_y$, and think of the link of $C$ as a braid.  

In~\cite{PS2}, the second and third authors introduced immersed disk arrangements that are more general than picture deformations but have similar topological properties. Let 
$(\Gamma, p)$ be an arrangement of immersed smooth disks  
$\Gamma =\cup \Gamma_i$ in $D_x \times D_y$, {with  $\Gamma_i= n_i(D)$ for an immersion 
$n_i: D \to D_x \times D_y$. Let $p=\{p_j\} \subset \Gamma$ be a finite collection of marked points.}

 We say that $(\Gamma, p)$ is 
a {\em DJVS immersed disk arrangement} if

($\Gamma$-1) $\pi_x \circ n_i: D\to D_x$  is a simple branched covering for each $i$;

($\Gamma$-2) All intersections $\Gamma_i \cap \Gamma_k$ and self-intersections of 
the components $\Gamma_i$ are positive transverse multiple points locally modeled on the intersection of complex lines.

($\Gamma$-3) All intersections and self-intersections are marked, and there can be additional free marked points.

($\Gamma$-4) Each disk $\Gamma_i$ intersects the boundary of $D_x \times D_y$ transversely, $\Gamma_i\cap \partial(D_x \times D_y) \subset \partial D_x \times \Int D_y$, all marked points and branch points are contained in the interior of $D_x \times D_y$ and their projections to $D_x$ are distinct.

The {\em weight} $w(\Gamma_i)$ of the component $\Gamma_i$ is the number 
of the marked points  on $\Gamma_i$, again counted with multiplicity of intersection. (For example, if $p_j$ marks a double point of $\Gamma_j$, it is counted twice.)
We will always assume that $\Gamma_i$ is oriented compatibly with the orientation of $D_x \subset \C$.

A DJVS arrangement $(\Gamma, p)$ is {\em compatible} with the decorated germ $(C, w)$ if 
the braids $\d \Gamma$ and $\d C$ are isotopic in $\partial D_x \times \Int D_y$, and $w(\Gamma_i)= w(C_i)$
for the corresponding components. Equivalently, we can assume that  $\d \Gamma_i= \d C_i$, by using braid isotopy to modify the arrangement $\Gamma$ in the collar neighborhood of the boundary (this modification preserves all the topological data used in Theorem~\ref{thm:ps}).

Given a DJVS arrangement $(\Gamma, p)$, we construct a smooth $4$-manifold with boundary $W_{(\Gamma, p)}$: blow up the ball $B$ at all the marked points $p_j$ and take the complement of the tubular neighborhoods of the strict transforms $\ti{\Gamma}_i$ 
in $B \#_n \cptwobar$, 
\begin{equation} \label{eq:DJVSfilling}
W_{(\Gamma,p)} = [(D_x \times D_y)  \#_n \cptwobar] \setminus \cup_i \nu(\tilde{\Gamma}_i).
\end{equation}
By hypothesis ($\Gamma$-2), the strict transforms can be taken as in the standard complex model, and 
after the blowup,  $\ti{\Gamma}_i$ are disjoint smoothly embedded curves. We say that $W_{(\Gamma,p)}$ is obtained from $(\Gamma, p)$ by the DJVS construction. 
Our work builds on the following key results.

\begin{theorem} \label{thm:djvs} \rm{(\cite{djvs})} Every Milnor fiber of a deformation of a sandwiched singularity $(X, 0)$ can be obtained by the DJVS construction from some picture deformation of a fixed decorated germ $(C, w)$ for  
$(X, 0)$. 
\end{theorem}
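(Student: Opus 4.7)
The plan is to reconstruct the picture deformation directly from the total space of a smoothing, using the sandwich embedding as the bridge. Let $\pi\colon \mathcal{X} \to \Delta$ be a smoothing with central fiber $(X, 0)$ and Milnor fiber $M = \pi^{-1}(t)$ for small $t>0$. By hypothesis there is a proper birational map $\tilde{X} \to B$ to a blow-up of a ball $B \subset \C^2$, factoring through the augmenting $(-1)$-curves followed by successive blow-downs. Fixing generic transverse disks $\tilde C_i$ through each distinguished $(-1)$-curve, their images under this blow-down recover the decorated germ $(C, w)$ with $C = \cup C_i$.

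First I would extend the sandwich structure across the deformation. The normal bundles of the augmenting $(-1)$-curves in $\tilde{X}$ have vanishing obstruction, so after base-changing $\Delta$ if necessary and resolving the total space $\mathcal{X}$ simultaneously, these curves deform to a family inside $\tilde{\mathcal{X}}$. Contracting the family yields an embedding of the deformed resolution neighborhood into $B \times \Delta$. The images of the transverse disks $\tilde{C}_i$ under the fiberwise blow-downs trace out a one-parameter family of plane curves $C^t \subset B$, giving the candidate picture deformation. The marked points $p_j(t)$ arise as the intersections of the moving $(-1)$-curves with the $C^t_i$, together with any marked free points placed to record intersections that have deformed away from the origin.

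Next I would verify the four conditions (C-1)--(C-4). The $\delta$-invariant of each branch is preserved because the blow-down process is topologically equisingular along $t$, yielding (C-1). For (C-2), a generic smoothing lands in the codimension-zero stratum of the $\delta$-constant deformation space, where the only singularities of $C^t$ are ordinary transverse multiple points. Conditions (C-3) and (C-4) follow by direct bookkeeping: at each step of the blow-down, every intersection of the deformed $\tilde C^t_i$ with an exceptional divisor contributes to the weight of $C_i^t$, and the total count matches $w_i$ by construction.

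Finally I would identify $M$ with $W_{(C^t, p)}$. Performing the DJVS construction on $(C^t, p)$ blows up the marked points and removes tubular neighborhoods of the strict transforms $\tilde\Gamma_i$; this is essentially the reverse of the fiberwise blow-down that produced $C^t$ from the deformed resolution. The blow-ups at marked points recreate the chain of exceptional curves over each intersection, and excising $\nu(\tilde\Gamma_i)$ removes precisely the normal neighborhoods of the transverse disks. What remains is diffeomorphic to the deformed resolution minus the transverse disks, which is the Milnor fiber $M$.

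The main obstacle I expect is the rigorous control of (C-2): after extending the sandwich structure in families, the deformed curves $C^t$ may a priori carry non-generic singularities beyond ordinary multiple points. One must argue, via a further small perturbation inside the equinormalizable $\delta$-constant stratum of the versal deformation of $(C, 0)$, that all singularities become transverse double (or multiple) points without changing the diffeomorphism type of $W_{(C^t,p)}$. A related subtlety, and the reason the theorem is genuinely about the \emph{decorated} germ rather than just $(X,0)$, is showing that every picture deformation in the sense of (C-1)--(C-4) -- not just those produced from a global smoothing -- actually lifts to a smoothing of $(X,0)$; this is handled in \cite{djvs} via an identification of the two deformation functors.
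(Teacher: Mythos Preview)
The paper does not prove this theorem: it is stated with the citation \cite{djvs} and no proof is given, as the result is quoted directly from de Jong--van Straten. There is therefore nothing in the paper to compare your proposal against.

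That said, your sketch does capture the broad strategy of the original argument: extend the sandwich embedding (i.e., the augmenting $(-1)$-curves) across the family, blow down fiberwise to produce a one-parameter family of plane curves, and track the marked points through the process. You correctly identify the delicate point as well---controlling the singularities of $C^t$ so that only ordinary multiple points appear---and you are right that the full statement in \cite{djvs} is really an equivalence of deformation functors, which is considerably more work than the topological identification of a single Milnor fiber. One caution: your description of the marked points is a bit loose. The scheme $p$ in \cite{djvs} is not assembled ad hoc from intersections of moving $(-1)$-curves with the $C^t_i$; rather it is the flat deformation of the scheme structure concentrated at $0$ with multiplicity $w_i$ on each branch, and its reducedness for $t>0$ is part of what must be proven. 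Similarly, the claim that the $(-1)$-curves ``deform to a family inside $\tilde{\mathcal X}$'' because their normal bundles have vanishing obstruction needs the deformation-theoretic input that is the heart of \cite{djvs}; it is not automatic from local rigidity alone. So while your outline is in the right spirit, the steps you flag as routine are where the actual content lies.
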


\begin{theorem} \label{thm:ps} \rm{(\cite{PS2})} (1) For every DJVS arrangement $(\Gamma, p)$ compatible with $(X, 0)$, the 4-manifold $W_{(\Gamma, p)}$ carries a Stein structure and gives a Stein filling of the 
contact link $(Y, \xi)$ of $(X, 0)$.

\noindent (2) Every minimal symplectic filling of the contact link $(Y, \xi)$ can be obtained by the DJVS construction from some DJVS arrangement compatible with $(C, w)$.
\end{theorem}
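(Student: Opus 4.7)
The plan is to identify a single intermediate structure---a nearly Lefschetz fibration over $D_x$ compatible with a spinal open book on $(Y,\xi)$ read off from the boundary braid $\partial C$---that simultaneously witnesses Stein fillings of the link and symplectic fillings, and to travel in both directions between such fibrations and DJVS arrangements.

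For part (1), I would start from a DJVS arrangement $(\Gamma,p)$ compatible with $(C,w)$ and analyze $\pi_x:D_x\times D_y\to D_x$ restricted to $W_{(\Gamma,p)}$. Axiom ($\Gamma$-1) makes $\pi_x|_{\Gamma_i}$ a simple branched cover, so all vertical tangencies are nondegenerate; axioms ($\Gamma$-2)--($\Gamma$-4) ensure that after blowing up the marked points and deleting a tubular neighborhood of the strict transforms $\tilde{\Gamma}_i$, the projection descends to a nearly Lefschetz fibration on $W_{(\Gamma,p)}$. The induced structure on $\partial W_{(\Gamma,p)}$ is a spinal open book whose vertebrae are punctured copies of $D_y$, whose binding is $\partial\Gamma$, and whose monodromy is determined by $\partial\Gamma=\partial C$ together with the branch and marked-point data in the interior. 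Compatibility identifies this spinal open book with the canonical one for $(Y,\xi)$ associated to $(C,w)$. Then apply the Stein construction for nearly Lefschetz fibrations supporting a spinal open book from \cite{LVHMW, LVHMW2, HRW} to endow $W_{(\Gamma,p)}$ with a Stein structure filling $(Y,\xi)$; the local complex model of ($\Gamma$-2) ensures that the vanishing cycles over Lefschetz critical values are allowable.

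For part (2), take a minimal symplectic filling $(W,\omega)$ of $(Y,\xi)$. The filling-to-fibration theorems of \cite{BaHa, LVHMW, LVHMW2} assert that any symplectic filling of a contact manifold supported by a spinal open book admits, after symplectic deformation, a nearly Lefschetz fibration extending the boundary spinal open book. Apply this to the canonical spinal open book from $(C,w)$ to obtain a nearly Lefschetz fibration $W\to D_x$. Reconstruct the components $\Gamma_i$ by capping off the binding components with the horizontal components of the fibration; these extend to immersed disks that project as branched covers. The vertical tangencies, Lefschetz critical values, and multiple-point data become the marked points $p_j$. Blowing down the exceptional spheres sitting over the marked points embeds the $\Gamma_i$ into $D_x\times D_y$ as an immersed arrangement. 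Axioms ($\Gamma$-1)--($\Gamma$-4) are verified locally; compatibility with $(C,w)$ follows from the boundary identification, and the equality $w(\Gamma_i)=w(C_i)$ is a homological count of exceptional spheres traversed by each disk, matched against the corresponding count in the germ.

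The hard part is part (2): upgrading a minimal symplectic filling to a nearly Lefschetz fibration with prescribed boundary spinal open book requires the full pseudoholomorphic foliation machinery of \cite{BaHa, LVHMW2}, and one must control bubbling and boundary degenerations tightly enough that critical values and multiple-point configurations fit the combinatorics permitted by the DJVS axioms. Minimality of $(W,\omega)$ is used precisely to rule out stray exceptional spheres not arising from marked points, ensuring the extracted arrangement is genuine and that the blow-down step recovers an immersed arrangement living in the standard Milnor ball $D_x\times D_y$.
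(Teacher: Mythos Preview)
This theorem is not proved in the present paper; it is quoted from \cite{PS2}, and the paper explicitly notes that only the easier part (1) is used here. There is therefore no in-paper proof to compare against.

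That said, your outline is consistent with the strategy the authors attribute to \cite{PS2}: the introduction states that the constructions there ``use spinal open books and nearly Lefschetz fibrations; the symplectic input comes from \cite{LVHMW, LVHMW2, BaHa, HRW}.'' Your sketch of part (1)---projecting $W_{(\Gamma,p)}$ via $\pi_x$ to obtain a nearly Lefschetz fibration with planar fibers, identifying the induced spinal open book on the boundary with the canonical one for $(Y,\xi)$ determined by $(C,w)$, and invoking the Stein filling criterion---is the right architecture. Likewise for part (2), appealing to the pseudoholomorphic foliation machinery of \cite{BaHa, LVHMW2} to produce a compatible fibration on an arbitrary minimal filling and then reverse-engineering the arrangement is the intended route. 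Your identification of the role of minimality (ruling out extraneous exceptional spheres) is also on target. The main caution is that your sketch is high-level: the actual work in \cite{PS2} involves substantial care in matching the spinal open book on $\partial W_{(\Gamma,p)}$ to the one canonically associated to the sandwiched singularity, and in part (2) controlling the topology of the reconstructed disks (e.g.\ why they are immersed disks rather than higher-genus surfaces, why intersections are positive and transverse). These are genuine technical steps, not formalities, but your plan names the correct ingredients.
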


We will only use the easier part (1) of Theorem~\ref{thm:ps} in this paper: the unexpected $\Q$HD fillings will be constructed from appropriate DJVS arrangements.

Homological invariants of $W_{(\Gamma, p)}$ can be easily computed from the combinatorics of the arrangement: 
\begin{equation}\label{eq:lines-points}
b_1=0,    \qquad b_2= \# \text{(marked points)}- \#\text{(disks)}.
\end{equation}
{Indeed, it is not hard to see that $H_1(W_{\Gamma, p})$ is generated by loops around $\tilde \Gamma_i$’s, $H_2(W)$ can be identified with the kernel of the incidence map
$\mathcal{I}: \Z\langle{p_j}\rangle \to \Z\langle{\Gamma_i}\rangle$, and $H_1(W)$ is the cokernel of this map. We have $b_1=0$ because $W$ consists of 1- and 2-handles only, and  $b_1(Y)=0$ for the link $Y=\d W$ since the singularity is rational,
c.f. \cite[Theorem 5.2]{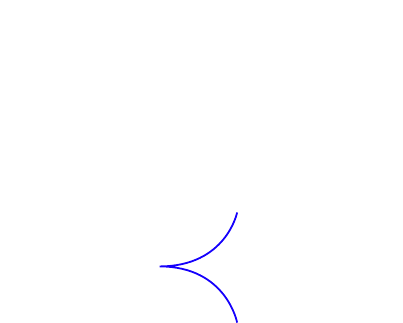}, \cite[Section 6.1]{PS1}.}
Thus, to produce a $\Q$HD filling, the arrangement $(\Gamma, p)$ must have the number of marked points matching the number of components $\Gamma_i$; this is rare, since typically an arrangement $\Gamma$ has a much larger number of marked points than components.  

To illustrate the discussion, we describe the Milnor fiber of one of the smoothings of the singularities in our family. Consider the germ $C$ of Figure~\ref{fig:djvs} and take its Scott 
deformation, constructed iteratively as follows. We blow up at the singular point at the origin,  perform a small deformation of the strict transform  so that its singularities become disjoint from the exceptional curve $E$, and the branches of the strict transforms are transverse to $E$  (equivalently, we can think of this as ``shifting'' the exceptional curve $E$ off the singularity in the strict transform), and then blow down $E$ to get a deformed curve $C'$ in $\C^2$. 
As a result, the curve $C'$ has a new transverse multiple point as well as  the collection of singularities occurring on the strict transform of $C$ in the blowup of $\C^2$ at $p$. The procedure is repeated until all the singularities at the given stage are transverse multiple points.
See \cite{djvs, ACampo} for details, including the explanation why this procedure can be actually realized by a 1-parameter deformation. Figure~\ref{fig:artin} illustrates the Scott deformation of our decorated germ from Figure~\ref{fig:djvs}.  The marked points on the picture deformation include all the intersections as well as the free points to match the weights on the components of the singular germ $C$. (Decorated germs look similar for all singularities corresponding to graphs $G_{0, n}$, with the number of intersection points increasing with $n$,  although the parity of $n$ affects the order of curve ``tails'' on the right side of the picture.)  For an arbitrary decorated germ, the Scott deformation produces the Artin smoothing under the de Jong--van Straten correspondence, with the Milnor fiber that is diffeomorphic to the minimal resolution of the surface singularity \cite{djvs}. 

\begin{figure}[h!]
\def\svgwidth{0.5\columnwidth}
\centering
    \import{./images/}{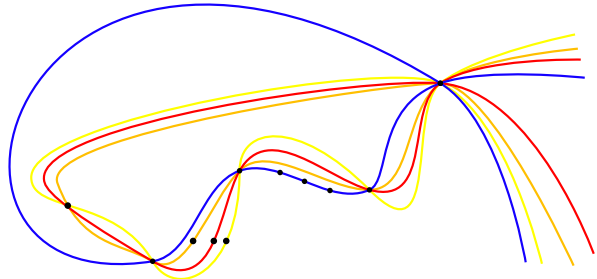}

\caption{The Scott deformation of the decorated germ of Figure~\ref{fig:djvs}. The blue curve corresponds to $C_0$ in the decorated germ. Yellow, orange and red curves show deformations of three of the six curves $C_1, \dots, C_6$; each of the other three curves, not pictured, goes through the same intersection points and have one free marked point.}
\label{fig:artin}
\end{figure}

Figure~\ref{fig:artin} shows familar nodal cubics in the real plane, but it is not very useful when one wants to understand the topology of the arrangement in $\C^2$. In the next section, we will use braided wiring diagrams with tangencies to encode the arrangements.

The isotopy type of the boundary braid of a given germ (and therefore of any compatible arrangement) can be easily read off from the Puiseux coefficients of the plane curve singularity, \cite{BrKnor}.  For the germ corresponding to the graph $G_{0,1}$ of Figure~\ref{fig:graphs}, each component of the boundary braid is a trefoil knot, with the linking number of the knots equal to the intersection multiplicity of the corresponding components $C_i$ of $C$. The topological type of the individual components and the pairwise linking numbers uniquely determine the topological type of an algebraic link, see \cite{BrKnor}.  It is also easy to see, by direct inspection of the arrangement, that the Scott deformation of the curve will have the same boundary braid (as it should).  However, when we try to construct other disk arrangements with the same boundary braid, it is not easy to manipulate the combinatorics of the arrangement and keep track of its boundary braid directly. In the next two sections, we will develop a diagrammatic approach 
that will help us describe the curve arrangements and  modify them while controlling the boundary braid.

\section{Braided wiring diagrams with tangencies} \label{s:diagrams} 

Braided wiring diagrams were introduced by Arvola~\cite{ARVOLA} to study complex line arrangements. 
In \cite{PS2}, we adapted these diagrams to encode DJVS arrangements.   Away from the $x$-values of transverse (self)-intersections,  the projection of a  DJVS arrangement to $D_x$ is a branched covering with simple branch points. The branch points of the projection correspond to points where a component $\Gamma_i$ is tangent to a fiber of $\pi_x$. We refer to these points as tangencies; tangencies and intersections 
are the singular points of $\pi_x: \Gamma \to D_x$. The braided wiring diagram of a DJVS arrangement records the intersections and the tangencies of the arrangement, and how they connect to one another, possibly with some braiding of the components in between. 

For a DJVS arrangement $\Gamma$, let $q_1,\dots, q_N\in D_x$ be the images of intersection points and tangencies (branch points) of $\Gamma$ under the projection $\pi_x$. Choose an embedded piecewise linear arc  $\alpha:I\to D_x$ such that $\alpha(t_1)=q_1,\dots, \alpha(t_N)=q_N$ for $0<t_1<\cdots <t_N<1$ and $\alpha|_{[t_i-\delta,t_i+\delta]}(t) = (t-t_i)+q_i$ runs parallel to the real axis in $D_x$ for some sufficiently small $\delta>0$. Then $\pi_x^{-1}(\alpha(I))\cong [0,1]\times \C$, and $\Gamma\cap \pi_x^{-1}(\alpha(I))$ is a $1$-dimensional braid, except at singular points above $q_1,\dots, q_N$. This restriction of~$\Gamma$ over the preimage of the arc $\alpha$ is  the {\em braided wiring diagram} for $\Gamma$. (The diagram depends on the choice of $\alpha$.) Given a braided wiring diagram, 
the arrangement $\Gamma$ can be reconstructed, up to isotopy, by connecting the local models for the singular points with the thickened braided portions, see \cite[Section 4]{PS2}. The intersection points are modeled on the intersection of complex lines.  
For tangencies, we fix  a local model as in 
\cite[Section 4.3]{PS2}. This involves a choice of perturbation to avoid highly degenerate wiring diagrams. While a different choice of the local model would encode equivalent data, it is important to fix conventions for consistent interpretation of wiring diagrams. Fix $0<\mu\ll \pi/2$. Our model for a tangency is  the complex curve $\{(x,y)\in \C^2\mid e^{i\mu}x = y^2\}$ with the tangency of $\pi_x$ at the origin.  The preimage of the model over the interval in the real $x$-axis includes the singularity and gives the picture for the braided wiring diagram near a tangency, see \cite[Figure 7]{PS2}. If there are two nearby tangencies with nested branches, both in these specific models, then a direct inspection of the model shows that the strands in the corresponding wiring diagram are braided in the neighborhood of the tangencies, as shown in Figure~\ref{fig:tangency-braiding}.  

\begin{figure}[h]
\centering
\def\svgwidth{0.4\columnwidth}
    \import{./images/}{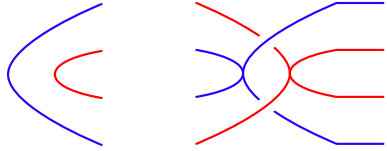}

\caption{For two nearby vertical tangencies on curves with nested real parts as on the left, the wiring diagram has braiding of the strands near the tangencies, as shown. The branches of the same curve are shown by strands of the same color.}
\label{fig:tangency-braiding}
\end{figure}

The boundary braid of $\Gamma$ can be read off from the wiring diagram: 
after constructing $\Gamma$ from the local models for the singularities and the thickened braids, the braid $\d \Gamma$ is the restriction of $\Gamma$ over the preimage of the boundary $\d \nu(\alpha)$  of the tubular neighborhood of the arc $\alpha$. The contribution of each combinatorial element of the diagram (interesection point, tangency, braiding) to the boundary braid can be seen directly from the models. Specifically, when the arc $\alpha$ is a segment of the real line in the $x$-plane, the loop $\d \nu(\alpha)$ is formed by the pushoffs of $\alpha$ into the domains $\Im x<0$  (``front'') and  $\Im x >0$ (``back''). 
The braid monodromy of $\d \Gamma$ over the loop $\d \nu(\alpha)$ (traversed counterclockwise)  is obtained by composing the inverse of the braid given by the preimage of $\Gamma$ over the pushoff of $\alpha$ in the positive imaginary direction  (the back of the diagram) with the braid given by the preimage of $\Gamma$ over the pushoff in the negative imaginary direction (the front of the diagram).  By direct inspection, an intersection point in the diagram contributes a positive half-twist on the corresponding strands both to the front and to the inverse of the back portion. (Due to orientations, the contribution to the back is a negative half-twist.) A tangency, with our choice of the local model, contributes a negative half-twist to the back of the diagram (thus a positive
half-twist to its inverse) and two parallel strands in the front. The braided part looks the same in the back and the front pushoffs (contributing to the total boundary braid as partial conjugation due to the sign reversal on the back). If the braided wiring diagram has a unique singular point, we recover the total boundary monodromy of a full positive twist for an intersection point of all strands, and a half-positive twist on two strands for a tangency; the ``front'' and ``back'' analysis allows us to find the monodromy of a diagram with several singular points and braiding.  See \cite[Section 4.3]{PS2} for more details and pictures.

We will use the notational convention where the braids and the braided wiring diagrams are read left to right, and the strands are labeled top to bottom. These conventions differ from those in~\cite{CS} but seem to be more common in low-dimensional topology, although there are no standard conventions in the literature. 

As a warm-up, we illustrate the above discussion with an example of arrangement that shares some similarities with the arrangements of 
Figure~\ref{fig:artin} but is much simpler.

\begin{example}\label{ex:Scott-warmup} Consider the Scott deformation $C^t$ of a germ $C$  with two irreducible components, each with a simple cusp singularity, intersecting each other with multiplicity $7$. Figure~\ref{fig:Scott-warmup}  shows the real part of the complex curve $C^t$ which is the general fiber of the Scott deformation (left), and the corresponding braided wiring diagram  (right). Note the intertwined strands in the neighborhood of the two tangencies.  
\begin{figure}
	\centering
	\includegraphics[scale=.75]{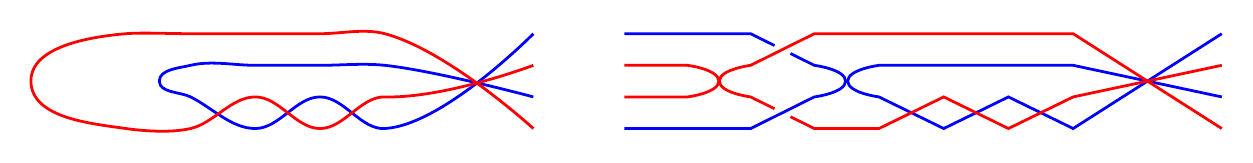}
	\caption{The Scott deformation of the germ  of Example~\ref{ex:Scott-warmup}:
	the real part of the deformed curve (left) and the braided wiring diagram (right). The figure is taken from \cite{PS2}.
	}
	\label{fig:Scott-warmup}
\end{figure}
Constructing the ``back'' and ``front'' pushoff of the wiring diagram as explained above, we can express the boundary braid of the arrangement as [inverse of ``back''][``front''], which gives 
$$
\d C^t = [\Delta_{1,4} \sigma_3^3 \sigma_2 \sigma_3 \sigma_1 \sigma_2] 
[\sigma_1^{-1}\sigma_3^{-1}\sigma_3^3 \Delta_{1,4}],
$$
where $\sigma_i$ stands, as usual, for the positive half-twist between the $i$-th and the $(i+1)$-th strands, and $\Delta_{1,4}$ is the positive half-twist on all strands $1, \dots, 4$, and we used the square brackets to emphasize the contributions from the back and the front of the diagram. The reader can write a simpler expression for the (conjugacy class of) this closed braid and verify that it is consistent with the braid one gets from the Puiseux expansion.
\end{example}

We are now ready to analyze the Scott deformation of the germs corresponding to the family of singularities (topologically) encoded by the graphs $G_{k,n}$.  We ignore the marked points for the moment, since our goal is to understand the boundary braid. The diagrams in Figures~\ref{fig:Scott0}~and~\ref{fig:Scott1} represent the cases $k=-1$ and $k=0$: note that the diagrams are slightly different because the order of the strands on the right of the diagram is affected by parity. These diagrams easily generalize to all $k$ odd resp. $k$ even: increasing $k$ to $k+2$ adds two new germ components (that is, four new strands in the diagram) and two new multipoints involving the ``bottom'' strands of all the components. We hope that the reader is able to see how the patterns in the pictures extend to infinite families.

To be able to write formulas for the diagrams, we introduce notation for certain elements that appear in our examples, see~Figure~\ref{fig:notation}.
The braiding will be written using the standard braid generators, where $\sigma_i$ denotes the positive half-twist between the $i$-th and the $(i+1)$-th strands. (We do not use the word ``crossing'' to avoid confusion with the intersections.) The multipoint intersection of strands $i, i+1, \dots, j$ is denoted $\I^i_{j}$; we only indicate the first and the last strand since the intersection point involves consecutive strands.
For an intersection of only two strands, we also write 
$\I_i=\I^i_{i+1}$.
  We write $\X_{j+1,k}^{i,j}$ for  a configuration of double points where parallel strands  
$i \dots, j$ intersect parallel strands  $j+1, \dots ,k$ in a square grid pattern. (Since the strands are necessarily consecutive,  one of the indices is 
redundant and could be dropped from notation, but we found that the longer notation helps keep track of strands and relations. We will write $\X^{i}_{i+1, k}$ resp. $\X^{i, j}_{j+1}$ when there is 
only one strand is the top or bottom collection.)
The vertical tangency involving the strands $i$  and $(i+1)$ is denoted by
$\T_i$; this notation only makes sense if the two strands belong to the same component of the germ. We encode the diagrams by words in these elements, written from left to right.
 Exponents will mean that the subword is repeated the corresponding number of times; for example,  $(\I^i_{ j})^r$ means that there are $r$ consecutive intersection points involving all strands from $i$ to~$j$. With this notation, the diagram of  Figure~\ref{fig:Scott-warmup} is written as 
 $\T_{2}\sigma_1^{-1}\sigma_3^{-1} \T_{2} (\I_{3})^3 \I^1_{4}$.
\begin{figure}[h]
\centering
    \import{./images/}{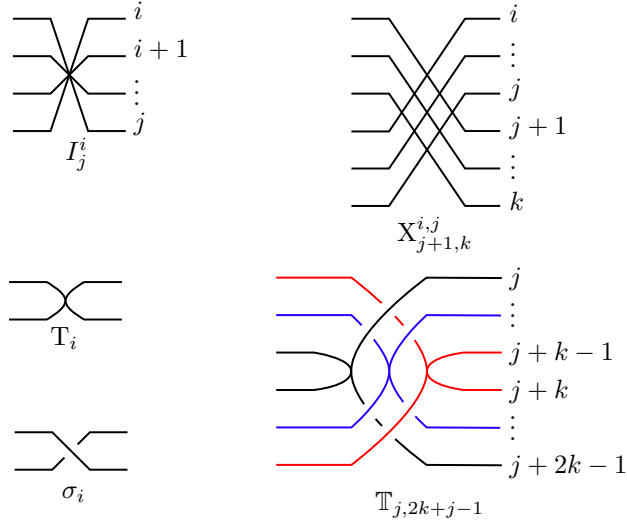}

\caption{Notation for elements of braided wiring diagrams with tangencies.}
\label{fig:notation}
\end{figure}
 In our examples that arise from Scott deformation of germs with cuspidal cubic components, we often see a collection of curves with nearby tangencies, with nested pairs of branches and a particular braiding pattern. We call this pattern a \textit{tangency nest} and denote it $\TN_{j, 2k+j-1}$ in a braided wiring diagram, with $k$ tangencies spanning strands from $j$ to $2k+j-1$, see Figure~\ref{fig:notation}. We drop indices and write $\TN$ for the tangency nest on {\em all} strands. For this pattern on $2n$ strands, we have 
$$
\TN_{1, 2n} =  \T_{n}\, \sigma_{n-1}^{-1}\sigma_{n+1}^{-1}\sigma_{n-2}^{-1}\sigma_{n+2}^{-1}\dots 
 \sigma_1^{-1}\sigma_{2n}^{-1} \, \T_{n} \, \sigma_{n-1}^{-1}\sigma_{n+1}^{-1}\sigma_{n-2}^{-1}\sigma_{n+2}^{-1}\dots \sigma_2^{-1}\sigma_{2n-1}^{-1}  \dots   \T_{n}\, \sigma_{n-1}^{-1}\sigma_{n+1}^{-1} \, \T_{n},
$$
and the diagram of Figure~\ref{fig:Scott-warmup} is $\TN (\I^3_{4})^3 \I^1_{4}$.
 
\begin{prop}\label{prop:scott-pics}  The braided wiring diagram on $2(k+7)$ strands representing the Scott deformation of the decorated germ with $m=k+7$ components associated to the graph $G_{k,1}$ is given by 
\begin{equation*}
\begin{split}
 \TN_{1, 2m} \I^{m+1+\epsilon}_{ 2m-1+\epsilon} \, 
(\I^{m+1}_{ 2m})^{m-4} \, \I^1_{ 2m},
\end{split}
\end{equation*}
where $\epsilon=0$ for $k$ odd, $\epsilon=1$ for $k$ even. For $k=-1,0$, the diagrams are shown in Figures~\ref{fig:Scott0} and~\ref{fig:Scott1}. 
\end{prop}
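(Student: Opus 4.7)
The plan is to directly trace through the Scott deformation procedure applied to the decorated germ of Figure~\ref{fig:djvs}, which has $m = k+7$ cuspidal components, each contributing two strands (its two branches at the origin) to a braided wiring diagram on $2m$ strands. Recall that the Scott deformation is built iteratively: blow up the most singular point, perturb so that the resulting singularities move off the exceptional divisor while the branches cross it transversely, then blow down. The proposition is then verified by checking that this procedure, performed explicitly, produces precisely the word in tangencies, intersection multipoints, and braid generators stated.

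First I would establish the appearance of the tangency nest $\TN_{1, 2m}$. After one blowup at the origin, each $C_i$ has a smooth strict transform meeting the new exceptional divisor $E$ transversely; all these strict transforms share a common tangent direction because the original cusps do. Perturbing and blowing down, each $C_i$ becomes smooth but acquires a vertical tangency near the origin. Because the branches of the different $C_i$ are nested around the origin in the real picture (as in Example~\ref{ex:Scott-warmup} and Figure~\ref{fig:artin}), the tangency points together with the local model of Figure~\ref{fig:tangency-braiding} produce exactly the nested pattern $\TN_{1, 2m}$.

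Next I would analyze the multipoint contributions arising from the pairwise intersection multiplicities $C_i \cdot C_j = 8+k = m+1$. After passing through the tangency nest, the $2m$ strands are reordered so that the ``upper'' branches of each component occupy the top $m$ positions and the ``lower'' branches occupy the bottom $m$ positions. The intersections not already absorbed by the tangency nest must account for the remaining $\binom{m}{2}(m+1)$ pairwise crossings, and a direct count shows these are realized by $m+2$ copies of the multipoint $\I^{m+1}_{2m}$ on all lower strands, one additional partial multipoint $\I^{m+1+\epsilon}_{2m-1+\epsilon}$ omitting one outermost lower strand, and one concluding multipoint $\I^1_{2m}$ that records the outer ``envelope'' intersection of all components along their common cusp direction. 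The parity shift $\epsilon$ records whether the omitted strand in the partial multipoint sits at the top or bottom of the lower block, which is determined by the parity of $m$ (hence of $k$) via the way upper and lower branches interchange through the tangency nest.

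The main obstacle will be confirming the exact placement and count of the multipoints, in particular the $\epsilon$-shift. I would handle this by first verifying the two base cases $k=-1$ ($m=6$, Figure~\ref{fig:Scott0}) and $k=0$ ($m=7$, Figure~\ref{fig:Scott1}) by direct inspection, then arguing by induction $k \to k+2$: adding two new cuspidal components corresponds to adding four new strands, one pair of nested tangencies to $\TN$, and two further copies of the multipoint pattern on the new lower strands, while preserving the value of $\epsilon$. As a consistency check, one can verify that the boundary braid read off from the diagram using the front/back pushoff procedure of Section~\ref{s:diagrams} agrees with the braid predicted by the Puiseux data of the germ as recalled at the end of Section~\ref{s:djvs}.
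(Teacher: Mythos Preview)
Your overall strategy---directly reading the diagram off the Scott deformation picture and organizing the argument as ``tangency nest, then multipoints, then check base cases and induct by $k\to k+2$''---is exactly what the paper does (its proof is a one-line reference to Figure~\ref{fig:artin} and the tangency model). So the approach is fine.

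There is, however, a genuine factual error that would make your multipoint analysis fail. You assert that all pairwise intersection multiplicities are $C_i\cdot C_j = 8+k = m+1$, and you base your count of ``$\binom{m}{2}(m+1)$ pairwise crossings'' on this. That is not what the decorated germ for $G_{k,1}$ looks like. As computed in Section~\ref{s:djvs} (see Figure~\ref{fig:djvs} and the paragraph following it), the germ has one distinguished component $C_0$ (the single ``primed'' curve $C'_1$ when $n=1$) with $C_0\cdot C_i = 7+k = m$ for $i=1,\dots,k+6$, while the remaining components satisfy $C_i\cdot C_j = 8+k = m+1$ among themselves. So the intersection multiplicities are \emph{not} uniform.

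This is precisely the reason for the partial multipoint $\I^{m+1+\epsilon}_{2m-1+\epsilon}$: it is the multipoint on the lower strands from which the lower branch of $C_0$ has been excluded, reflecting that $C_0$ participates in one fewer common intersection than the other components. Your explanation (``omitting one outermost lower strand'' as an artifact of strand reordering) misses this; the omitted strand is specifically the $C_0$ strand, and the parity $\epsilon$ records whether $C_0$'s lower branch lands at position $m+1$ or $2m$ after the tangency nest, which depends on the parity of $m$ (equivalently of $k$). Once you correct the intersection data, the counting and the role of $\epsilon$ will fall into place and the rest of your outline goes through.
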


\begin{figure}[ht]
	\centering
	\includegraphics[scale=.4]{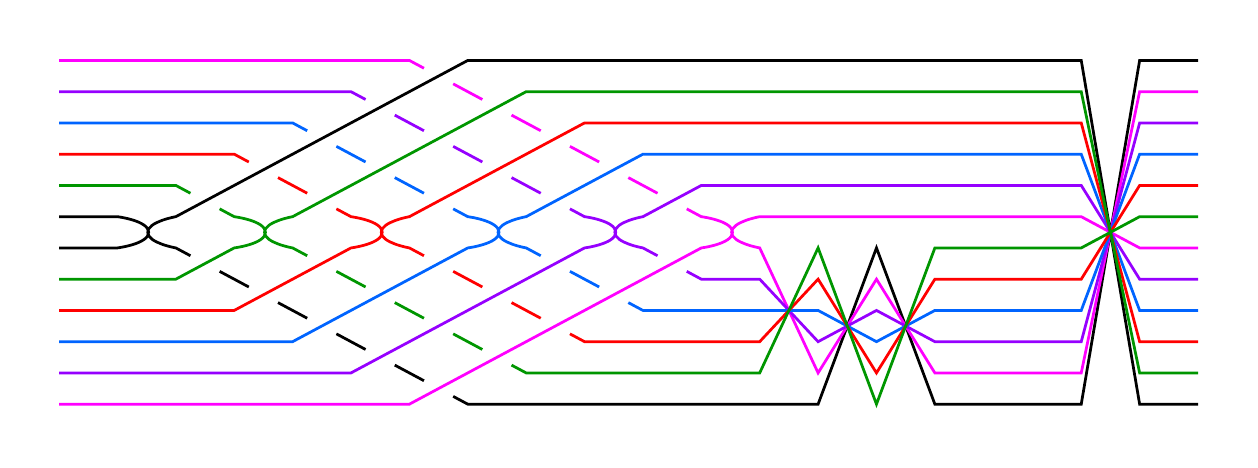}
	\caption{Wiring diagram for the Scott deformation for the germ of the graph $G_{-1,1}$. Here and in subsequent figures, the strands are labeled top to bottom. Strands representing the same irreducible component of the germ are shown in the same color. Diagrams for all odd $k$ follow the same pattern.} 
	\label{fig:Scott0}
\end{figure}

\begin{figure}[ht]
	\centering
	\includegraphics[scale=.3]{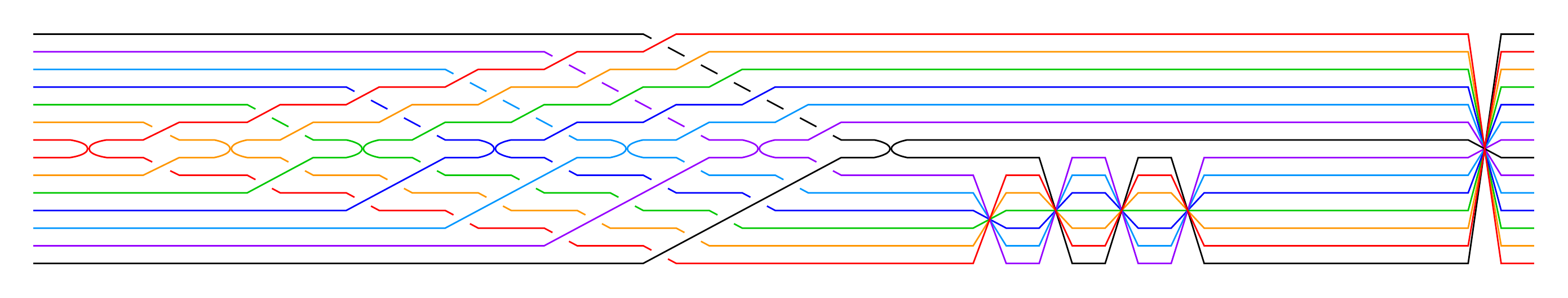}
	\caption{Wiring diagram for the Scott deformation for the germ of the graph $G_{0,1}$.  Diagrams for all even $k$ follow the same pattern.} 
	\label{fig:Scott1}
\end{figure}

\begin{proof} This immediately follows the structure of the Scott deformation for the given germs (see Figure~\ref{fig:artin}), with the careful treatment of the braiding near the tangencies. 
\end{proof}

\section{Boundary-preserving moves on braided wiring diagrams} 

To prove Theorem~\ref{thm:main}, we need to find DJVS arrangements that are compatible with the corresponding decorated germs and have the number of marked points equal to the number of disks, see~\eqref{eq:lines-points}. 
If one tries to manipulate the combinatorics of the arrangement directly by drawing pictures, it is difficult to control the boundary monodromy that has to remain fixed for compatibility. 
We will use a sequence of simple moves, each giving a useful modification of the diagram while preserving the boundary braid of the arrangement. (The marked points will be ignored during the moves; we only add them at the final stage. All intersection points need to be marked, so the number of intersections must not exceed the given weights, but there can be free marked points if the weights are higher).

For complex line arrangements,  moves on braided wiring diagrams that preserve the monodromy function 
on $\pi_1 (\C_x \setminus \text{singular values})$ were studied by Cohen--Suciu~\cite{CS}. Some of their moves 
are Hurwitz-type modifications relating wiring diagrams  by different choices of
the path $\alpha$ for the same arrangement, although this is not explicit in~\cite{CS}. We will use some of the Cohen--Suciu moves, 
but generally our moves are different in nature: they can change the combinatorics of the arrangement, fixing only the boundary braid. 
	If two braided wiring diagrams have the same boundary braid we will denote this relation by $=_\d$.
Below we describe the moves we use and verify that each move preserves the boundary braid, up to isotopy. (We will only focus on the boundary braid property that we need; stronger properties, such as Hurwitz-type, may hold for some of the moves, but we will not make or verify any stronger claims.) 



\subsection{Commuting elements, expanding $\X$ and $\TN$} Clearly, elements $\I$, $\X$, $\T$, $\sigma$ on disjoint subsets of strands commute 
{since the corresponding arrangements are obviously isotopic through a one-parameter family of DJVS arrangements, with the boundary braid in the same isotopy class.}
In particular,    
\begin{equation}\label{eq:commute}
 \I^i_{j} \I^k_l = \I^k_{l} \I^i_{j}, \qquad \I^i_j \T_k = \T_k \I^i_j, \qquad  \I^i_j \sigma_k = \sigma_k \I^i_j,
\end{equation}
if $i<j<k<l$.
We can draw commuting elements in the diagram without specifying their order. 

A trivial notational relation expands an element $\X^{i, j}_{j+1, l}$ by separating strands:
\begin{equation}\label{eq:expandX}
 \X^{i, j}_{j+1, l} = \X^{i', j}_{j+1, l} \X^{i, i'-1}_{i', i'+l -j-1} = 
 \X^{i, j}_{j+1, l'} \X^{i + l'-j, i +l'}_{l'+1, l}.
\end{equation}
For example, $\X^{2, m}_{m+1, 2m} = \X^{3, m}_{m+1, 2m} \X^2_{3, m+2}$ means that the configuration of parallel strands $2, \dots m$  intersecting parallel strands $m+1, \dots, 2m$ is the same as 
strands $3, \dots, m$ intersecting $m+1, \dots, 2m$ followed by strand $2$ intersecting the bottom $m$ strands (which have indices $3, \dots, m+2$ after crossing the previous strands).  

Similarly, we can expand a tangency nest: 
\begin{equation}\label{eq:expandTN}
\TN_{1,2n}= \TN_{2,2n-1}\sigma_{2n-1}^{-1}...\sigma_{n+1}^{-1}\sigma_1^{-1}...\sigma_{n-1}^{-1}\T_n.
\end{equation}

\subsection{Splitting/merging multipoints.} A multipoint involving $n$ consecutive strands in the diagram can split in a number of ways. It can split generically into $n\choose 2$ double points, or there can be several multipoints corresponding to the intersection of some subsets of the original set of strands, with other strands intersecting at double points. The inverse move merges two or more multipoints into one. See Figure~\ref{fig:split-multipts}.
	Note that no braiding is introduced between the intersections when splitting a multipoint. One can see this using a model given by complexified real lines, or by directly working with braid relations for the front and back braid pushoffs. Correspondingly, when merging multipoints, it is critical that there is no braiding in between the multipoints which could interfere with deforming them to a single multipoint.
In our notation,  when a multipoint involving strands from $i$ to $k$ splits into a multipoint of intersection of strands from $i$ to $j$ and the other of strands from $j+1$ to $k$, $i\leq j \leq k$, with the pairs of strands from these different groups meeting at double points: 
\begin{equation}\label{eq:split-many}
 \I^i_{k}=_\d \I^i_{j} \I^{j+1}_k \X^{i, j}_{j+1, k} =_\d   \X^{i,j}_{j+1, k} \I^i_{i+k-j-1} \I^{i+k-j}_k.
\end{equation}

\begin{figure}[h]
\centering
    \import{./images/}{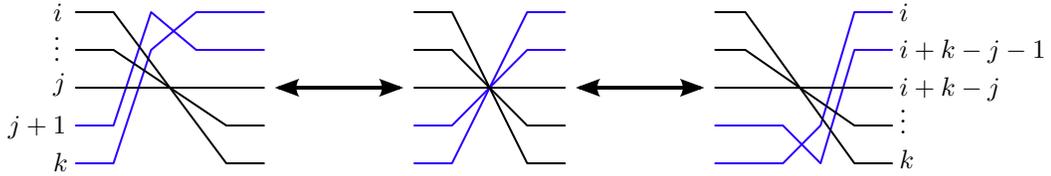}

\caption{Splitting a multipoint, as in~\eqref{eq:split-many}.}
\label{fig:split-multipts}
\end{figure}

When just one  strand  (top or bottom) is moved off a multipoint, meeting the other strands in double points instead, we have the relation 
\begin{equation}\label{eq:split-one}
\I^{i}_j=_\d  \I^{i+1}_j \X_{i+1,j}^i =_\d \X_{i+1, j}^i \I^{i}_{j-1}  =_\d   \I^{i}_{j-1} \X_{j}^{i, j-1}
=_\d \X_{j}^{i, j-1} \I_{i+1}^{j}, 
\end{equation}
and similarly
\begin{equation}\label{eq:split-one-middle}
 \I^i_k =_\d \X^j_{j+1, k} \I^i_{k-1} \X^{i+k-j, k-1}_k
\end{equation}
for splitting strand $j$ off an intersection multipoint of strands $i$ to $k$, $i<j<k$. 
There are other variants for splitting into various partitions of the set of strands.

\subsection{Moving multipoints through other lines.} Using an isotopy of individual components,  we can move a multipoint through a collection of adjacent parallel strands that intersect  the strands in the multipoint in double points:
\begin{equation}\label{eq:move-multipts-lines}
\I^i_j \X^{i, j}_{j+1, k} =_\d \X^{i, j}_{j+1, k} \I^{i+k-j}_k, \qquad \I^{j+1}_k \X^{i, j}_{j+1, k} =_\d 
\X^{i, j}_{j+1, k} \I^{i}_{i+k-j-1}.
\end{equation}
see Figure~\ref{fig:move-multipts-lines}(top). Note how the indices of $\I$ and $\X$ must match for this move to be possible. 
This move is modeled by the corresponding move on the complexification of a real line arrangement.
 Note that the multipoint splittings in the previous subsection are related by a sequence of moves of this type. 

We can also switch the order of two multipoints when one involves a subset of strands of the other as in Figure~\ref{fig:move-multipts-lines}(bottom):  
\begin{equation}\label{eq:switch-multipts}
 \I^i_j \I^i_k =_\d \I^i_k \I^{k+i-j}_k, \quad i<j \leq k.
\end{equation}
This move can be obtained as a composition of a multipoint splitting, moving multipoint past lines, and merging back (moves~\eqref{eq:split-many}, its inverse, and move~\eqref{eq:move-multipts-lines}), or proved directly, cf~\cite{CS}.

\begin{figure}[h]
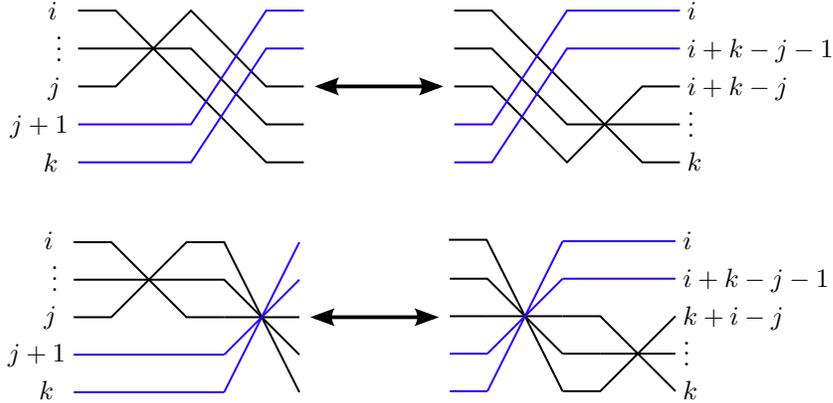

\centering
    \import{./images/}{swap.pdf_tex}

    \import{./images/}{IIswap.pdf_tex}

\caption{Top: moving multipoints past other lines, as in~\eqref{eq:move-multipts-lines}. Bottom: switching two multipoints.}
\label{fig:move-multipts-lines}
\end{figure}

\subsection{Switching self-intersections and tangencies on same strands.}  We can interchange a tangency with an adjacent intersection between the same two strands in the diagram:
\begin{equation} \label{eq:swap-tang-crossing}
\I_{i} \T_i =_\d \T_i \I_{i},
\end{equation}
see Figure~\ref{fig:swap-tang-crossing}. The boundary monodromy given by 
$\sigma_i^3$. 
\begin{figure}[h]
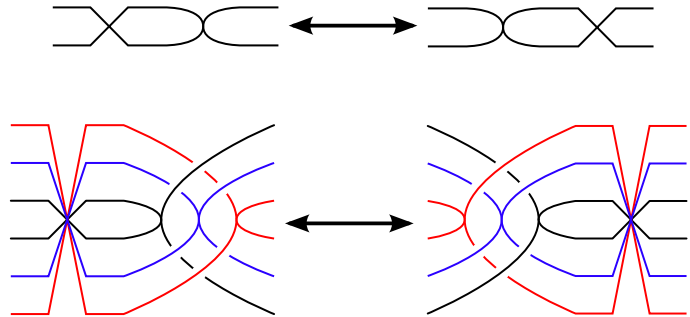

\centering
    \import{./images/}{TIswap.pdf_tex}

    \import{./images/}{TNIswap.pdf_tex}

\caption{Swapping self-intersection and tangency, as in~\ref{eq:swap-tang-crossing}.}
\label{fig:swap-tang-crossing}
\end{figure}
More generally, one can switch a full intersection of $2n$ strands and a tangency nest on these $2n$ strands, 
$$\TN_{1, 2n}\I^1_{2n}=_\d \I^1_{2n}\TN_{1,2n}.$$
This move is modeled by the Scott deformations of two topologically equivalent germs of $n$ cusps with intersection multiplicity 4; the boundary braid is determined by the topological type of the singularity.

\subsection{Moving multipoints through tangencies.} We can move an intersection point through a tangency or a tangency nest, in several ways. Move (i) below is a basic version, moves (ii) and (iii) are its generalizations, and moves (iv) and (v) are variants. See Figure~\ref{fig:all-multipt-tang}.  These moves are more involved than the previous ones: they introduce additional braiding rather than simply switching the elements of the diagram.    Only moves (i) and (ii) will be used in Section~5, the rest are not needed until Section~7. 
For this type of move, one needs to be careful with various symmetric versions: because of the asymmetry in our chosen tangency model, the braiding is somewhat asymmetric.

\noindent {\bf(i)} Moving a double point up or down through a tangency: 
\begin{equation}\label{eq:move-doublept-tang}
 \T_i \I_{i+1} =_\d \sigma_{i+1} \sigma_i^{-1} \T_{i+1} \I_i
  \end{equation}

\noindent {\bf (ii)} Moving a multipoint up or down through a tangency nest on the same strands:
\begin{equation}\label{eq:move-multipt-tang}
 \TN_{1, 2n} \I^1_{n} =_\d \Delta_{1,n} \Delta_{n+1, 2n}^{-1} \TN_{1, 2n} \I^{n+1}_{2n}
\end{equation}
where $\Delta_{j, k}$  stands for a positive half-twist on the collection of strands $j, \dots, k$. We stated the move for the tangency nest on strands $1, \dots, 2n$. 

 \noindent {\bf (iii)} Moving a strand through a tangency nest, with double point intersections: 
\begin{equation}\label{eq:move-strand-tang}
\TN_{1,2n}\I_{2n}\I_{2n-1}\dots\I_{n+1}=_\d \sigma_{2n}\sigma_{2n-1}\dots\sigma_{n+1}\sigma_n^{-1}\sigma_{n-1}^{-1}\dots\sigma_1^{-1}\TN_{1,2n}\I_1\I_2\dots\I_{n}.
\end{equation}

\noindent {\bf (iv)} Switching the order of a double point and a tangency:

\begin{equation}\label{eq:move-doublept-tang2}
\T_i\I_{i+1}=_\d \I_{i+1}\sigma_{i+1}\T_i\sigma_{i+1}^{-1}
\end{equation}

\noindent {\bf (v)}  Moving a multipoint to the other side of a tangency nest on the same strands:

\begin{equation}\label{eq:TNIswap2}
\TN_{1,2n}\I^1_n=_\d \I_{1,n}\Delta_{1,n}\TN_{1,2n}\Delta_{1,n}^{-1}
\end{equation}

\begin{figure}[h]
\centering
%
\begingroup%
  \makeatletter%
  \providecommand\color[2][]{%
    \errmessage{(Inkscape) Color is used for the text in Inkscape, but the package 'color.sty' is not loaded}%
    \renewcommand\color[2][]{}%
  }%
  \providecommand\transparent[1]{%
    \errmessage{(Inkscape) Transparency is used (non-zero) for the text in Inkscape, but the package 'transparent.sty' is not loaded}%
    \renewcommand\transparent[1]{}%
  }%
  \providecommand\rotatebox[2]{#2}%
  \newcommand*\fsize{\dimexpr\f@size pt\relax}%
  \newcommand*\lineheight[1]{\fontsize{\fsize}{#1\fsize}\selectfont}%
  \ifx\svgwidth\undefined%
    \setlength{\unitlength}{353.59715499bp}%
    \ifx\svgscale\undefined%
      \relax%
    \else%
      \setlength{\unitlength}{\unitlength * \real{\svgscale}}%
    \fi%
  \else%
    \setlength{\unitlength}{\svgwidth}%
  \fi%
  \global\let\svgwidth\undefined%
  \global\let\svgscale\undefined%
  \makeatother%
  \begin{picture}(1,1.20932306)%
    \lineheight{1}%
    \setlength\tabcolsep{0pt}%
    \put(0,0){\includegraphics[width=\unitlength,page=1]{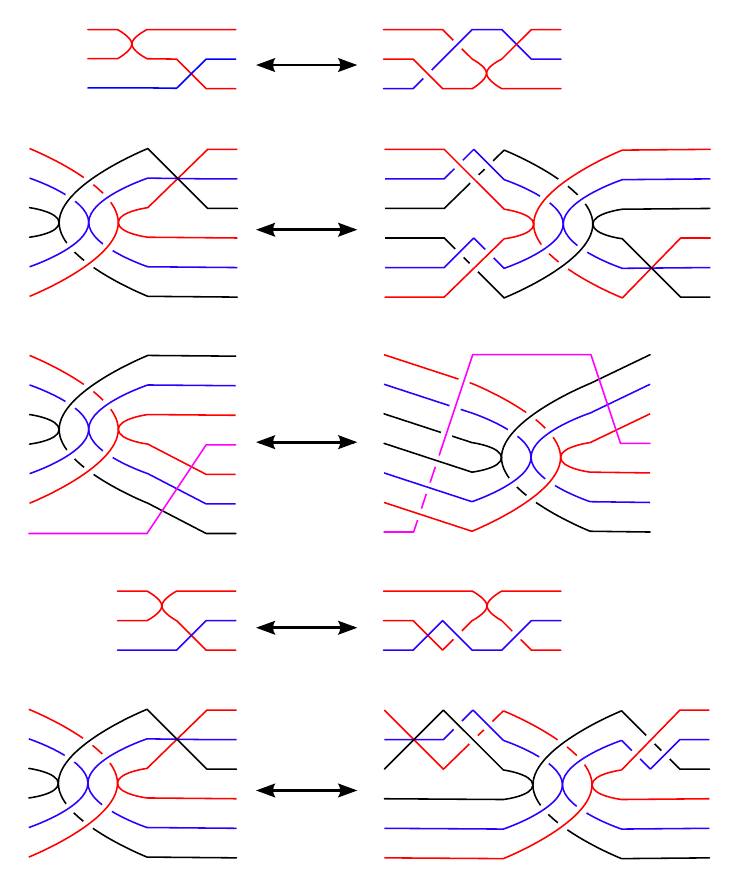}}%
    \put(0.40215752,1.14922022){\color[rgb]{0,0,0}\makebox(0,0)[lt]{\lineheight{1.25}\smash{\begin{tabular}[t]{l}$(i)$\end{tabular}}}}%
    \put(0.39679553,0.93419162){\color[rgb]{0,0,0}\makebox(0,0)[lt]{\lineheight{1.25}\smash{\begin{tabular}[t]{l}$(ii)$\end{tabular}}}}%
    \put(0.39143348,0.65361075){\color[rgb]{0,0,0}\makebox(0,0)[lt]{\lineheight{1.25}\smash{\begin{tabular}[t]{l}$(iii)$\end{tabular}}}}%
    \put(0.39161446,0.38764363){\color[rgb]{0,0,0}\makebox(0,0)[lt]{\lineheight{1.25}\smash{\begin{tabular}[t]{l}$(iv)$\end{tabular}}}}%
    \put(0.39697651,0.16716558){\color[rgb]{0,0,0}\makebox(0,0)[lt]{\lineheight{1.25}\smash{\begin{tabular}[t]{l}$(v)$\end{tabular}}}}%
  \end{picture}%
\endgroup%

\caption{Moving multipoints through tangencies and tangency nests: moves (i)--(v).}
\label{fig:all-multipt-tang}
\end{figure}

It seems plausible that the moves (i)--(v) can be realized by isotopies of individual components. {Move (iv) is a Hurwitz move}.  However, we only prove a weaker claim that we need:  if the local move is performed on a larger arrangement, the elements before and after the move make equivalent contribution into the total boundary braid of the global arrangement, therefore the boundary braid is preserved, up to isotopy. For each move, we  write the ``front'' and ``back'' boundary braids,  as explained in Section~\ref{s:diagrams}, and show that 
the ``front'' braid of the left-hand side is isotopic to the ``front'' of the right-hand side, and the same is true for the ``back'' braids. 
\begin{remark}
	Note that the moves $(i)-(v)$ each have a symmetric pair, where the braiding appears on the other side of the equation. This can be seen by concatenating the diagrams with  the inverse braid (and removing the braiding on one side by a braid isotopy). For example, for move (i) we multiply equation~\eqref{eq:move-doublept-tang} by $\sigma_i \sigma_{i+1}^{-1}$ on the left to get 
$$
	\sigma_i \sigma_{i+1}^{-1} \T_i \I_{i+1} =_\d  \T_{i+1} \I_i.
	$$
	 For moves $(iv), (v)$, it is also useful to observe that $\Delta_{1,n}$ and $\I^1_n$ commute.
	
	Similarly, by mirroring the proofs along two axes, one gets another symmetric version of the moves. This corresponds to reading the formulas right to left and switching the indices $j\leftrightarrow 2n-j+1$, where $2n$ is the number of strands.
\end{remark}
  
\begin{proof}[Proof of Move (i):]    The front and back braids for~\eqref{eq:move-doublept-tang} are given by
$$
\text{ front: }  \sigma_{i+1} = \sigma_{i+1} \sigma_i^{-1} \sigma_i, \qquad \text{ back: }  \sigma_i^{-1} \sigma_{i+1}^{-1}= \sigma_{i+1} \sigma_i^{-1} \sigma_{i+1}^{-1} \sigma_i^{-1}, 
$$
see Figure~\ref{fig:Move6proof}. 
\end{proof}

\begin{figure}[ht]
	\centering
	\includegraphics[scale=.8]{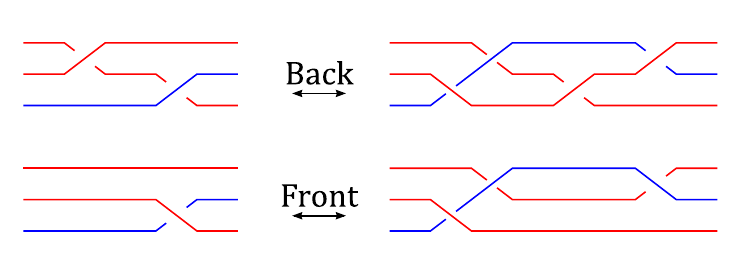}
	\caption{Proof of Move (i): the front and back braids of the diagram, before and after sliding a double point past a tangency.} 
	\label{fig:Move6proof}
\end{figure}

  \begin{proof}[Proof of Move (ii):] For~\eqref{eq:move-multipt-tang},
   the front braid of the diagram before and after the move is
given by 
$$
\text{ front: }  \Delta_{1, n}^{-1} \Delta_{n+1, 2n}^{-1} \Delta_{1, n} = \Delta_{1, n} \Delta_{n+1, 2n}^{-1} \Delta_{1, n}^{-1} \Delta_{n+1, 2n}^{-1} \Delta_{n+1, 2n} = \Delta_{n+1, 2n}^{-1}.
$$ 
The back braids, before and after the move, are 
$$
\text{ back: }   B \Delta_{1, n}^{-1} = \Delta_{1, n} \Delta_{n+1, 2n}^{-1} B \Delta_{n+1, 2n}^{-1},
$$
where 
$B = (\sigma_n^{-1}\sigma_{n-1}^{-1}\dots \sigma_1^{-1})\dots 
(\sigma_{2n+1}^{-1} \dots \sigma_n^{-1})$ is the braid where the parallel strands $1, \dots, n$ cross under the parallel strands 
$(n+1), \dots, 2n$, and the relation above follows from the identities such as  $\Delta_{1, n} B= B \Delta_{n+1, 2n}$  (one can slide any braid on the strands $1, \dots ,n$ past $B$ by an isotopy to get the same braid on the strands $n+1, \dots, 2n$ on the other side). See Figure~\ref{fig:genMove6proof}.
\begin{figure}[ht]
	\centering
	%
\begingroup%
  \makeatletter%
  \providecommand\color[2][]{%
    \errmessage{(Inkscape) Color is used for the text in Inkscape, but the package 'color.sty' is not loaded}%
    \renewcommand\color[2][]{}%
  }%
  \providecommand\transparent[1]{%
    \errmessage{(Inkscape) Transparency is used (non-zero) for the text in Inkscape, but the package 'transparent.sty' is not loaded}%
    \renewcommand\transparent[1]{}%
  }%
  \providecommand\rotatebox[2]{#2}%
  \newcommand*\fsize{\dimexpr\f@size pt\relax}%
  \newcommand*\lineheight[1]{\fontsize{\fsize}{#1\fsize}\selectfont}%
  \ifx\svgwidth\undefined%
    \setlength{\unitlength}{297.24961889bp}%
    \ifx\svgscale\undefined%
      \relax%
    \else%
      \setlength{\unitlength}{\unitlength * \real{\svgscale}}%
    \fi%
  \else%
    \setlength{\unitlength}{\svgwidth}%
  \fi%
  \global\let\svgwidth\undefined%
  \global\let\svgscale\undefined%
  \makeatother%
  \begin{picture}(1,0.66852019)%
    \lineheight{1}%
    \setlength\tabcolsep{0pt}%
    \put(0,0){\includegraphics[width=\unitlength,page=1]{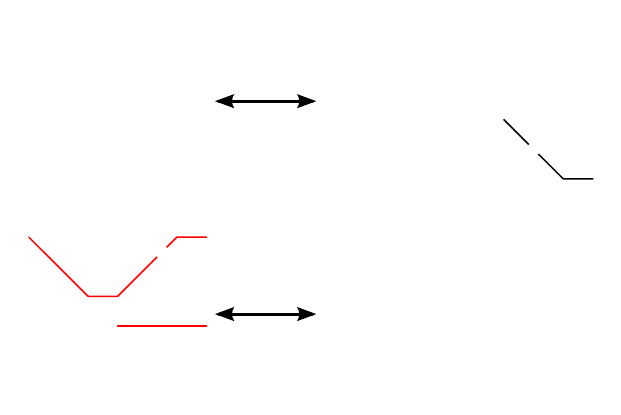}}%
    \put(0.38971613,0.52445779){\color[rgb]{0,0,0}\makebox(0,0)[lt]{\lineheight{1.25}\smash{\begin{tabular}[t]{l}back\end{tabular}}}}%
    \put(0.39057377,0.18769628){\color[rgb]{0,0,0}\makebox(0,0)[lt]{\lineheight{1.25}\smash{\begin{tabular}[t]{l}front\end{tabular}}}}%
    \put(0,0){\includegraphics[width=\unitlength,page=2]{generalizedMove6proof.pdf}}%
  \end{picture}%
\endgroup%

	\caption{Proof of Move (ii): the back (top figure) and the front (bottom figure) braids of the diagram, before and after sliding a multipoint past a tangency nest.} 
	\label{fig:genMove6proof}
\end{figure}
\end{proof}

  \begin{proof}[Proof of Move (iii):] this can be seen by induction from Move (i) or by comparing the front and back braids directly.  We have 
  $$
  \text{ front: }  \Delta_{1, n}^{-1} \Delta_{n+1, 2n}^{-1} \sigma_{2n-1} \dots \sigma_{n+1}
  = 
  \sigma_{2n-1} \dots \sigma_{n+1} \sigma_n^{-1} \dots \sigma_1^{-1} \Delta_{2, n+1}^{-1}  \Delta_{n+2, 2n+1}^{-1} \sigma_1 \dots \sigma_n,
  $$
  where the identity holds because the braid $\sigma_1 \dots \sigma_n$ represents strand 1 crossing over strands 
  $2, \dots n+1$, so
  $\Delta_{2, n+1}^{-1}  \sigma_1 \dots \sigma_n = \sigma_1 \dots \sigma_n \Delta_{1, n}^{-1}$, and similarly 
  $\Delta_{n+1, 2n}^{-1} \sigma_{2n-1} \dots \sigma_{n+1}=  \Delta_{n+2, 2n+1}^{-1}  \sigma_{2n-1} \dots \sigma_{n+1}$. The back braids are examined similarly (the reader should draw a picture).  \end{proof}

  \begin{proof}[Proof of Move (iv):] this is very similar to Move (i), 
$$
\text{ front: }  \sigma_{i+1} = \sigma_{i+1} \sigma_{i+1} \sigma_{i+1}^{-1}, \qquad \text{ back: }  \sigma_i^{-1} \sigma_{i+1}^{-1}= \sigma_{i+1}^{-1} \sigma_{i+1} \sigma_i^{-1} \sigma_{i+1}^{-1}, 
$$
\end{proof}

\begin{proof}[Proof of Move (v):]   using the braid $B$ as in Move (ii), we have
\begin{equation*}
\begin{split}
\text{ front: } &  \Delta_{n+1, 2n}^{-1}\Delta_{1, n}^{-1}\Delta_{1, n}=\Delta_{1, n}\Delta_{1, n}\Delta_{1, n}^{-1}\Delta_{n+1, 2n}^{-1}\Delta_{1, n}^{-1} \\
\text{ back: } &  B\Delta_{1, n}^{-1}=\Delta_{1, n}^{-1}\Delta_{1, n} B\Delta_{1, n}^{-1}.
\end{split}
\end{equation*}
\end{proof}

\begin{remark}
	Note that $\Delta_{1,n}\Delta_{n+1,2n}B=\Delta_{1,2n}$, which can also be used to justify the general move of Figure~\ref{fig:swap-tang-crossing}.
\end{remark}

\subsection{Moving braiding past other elements.} We can move a braid element through an intersection or tangency that involves the corresponding strands. For a generator, 
we have 
\begin{equation}\label{eq:move-braid-multipt}
\sigma_j \I^i_k=_\d \I^i_k \sigma_{i+k-j-1}, \qquad i\leq j <k.
 \end{equation}
This is Cohen--Suciu move 5(d), see~\cite{CS}; for a proof in our setting, compare back and front braids before and after. 

There is another type of move, for switching a braid element with tangencies; we leave it to the reader since this move will not be needed in this paper.

\subsection{Removing braiding at the edges} \label{ss:braid-at-edge} If there is a braid element at the beginning or the end of a diagram, this element obviously cancels {in the \emph{closed} braid} when we compose the inverse of the back braid with the front braid of the diagram. Therefore, we can get rid of any braid element at the edge of the diagram. (In fact, the entire disk arrangement can be isotoped to remove the braiding, cf \cite[Section 5.6 (1)-(2)]{CS}.)

\section{$\Q$HD fillings via diagrammatic moves}
In this section, we give a proof of Theorem~\ref{thm:main} for graphs $G_{k,1}$, postponing the general case until 
Section~\ref{s:twoparam}.
Starting with the Scott deformation of the germ corresponding to the family of singularities in Theorem~\ref{thm:main}, we would like to  obtain an arrangement with the same boundary braid, such that the number of intersection points equals the number of curves. The latter requirement holds for the combinatorial arrangement where the curve $\Gamma_0$ of weight $8+k$ has its double point disjoint from all the other curves, and for each of the remaining curves $\Gamma_1, \dots, \Gamma_{k+6}$, all the other curves  go through the double point of every $\Gamma_i$ with  multiplicity $1$ for $i=1,\dots,
k+6$. To generate the immersed disk arrangement with prescribed combinatorial data and the correct boundary, we use the diagrammatic moves from the previous section. The output of the moves, at the end of the procedure, will be the wiring diagrams of the 
desired arrangements; these are shown in 
Figure~\ref{fig:QHD0} for $k$ odd and  Figure~\ref{fig:QHD1}   for $k$ even. 
(The marked points for these DJVS arrangements are the intersection points in the diagram.) 
\begin{proof}[Proof of Theorem~\ref{thm:main} for $n=1$] The proof consists of presenting a sequence of moves changing the diagrams as required; we give this sequence  for $k$ odd.  The moves for $k$ even are similar. For a reader willing to accept a proof by picture, the entire proof is in Figure~\ref{fig:ProofMoves} illustrating the moves connecting Figure~\ref{fig:Scott0} to Figure~\ref{fig:QHD0} for $k=-1$; the pattern extends in a straightforward way to all odd $k$. For a more rigorous argument, we spell out the required moves in formulas below.

\begin{figure}[htbp!]
\def\svgwidth{0.95\textwidth}
	\centering
%
\begingroup%
  \makeatletter%
  \providecommand\color[2][]{%
    \errmessage{(Inkscape) Color is used for the text in Inkscape, but the package 'color.sty' is not loaded}%
    \renewcommand\color[2][]{}%
  }%
  \providecommand\transparent[1]{%
    \errmessage{(Inkscape) Transparency is used (non-zero) for the text in Inkscape, but the package 'transparent.sty' is not loaded}%
    \renewcommand\transparent[1]{}%
  }%
  \providecommand\rotatebox[2]{#2}%
  \newcommand*\fsize{\dimexpr\f@size pt\relax}%
  \newcommand*\lineheight[1]{\fontsize{\fsize}{#1\fsize}\selectfont}%
  \ifx\svgwidth\undefined%
    \setlength{\unitlength}{1381.33117676bp}%
    \ifx\svgscale\undefined%
      \relax%
    \else%
      \setlength{\unitlength}{\unitlength * \real{\svgscale}}%
    \fi%
  \else%
    \setlength{\unitlength}{\svgwidth}%
  \fi%
  \global\let\svgwidth\undefined%
  \global\let\svgscale\undefined%
  \makeatother%
  \begin{picture}(1,1.4346213)%
    \lineheight{1}%
    \setlength\tabcolsep{0pt}%
    \put(0,0){\includegraphics[width=\unitlength,page=1]{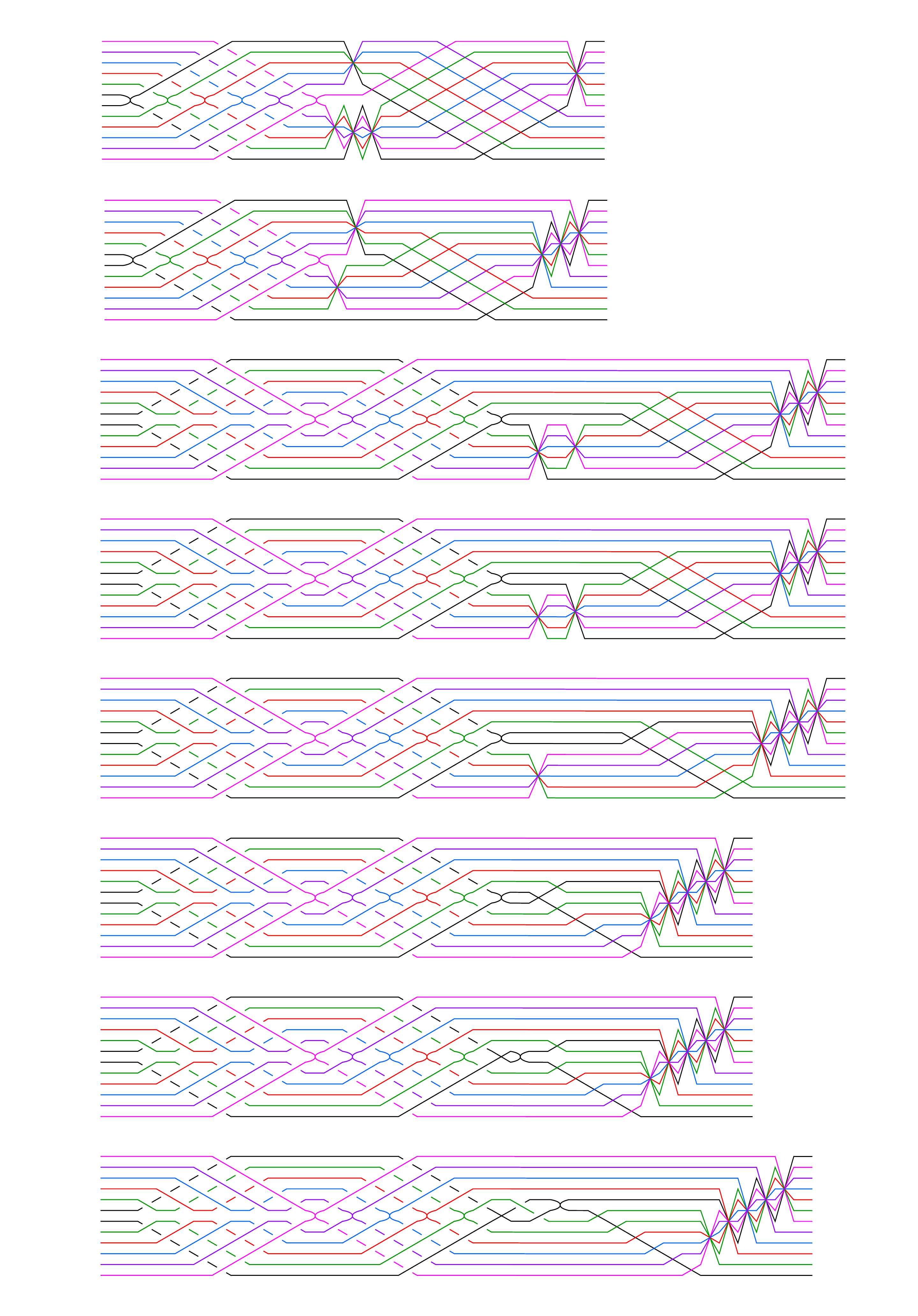}}%
    \put(0.0214114,1.30719333){\color[rgb]{0,0,0}\makebox(0,0)[lt]{\lineheight{1.25}\smash{\begin{tabular}[t]{l}$(a)$\end{tabular}}}}%
    \put(0.0214114,1.13344788){\color[rgb]{0,0,0}\makebox(0,0)[lt]{\lineheight{1.25}\smash{\begin{tabular}[t]{l}$(b)$\end{tabular}}}}%
    \put(0.0214114,0.95922914){\color[rgb]{0,0,0}\makebox(0,0)[lt]{\lineheight{1.25}\smash{\begin{tabular}[t]{l}$(c)$\end{tabular}}}}%
    \put(0.0214114,0.78548373){\color[rgb]{0,0,0}\makebox(0,0)[lt]{\lineheight{1.25}\smash{\begin{tabular}[t]{l}$(d)$\end{tabular}}}}%
    \put(0.0179409,0.61221158){\color[rgb]{0,0,0}\makebox(0,0)[lt]{\lineheight{1.25}\smash{\begin{tabular}[t]{l}$(e)$\end{tabular}}}}%
    \put(0.02234967,0.43846614){\color[rgb]{0,0,0}\makebox(0,0)[lt]{\lineheight{1.25}\smash{\begin{tabular}[t]{l}$(f)$\end{tabular}}}}%
    \put(0.0214114,0.26424733){\color[rgb]{0,0,0}\makebox(0,0)[lt]{\lineheight{1.25}\smash{\begin{tabular}[t]{l}$(g)$\end{tabular}}}}%
    \put(0.0214114,0.09050188){\color[rgb]{0,0,0}\makebox(0,0)[lt]{\lineheight{1.25}\smash{\begin{tabular}[t]{l}$(h)$\end{tabular}}}}%
  \end{picture}%
\endgroup%

\caption{The sequence of moves.}
\label{fig:ProofMoves}
\end{figure}

By Proposition~\ref{prop:scott-pics}, the Scott deformation is given by
the diagram  
$$
\TN \I^{m+1}_{ 2m-1} (\I^{m+1}_{ 2m})^{m-4} \I^1_{2m}
$$
on $2m$ strands, with $m=k+7$ even. Use~\eqref{eq:split-many} to split the multipoint on the right as 
$\I^1_{2m}= \I^1_{ m-1} \X^{1, m-1}_{m, 2m} \I^1_{ m+1}$, which gives 
$$
\TN \I^{m+1}_{ 2m-1} (\I^{m+1}_{ 2m})^{m-4} \I^1_{ m-1} \X^{1, m-1}_{m, 2m} \I^1_{ m+1},$$ Figure~\ref{fig:ProofMoves}(a). For the next step, combine the multipoint $\I^1_{ m-1}$ 
with the adjacent strand of double points (the pink strand in Figure~\ref{fig:ProofMoves}(a)): expand 
$\X^{1, m-1}_{m, 2m}= \X^{1,m-1}_m \X^{2,m}_{m+1, 2m}$ by~\eqref{eq:expandX} and then replace
$\I^1_{ m-1} \X^{1, m-1}_m = \I^1_{m}$ by~\eqref{eq:split-one} to get 
$$
\TN \I^{m+1}_{ 2m-1} (\I^{m+1}_{ 2m})^{m-4} \I^1_{ m} \X^{2,m}_{m+1, 2m} \I^1_{ m+1},
$$ which equals 
$\TN \I^{m+1}_{2m-1}\I^{1}_m (\I^{m+1}_{2m})^{m-4}  \X^{2,m}_{m+1, 2m} \I^1_{m+1}$
because the corresponding multipoints commute.  Then slide the $(m-4)$ bottom multipoints
 $(\I^{m+1}_{ 2m})^{m-4}$ up and combine them, one by one, with the double points on strands $2, \dots, m-3$ going down:
the two multipoints in Figure~\ref{fig:ProofMoves}(a) are combined with the purple and blue strands of double points, respectively. More formally, expanding $\X^{2,m}_{m+1, 2m}= \X^{3, m}_{m+1, 2m} \X^2_{3, m+2}$, 
then moving a multipoint past parallel strands by~\eqref{eq:move-multipts-lines}, and merging it with double points by \eqref{eq:split-one} gives

$$
\I^{m+1}_{ 2m} \X^{2,m}_{m+1, 2m} =_\d \I^{m+1}_{ 2m} \X^{3, m}_{m+1, 2m} \X^2_{3, m+2} =_\d 
\X^{3, m}_{m+1, 2m} \I^3_{ m+2} \X^2_{3, m+2} =_\d \X^{3, m}_{m+1, 2m} \I^2_{ m+2},
$$
which takes care of the rightmost multipoint. We then repeat this procedure inductively:
\begin{equation*}
\begin{split}
& \TN  \I^{m+1}_{ 2m-1}\I^1_{ m} (\I^{m+1}_{ 2m})^{m-4}  \X^{2,m}_{m+1, 2m} \I^1_{ m+1} =_\d 
\TN \I^{m+1}_{ 2m-1}\I^1_{ m} (\I^{m+1}_{ 2m})^{m-5}  \X^{3,m}_{m+1, 2m}  \I^2_{ m+2} \I^1_{ m+1} =_\d \\
& \TN  \I^{m+1}_{ 2m-1}\I^1_{ m} (\I^{m+1}_{ 2m})^{m-5} 
\X^{4,m}_{m+1, 2m} \X^3_{4, m+3} \I^2_{ m+2} \I^1_{ m+1} =_\d \\ 
& \TN  \I^{m+1}_{ 2m-1}\I^1_{ m} (\I^{m+1}_{ 2m})^{m-6} 
\X^{4,m}_{m+1, 2m}   \I^4_{ m+3} \X^3_{4, m+3} \I^2_{ m+2} \I^1_{ m+1} =_\d \\  
 & \TN  \I^{m+1}_{ 2m-1}\I^1_{ m} (\I^{m+1}_{2m})^{m-6} 
\X^{4,m}_{m+1, 2m}   \I^3_{ m+3} \I^2_{ m+2} \I^1_{ m+1}=_\d \dots =_\d \\
& \TN   \I^{m+1}_{ 2m-1}\I^1_{ m}   \X^{m-2,m}_{m+1, 2m}    \I^{m-3}_{ 2m-3} \dots \I^2_{m+2} \I^1_{ m+1}.
\end{split}
\end{equation*}
The resulting diagram is in~Figure~\ref{fig:ProofMoves}(b). The multipoints 
$\I^1_{ m}$ and  $\I^{m+1}_{ 2m-1}$ commute, so we rewrite
$$
\TN \I^1_{ m} \I^{m+1}_{ 2m-1}  \X^{m-2,m}_{m+1, 2m}    \I^{m-3}_{ 2m-3} \dots \I^2_{ m+2} \I^1_{ m+1}
$$
and use~\eqref{eq:move-multipt-tang} to slide $\I_{1, m}$ down past the tangency nest to get 
$$
   \Delta_{1,m} \Delta_{m+1, 2m}^{-1} \TN \I^{m+1}_{2m}    \I^{m+1}_{ 2m-1}  \X^{m-2,m}_{m+1, 2m}    \I^{m-3}_{ 2m-3} \dots \I^2_{ m+2} \I^1_{ m+1},
$$
shown in~Figure~\ref{fig:ProofMoves}(c). Next, we move strand $(m+1)$ from 
$\I^{m+1}_{ 2m}$  to  $\I^{m+1}_{ 2m-1}$ by  split and then merge moves~\eqref{eq:split-one}, 
$$
\I^{m+1}_{ 2m}    \I^{m+1}_{ 2m-1} =_\d  \I^{m+2}_{ 2m} \X^{m+1}_{m+2, 2m} \I^{m+1}_{ 2m-1}  =_\d \I^{m+2}_{ 2m}    \I^{m+1}_{ 2m}. 
$$
The result is the diagram in Figure~\ref{fig:ProofMoves}(d),
$$
  \Delta_{1,m} \Delta_{m+1, 2m}^{-1} \TN \I^{m+2}_{ 2m} 
  \I^{m+1}_{ 2m}  \X^{m-2,m}_{m+1, 2m}    \I^{m-3}_{ 2m-3} \dots \I^2_{ m+2} \I^1_{ m+1}.
$$
Now these two multipoints at the bottom can be moved up and combined with strands of double points (red and green strands in the figure).   Namely, expand 
$\X^{m-2,m}_{m+1, 2m} =  \X^{m-1, m}_{m+1, 2m} \X^{m-2}_{m-1, 2m-2}$ by~\eqref{eq:expandX},  move the multipoint by~\eqref{eq:move-multipts-lines}, and merge with double points by~\eqref{eq:split-one},  
$$
\I^{m+1}_{ 2m}  \X^{m-2,m}_{m+1, 2m} =_\d \I^{m+1}_{ 2m} \X^{m-1, m}_{m+1, 2m} \X^{m-2}_{m-1, 2m-2} =_\d \X^{m-1, m}_{m+1, 2m}  \I^{m-1}_{ 2m-2} \X^{m-2}_{m-1, 2m-2} =_\d 
\X^{m-1, m}_{m+1, 2m} \I^{m-2}_{ 2m-2},
$$
producing the diagram 
$$
\Delta_{1,m} \Delta_{m+1, 2m}^{-1} \TN \I^{m+2}_{ 2m} \X^{m-1, m}_{m+1, 2m} \I^{m-2}_{ 2m-2}  \I^{m-3}_{ 2m-3} \dots \I^2_{ m+2} \I^1_{ m+1}
$$
in Figure~\ref{fig:ProofMoves}(e). Then similarly move up $\I^{m+2}_{ 2m}$ and combine with double points to get the diagram  
\begin{equation*}
 \begin{split}
&\Delta_{1,m} \Delta_{m+1, 2m}^{-1} \TN   \X^{m}_{m+1, 2m} \I^{m-1}_{m}  \I^m_{2m-2}   \I^{m-2}_{ 2m-2}  \I^{m-3}_{ 2m-3} \I^2_{ m+2} \I^1_{ m+1} =_\d  \\ 
& \Delta_{1,m} \Delta_{m+1, 2m}^{-1} \TN  \I^m_{m+1} \X^{m+1}_{m+2, 2m} \I^{m-1}_{m}  \I^m_{ 2m-1}   \I^{m-2}_{2m-2}  \I^{m-3}_{ 2m-3} \dots \I^2_{ m+2} \I^1_{ m+1}
\end{split}
\end{equation*}
in Figure~\ref{fig:ProofMoves}(f). In this last figure, the strands $m$ and $m+1$ 
have a tangency and an adjacent intersection, and we can use move~\eqref{eq:swap-tang-crossing}. To write this in formulas, separate the innermost tangency from the tangency nest:
$$
\TN =_\d \TN_{2, 2m-1} \sigma_1^{-1} \dots \sigma_{m-1}^{-1} \sigma_{2m-1}^{-1} \dots \sigma_{m+1}^{-1} \T_m =_\d \TN_{2, 2m-1}
\sigma_{top}^{-1}  \sigma_{bot}^{-1} \T_m,
$$
where we introduced notation $\sigma_{top}^{-1}$ and   $\sigma_{bot}^{-1}$ to shorten the formulas. Then after move~\eqref{eq:swap-tang-crossing}, we get the diagram

$$
\Delta_{1,m} \Delta_{m+1, 2m}^{-1}   \TN_{2, 2m-1}
\sigma_{top}^{-1}  \sigma_{bot}^{-1}  \I^m_{m+1} \T_m  \X^{m+1}_{m+2, 2m}  \I^{m-1}_{m}  \I^m_{2m-1}   \I^{m-2}_{2m-2}  \I^{m-3}_{2m-3} \dots \I^2_{m+2} \I^1_{m+1} 
$$
shown in Figure~\ref{fig:ProofMoves}(g). The elements $\X^{m+1}_{m+2, 2m}$ and  
$\I^{m-1}_{m}$ commute, so we rewrite  
$\T_m  \X^{m+1}_{m+2, 2m} \I^{m-1}_{m} =_\d \T_m \I^{m-1}_{m} \X^{m+1}_{m+2, 2m}$ and then 
slide the intersection point $\I^{m-1}_{m}$ down past the tangency $\T_m$ by~\eqref{eq:move-doublept-tang} to get the diagram 
\begin{equation*}
\begin{split}
& \Delta_{1,m} \Delta_{m+1, 2m}^{-1}   \TN_{2, 2m-1}
\sigma_{top}^{-1}  \sigma_{bot}^{-1}   \I^m_{m+1} \sigma_{m-1} \sigma_{m}^{-1} \T_{m-1}\I^m_{m+1}  \X^{m+1}_{m+2, 2m} \\
 & \hspace{3in} \I^m_{2m-1}  \I^{m-2}_{ 2m-2}  \I^{m-3}_{ 2m-3} \dots \I^2_{ m+2} \I^1_{ m+1} =_\d \\
 & \Delta_{1,m} \Delta_{m+1, 2m}^{-1}   \TN_{2, 2m-1}
\sigma_{top}^{-1}  \sigma_{bot}^{-1}  \I^m_{m+1} \sigma_{m-1} \sigma_{m}^{-1} \T_{m-1} \X^{m}_{m+1, 2m} \I^m_{2m-1}  \I^{m-2}_{ 2m-2}  \I^{m-3}_{ 2m-3} \dots \I^2_{m+2} \I^1_{m+1}
\end{split}
\end{equation*}
in Figure~\ref{fig:ProofMoves}(h). Finally, use~\eqref{eq:split-one} to combine  the remaining strand of double points with the adjacent multipoint, $\X^{m}_{m+1, 2m} \I^m_{ 2m-1} = \I^m_{ 2m}$, which gives the diagram
$$
\Delta_{1,m} \Delta_{m+1, 2m}^{-1}   \TN_{2, 2m-1}
\sigma_{top}^{-1}  \sigma_{bot}^{-1}  \I^m_{ m+1} \sigma_{m-1} \sigma_{m}^{-1} \T_{m-1} \I^m_{ 2m}  \I^{m-2}_{2m-2}  \I^{m-3}_{ 2m-3} \dots \I^2_{m+2} \I^1_{m+1}
$$
shown in Figure~\ref{fig:QHD0}. When all intersection points are marked, this final diagram satisfies the combinatorial condition~\eqref{eq:lines-points}, as desired,  and therefore produces a $\Q$HD filling via the DJVS construction. 
The diagram can be slightly simplified to 
$$\TN_{2, 2m-1}
\sigma_{top}^{-1}  \sigma_{bot}^{-1}  \I^m_{m+1} \sigma_{m-1} \sigma_{m}^{-1} \T_{m-1} \I^m_{2m}  \I_{2m-2}^{m-2}  \I^{m-3}_{2m-3} \dots \I^2_{ m+2} \I^1_{ m+1}
$$
by removing the braiding in the beginning, see subsection~\ref{ss:braid-at-edge}.
\begin{figure}[ht]
\includegraphics[scale=.4]{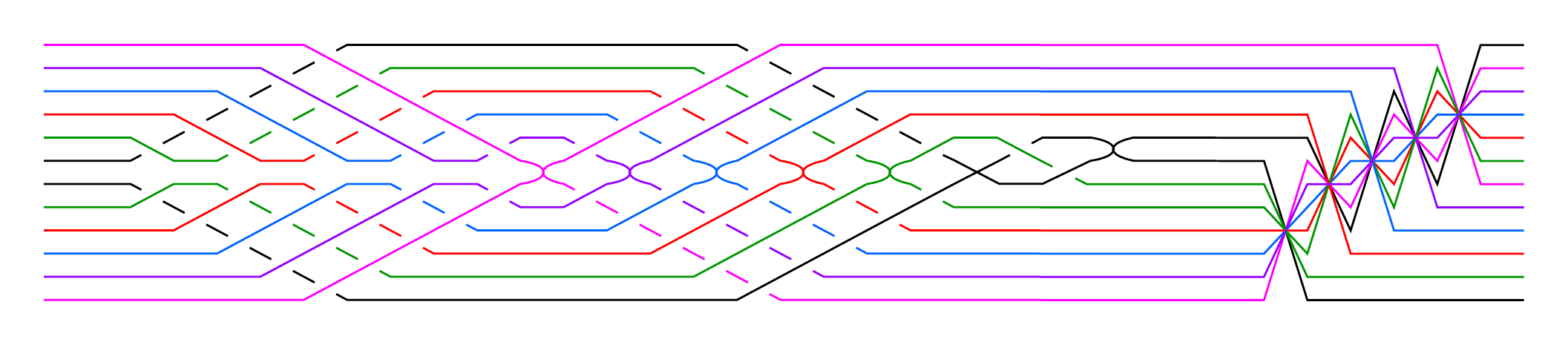}
\caption{The pattern for arrangements producing a $\Q$HD filling for $k$ odd ($k=-1$ is shown).}
\label{fig:QHD0}
\end{figure}
 
For $k$ even, a DJVS arrangement with the desired properties in shown in Figure~\ref{fig:QHD1}. We omit the moves for this case, the final factorization is as follows:
$$
\TN_{2, 2m-1}\sigma_{bot}^{-1}\sigma_{top}^{-1}\I^m_{m+1}\sigma_m\sigma_{m-1}^{-1}T_{m+1}\I^1_{m+1}\I^3_{m+3}\dots\I_{2m}^m.$$
As for $k$ odd, one of the moves slides an intersection point past a tangency nest,  adding some braiding at the beginning of the diagram. To simplify the picture, we gave an equivalent diagram with this braiding removed.

\begin{figure}[ht]
\includegraphics[scale=.3]{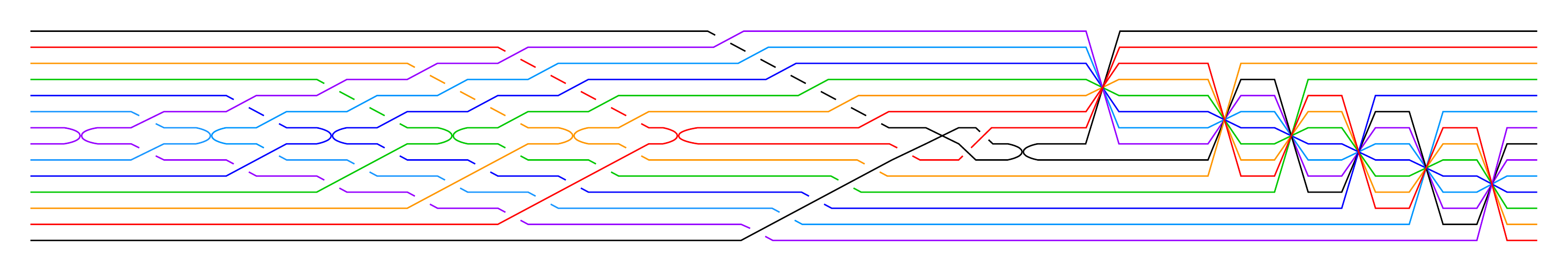}
\caption{The pattern for arrangements producing 
a $\Q$HD filling for $k$ even ($k=0$ is shown).}
\label{fig:QHD1}
\end{figure}
 \end{proof}

\section{Constructing exotic 4-manifolds with new rational blowdowns}

Our new rational blowdown can in principle be used to create exotic $4$-manifolds in the same way as the classical rational blowdown, although we have yet to  find any interesting exotica that cannot be constructed by classical means. To give a very basic example, we show how the configuration of spheres corresponding to our blowdown graph can be found in a blowup of $E(3)$.
We will use an elliptic fibration that has an $I_7$ fiber; recall that this fiber is formed by a cyclic chain of seven 2-spheres, $S_1, \dots, S_7$, such that $S_i$ and $S_j$ intersect transversely at one point iff $i=j \pm 1 \mod 7$, and are disjoint for distinct $i$, $j$ otherwise. The configuration corresponding to $G_{0,1}$ will be formed by (the strict transforms of) a linear chain of 5 spheres $S_1, \dots S_5$ in the $I_7$ fiber together with two disjoint sections, one  of them intersecting $S_3$ and the other $S_4$ at a generic point. Additionally, we will need a third section disjoint from the above configuration (and intersecting $S_7$ at a generic point). This is provided by the next lemma:

\begin{lemma} \label{lm:E3} There is an elliptic fibration $E(3)$ that contains a configuration of smoothly embedded spheres shown in Figure~\ref{fig:E3}: a necklace of seven $(-2)$ spheres forming an $I_7$ fiber, and two disjoint sections that intersect 
two adjacent spheres in the necklace. Further, we can find a fibration as above with an $I_2$ fiber and a third section that is disjoint from the other two and intersects the $I_7$ fiber at another prescribed sphere.

The spheres in the configuration can be assumed to be symplectic with respect to a compatible symplectic structure on $E(3)$.  
\end{lemma}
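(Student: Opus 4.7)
The plan is to build the entire configuration inside a single rational elliptic surface $\pi\colon E(1)\to\cpone$ and then fiber-sum with two further copies of $E(1)$ to obtain $E(3)$. The key point is that the whole picture in Figure~\ref{fig:E3}---the $I_7$ necklace, the $I_2$ fiber, and the sections---can be arranged inside one $E(1)$ summand; since fiber-summing preserves K\"ahler structures away from the gluing locus, and sections can be matched across a smooth torus fiber used for summing, the result gives the required configuration in $E(3)$ with all spheres symplectic with respect to an induced K\"ahler form.

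The first step is to exhibit an elliptic fibration $\pi\colon E(1)\to \cpone$ whose singular fiber list includes both $I_7$ and $I_2$. A configuration such as $I_7+I_2+I_2+I_1$ has total Euler characteristic $12$ and appears in Persson's classification of singular fibers on rational elliptic surfaces, so it is realized by an explicit pencil of cubics in $\cptwo$; blowing up its nine base points produces $E(1)$ with nine pairwise disjoint sections (the exceptional divisors of the blow-up), the required $I_7$ necklace $S_1,\dots,S_7$, and an $I_2$ fiber.

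The second step is to find three sections hitting the prescribed components $S_3$, $S_4$, and $S_7$. The incidence of a section with the $I_7$ fiber is encoded by a component map from the Mordell--Weil group $\mathrm{MW}(\pi)$ to the component group $\Z/7$ of the $I_7$ fiber, and for the chosen singular fiber configuration the Shioda--Tate formula shows that this map is surjective; translating by elements of $\mathrm{MW}(\pi)$ therefore moves sections to arbitrary components. Disjointness is preserved under translation by torsion elements, since torsion sections have vanishing height pairing and hence mutual intersection number zero, so three sections hitting $S_3,S_4,S_7$ can be chosen pairwise disjoint.

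Finally, one fiber-sums twice with smooth copies of $E(1)$ along a torus fiber of $\pi$ disjoint from the configuration; the three sections match across the gluing to yield disjoint sections of $E(3)$, while the $I_7$ and $I_2$ fibers and their incidences with the sections are preserved. The main obstacle is verifying the surjectivity of the component map together with simultaneous disjointness of the three sections; this requires some care with the Mordell--Weil structure of the chosen pencil, but for the Persson configurations containing an $I_7$ fiber the Oguiso--Shioda classification of Mordell--Weil lattices of rational elliptic surfaces gives an explicit description of $\mathrm{MW}(\pi)$ and its torsion subgroup from which the required sections can be extracted directly.
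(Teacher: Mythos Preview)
Your approach---building the configuration in a single $E(1)$ and then fiber-summing---is genuinely different from the paper's, which constructs the Lefschetz fibration on $E(3)$ directly via mapping-class-group factorizations on a punctured torus: it manipulates the relation $\delta_1\delta_2=(\alpha_1\alpha_2\beta)^4$ and its cube so that a subword $\alpha_1^6\alpha_2$ appears, giving the $I_7$ fiber, with the boundary curves $\delta_i$ tracking the sections and their incidence with the necklace. Your algebraic-geometric route is attractive in principle, but as written it has two concrete problems.

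First, the configuration $I_7+I_2+I_2+I_1$ does not occur on a rational elliptic surface. The associated root lattice $A_6\oplus A_1\oplus A_1$ has discriminant $28$; the trivial lattice would then have full rank $10$, and since $\mathrm{NS}(E(1))$ is unimodular the index $[\mathrm{NS}:T]=\sqrt{28}$ would have to be an integer. Equivalently, $A_6\oplus A_1\oplus A_1$ does not embed as a root sublattice of $E_8$. The Oguiso--Shioda table you appeal to would in fact tell you this: the only rational elliptic surfaces carrying an $I_7$ fiber have trivial lattice $A_6$ or $A_6\oplus A_1$, i.e.\ fiber configurations $I_7+5I_1$ or $I_7+I_2+3I_1$.

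Second, and more seriously, for both of those valid configurations the Mordell--Weil group is \emph{torsion-free} (isomorphic to $A_2^*$, respectively $\langle 1/14\rangle$). Your mechanism for arranging disjointness---translation by torsion---is therefore unavailable, and the surjectivity of the component map onto $\Z/7$ cannot be witnessed by torsion sections. You still need three pairwise disjoint sections hitting prescribed components of the $I_7$; this may well be achievable via an explicit Shioda height-pairing computation, or by choosing a specific cubic pencil and tracking where the nine base points land on the degenerate $I_7$ cubic, but that is genuine work your sketch does not supply, and it is precisely the ``main obstacle'' you yourself flag. Without it the argument is incomplete.
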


\begin{proof} Consider a twice-punctured torus with two meridians $\alpha_1$ and $\alpha_2$, a longitude $\beta$, and the two boundary parallel curves $\delta_1$ and $\delta_2$, as in Figure~\ref{fig:2holetorus}. For brevity, we will use the same notation for the curve and for the positive Dehn twist around this curve. By~\cite[Section 3.2]{KorOz}, we have a relation  
$$
\delta_1 \delta_2 = (\alpha_1 \alpha_2 \beta)^4
$$
in the mapping class group of the $2$-holed torus.

\begin{figure}[ht]
	\def\svgwidth{0,25\columnwidth}
	\centering
	%
\begingroup%
  \makeatletter%
  \providecommand\color[2][]{%
    \errmessage{(Inkscape) Color is used for the text in Inkscape, but the package 'color.sty' is not loaded}%
    \renewcommand\color[2][]{}%
  }%
  \providecommand\transparent[1]{%
    \errmessage{(Inkscape) Transparency is used (non-zero) for the text in Inkscape, but the package 'transparent.sty' is not loaded}%
    \renewcommand\transparent[1]{}%
  }%
  \providecommand\rotatebox[2]{#2}%
  \newcommand*\fsize{\dimexpr\f@size pt\relax}%
  \newcommand*\lineheight[1]{\fontsize{\fsize}{#1\fsize}\selectfont}%
  \ifx\svgwidth\undefined%
    \setlength{\unitlength}{241.45591916bp}%
    \ifx\svgscale\undefined%
      \relax%
    \else%
      \setlength{\unitlength}{\unitlength * \real{\svgscale}}%
    \fi%
  \else%
    \setlength{\unitlength}{\svgwidth}%
  \fi%
  \global\let\svgwidth\undefined%
  \global\let\svgscale\undefined%
  \makeatother%
  \begin{picture}(1,1.35219418)%
    \lineheight{1}%
    \setlength\tabcolsep{0pt}%
    \put(0,0){\includegraphics[width=\unitlength,page=1]{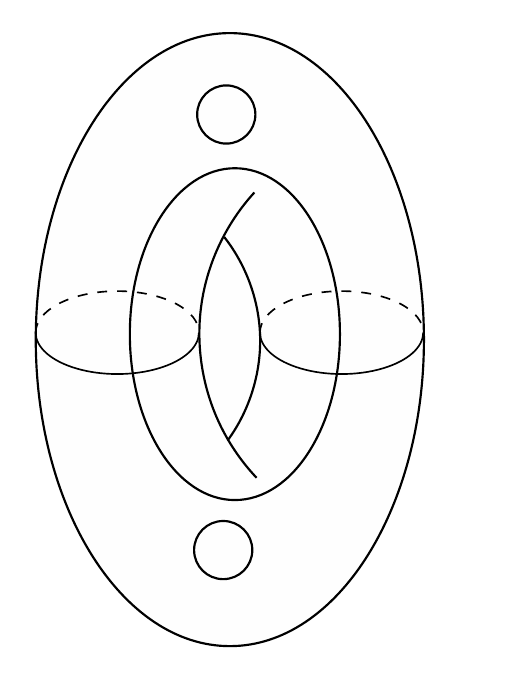}}%
    \put(0.53040773,1.11528166){\color[rgb]{0,0,0}\makebox(0,0)[lt]{\lineheight{1.25}\smash{\begin{tabular}[t]{l}$\delta_1$\end{tabular}}}}%
    \put(0.52628242,0.24651711){\color[rgb]{0,0,0}\makebox(0,0)[lt]{\lineheight{1.25}\smash{\begin{tabular}[t]{l}$\delta_2$\\\end{tabular}}}}%
    \put(0.7023796,0.54001226){\color[rgb]{0,0,0}\makebox(0,0)[lt]{\lineheight{1.25}\smash{\begin{tabular}[t]{l}$\alpha_2$\end{tabular}}}}%
    \put(0.11951451,0.52829135){\color[rgb]{0,0,0}\makebox(0,0)[lt]{\lineheight{1.25}\smash{\begin{tabular}[t]{l}$\alpha_1$\end{tabular}}}}%
    \put(0.69467956,0.90142541){\color[rgb]{0,0,0}\makebox(0,0)[lt]{\lineheight{1.25}\smash{\begin{tabular}[t]{l}$\beta$\end{tabular}}}}%
    \put(0,0){\includegraphics[width=\unitlength,page=2]{delta12.pdf}}%
  \end{picture}%
\endgroup%

	\caption{Curves $\alpha_1$, $\alpha_2$, $\beta$, $\delta_1$, $\delta_2$ on a 2-holed torus.} 
	\label{fig:2holetorus}
\end{figure}

Using relations $\alpha_i \beta \alpha_i = \beta \alpha_i \beta$ and $\alpha_1 \alpha_2 = \alpha_2 \alpha_1$, and writing $x^y = y^{-1}x y$ for conjugates, we have
\begin{equation*}
\begin{split}
 & \delta_1 \delta_2 = (\alpha_1 \alpha_2 \beta)^4 =
 \alpha_1 \alpha_2 \beta \alpha_1 \alpha_2 \beta \alpha_1 \underline{\alpha_2 \beta \alpha_2} \alpha_1 \beta =
 \alpha_1 \alpha_2 \beta \underline{\alpha_1 \alpha_2} \beta \alpha_1 \beta \alpha_2 \underline{\beta \alpha_1 \beta} =
  \alpha_1 \underline{\alpha_2 \beta \alpha_2} \alpha_1 \beta \alpha_1 \beta \alpha_2 \underline{\alpha_1 \beta \alpha_1}  = \\
 & \alpha_1 \beta \alpha_2 \underline{\beta \alpha_1 \beta}  \alpha_1 \beta \alpha_2 \alpha_1^2  \beta^{\alpha_1}= 
 \alpha_1 \beta \alpha_2 \alpha_1 \beta  \alpha_1^2 \underline{\beta \alpha_2 \alpha_1^2}  \beta^{\alpha_1} = 
 \alpha_1 \beta \alpha_2 \alpha_1 \underline{\beta \alpha_1^4} \beta^{\alpha_1^2} \alpha_2 \beta^{\alpha_1} = 
 \alpha_1 \underline{\beta \alpha_2   \alpha_1^5} \beta^{\alpha_1^4} \beta^{\alpha_1^2} \alpha_2 \beta^{\alpha_1} = \\
 & \alpha_1^6 \underline{\beta^{\alpha_1^5} \alpha_2}    \beta^{\alpha_1^4} \beta^{\alpha_1^2} \alpha_2 \beta^{\alpha_1} = \alpha_1^6 \alpha_2 (\beta^{\alpha_1^5})^{\alpha_2}  \beta^{\alpha_1^4} \beta^{\alpha_1^2} \alpha_2 \beta^{\alpha_1},
  \end{split}
\end{equation*}
with the subwords to be changed at each step underlined for ease for reading. Taking the cube of this relation and performing similar manipulations to move $\alpha_1$ and $\alpha_2$ at the cost of taking further conjugates of the Dehn twists around the longitudinal curve $\beta$, we obtain a relation of the form 
\begin{equation} \label{eq:delta12}
\alpha_1^{18} \alpha_2^{3}\, (\text{15 other positive Dehn twists}) = \delta_1^3 \delta_2^3.
\end{equation}
 In turn, this gives
gives a relation  
$$
\alpha_1^{18} \alpha_2^{3}\, (\text{15 other positive Dehn twists})=1
$$
on the torus when the two holes are capped off, and therefore 
gives an elliptic fibration over $S^2$.  Treating each Dehn twist as a vanishing cycle for a Lefschetz singularity, we have a Lefschetz fibration on $E(3)$ with torus fiber and 36 nodal fibers.  The punctures in the original relation~\eqref{eq:delta12} 
correspond to sections of this fibration, with self-intersection $-3$.  Alternatively,
we can replace some of the Lefschetz singularities by more complicated singular fibers:
we isolate the subword  $\alpha_1^6 \alpha_2$ and think of it as the monodromy around a singular point corresponding to an $I_7$ fiber (see \cite[Table 1.9]{HarKir} or \cite[Section 2.1]{stipsicz07}).  
The $I_7$ fiber is given by the necklace of seven spheres obtained by collapsing $\alpha_2$ and six parallel 
copies of $\alpha_1$. By carefully keeping track of the vanishing cycles separating the intersection of the fiber with two sections (represented by the boundary components $\delta_1, \delta_2$) we see that for the chosen $I_7$ fiber, the two sections of $E(3)$ intersect it in adjacent spheres of the necklace of length 7.

To find another section in the complement of this configuration of curves, we can go through a similar argument leveraging the ``star relation'' 
$$
  \delta_1 \delta_2 \delta_3 =  (\alpha_1 \alpha_2 \alpha_3 \beta)^3  
$$
in the mapping class of the torus with 3 holes, where $\delta_1$, $\delta_2$, $\delta_3$ are the boundary-parallel curves, and $\alpha_1$, $\alpha_2$, $\alpha_3$ are three meridians separating them,  \cite{Ger, KorOz}. Manipulating the cube of this relation, we can similarly construct an elliptic fibration on $E(3)$ with an $I_7$ fiber and three sections intersecting prescribed spheres in the $I_7$ necklace. Isolating an additional  $\alpha_1 \alpha_2$ term together with the factor $\alpha_1^6 \alpha_2$ used to create $I_7$, we can find the required $I_2$ fiber.

Finally, we can assume that $E(3)$ is equipped with a complex structure such that the fibers are symplectic; this ensures that the $(-2)$-spheres are symplectic. The sections can be assumed to be symplectic because we can use the standard symplectic disk bundles with a symplectic $0$-section of self-intersection $-1$ to cap off the boundary of the fibration arising from ~\eqref{eq:delta12}. 
\end{proof}

\begin{figure}[ht]
	\centering
    \import{./images/}{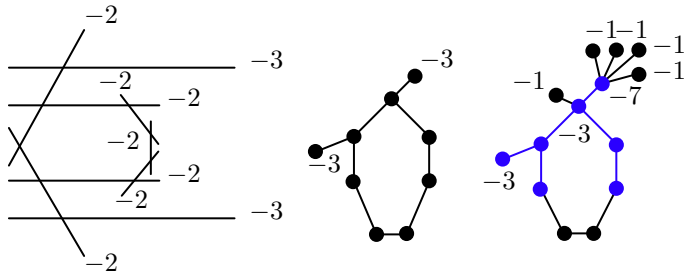}

	\caption{Left: A configuration of embedded spheres in $E(3)$. Right: after the blowups, there is a subgraph  $G_{0,1}$. Unlabeled vertices are understood to have framing $-2$.} 
	\label{fig:E3}
\end{figure}

\begin{prop} The blowup $E(3) \# 5\cptwobar$ of the elliptic fibration from Lemma~\ref{lm:E3} contains a plumbing given by the graph $G_{0,1}$. Blowing down this graph produces an exotic copy of  $5 \cptwo \# 27 \cptwobar$.  
\end{prop}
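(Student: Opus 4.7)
The plan is to execute a Fintushel--Stern-style rational blowdown construction, replacing a symplectic plumbing of $G_{0,1}$ inside $E(3)\#5\cptwobar$ by the Stein $\Q$HD filling $W$ given by Theorem~\ref{thm:main}, and then distinguishing the resulting manifold from the standard diffeomorphism type via Seiberg--Witten theory.

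First, I would exhibit the symplectic plumbing for $G_{0,1}$ inside $E(3)\#5\cptwobar$. Starting from the configuration of Lemma~\ref{lm:E3} --- five consecutive $(-2)$-spheres in the $I_7$ necklace together with the two disjoint symplectic sections of self-intersection $-3$ meeting two adjacent fiber components --- I would perform the five symplectic blow-ups indicated by Figure~\ref{fig:E3}, at points on the sections and/or at intersection points with the fiber, so that the strict transforms acquire the vertex framings prescribed by $G_{0,1}$. Since blow-ups are symplectic operations, this realizes $G_{0,1}$ as a symplectic plumbing inside $E(3)\#5\cptwobar$.

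Next, applying Theorem~\ref{thm:main} and Symington's symplectic rational blowdown, I would excise the plumbing neighborhood and glue in $W$, producing a closed symplectic $4$-manifold $X'$. Using $\chi(W)=1$, $\sigma(W)=0$, and the fact that the plumbing is negative definite, a direct computation gives $\chi(X')$ and $\sigma(X')$ matching those of $5\cptwo\#27\cptwobar$, and in particular $b_2^+(X')=5$. Simple connectivity of $X'$ follows from $\pi_1(E(3)\#5\cptwobar)=1$, $\pi_1(W)=1$, and a van Kampen argument (the image of $\pi_1$ of the plumbing boundary surjects onto $\pi_1$ of the complement, since the ambient manifold is simply connected and the plumbing itself is simply connected). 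Oddness of the intersection form is inherited from $(-1)$-spheres in the complement of the plumbing. By Freedman's classification, $X'$ is homeomorphic to $5\cptwo\#27\cptwobar$.

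Finally, since $W$ is Stein, $X'$ is symplectic with $b_2^+(X')=5\geq 2$. By Taubes' theorem, $\pm c_1(K_{X'})$ are Seiberg--Witten basic classes, so $X'$ has nonvanishing Seiberg--Witten invariants. On the other hand, $5\cptwo\#27\cptwobar$ admits a metric of positive scalar curvature (obtained from the Fubini--Study metrics by Gromov--Lawson connected sums) and has $b_2^+\geq 2$, so its Seiberg--Witten invariants vanish. Hence $X'$ is not diffeomorphic to $5\cptwo\#27\cptwobar$. The main technical obstacle is the first step: choosing the correct sequence of five blow-ups and verifying that the resulting strict-transform framings reproduce $G_{0,1}$ exactly while keeping the configuration symplectic; the subsequent topological bookkeeping, Freedman's theorem, and Taubes' theorem are by now standard in rational blowdown constructions.
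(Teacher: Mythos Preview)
Your overall strategy matches the paper's, but the simple-connectivity argument has a genuine gap. You assert $\pi_1(W)=1$, but this is neither proved nor true: $W$ is only a \emph{rational} homology disk, and for a $\Q$HD filling of a rational homology sphere $Y$ one has $|H_1(W)|^2=|H_1(Y)|$, which is the absolute value of the determinant of the $G_{0,1}$ intersection form and is certainly not $1$. So $H_1(W)$, and hence $\pi_1(W)$, is a nontrivial finite group. Your van~Kampen argument that $\pi_1(Y)\twoheadrightarrow\pi_1(X')$ (with $X'$ the complement of the plumbing) is correct, but combined with the false claim $\pi_1(W)=1$ it does not yield $\pi_1(X)=1$. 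The paper instead proves the complementary pair of facts: $\pi_1(Y)\to\pi_1(W)$ is surjective (since $\pi_1(W)$ is generated by meridians of the $\tilde\Gamma_i$, which lie in $Y$), and $\pi_1(X')=1$. The latter is where the real work is and is precisely why Lemma~\ref{lm:E3} provides the extra $I_2$ fiber and keeps track of which spheres in the $I_7$ necklace the sections hit: the leftover spheres of $I_7$ give disks bounding the meridians of the leaf fiber-spheres, while the $I_2$ fiber gives vanishing disks for $\alpha_1,\alpha_2$ that null-homotope the meridians $\delta_1,\delta_2$ of the section leaves.

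There is a second, smaller gap in your oddness argument. The five exceptional $(-1)$-spheres all meet the $G_{0,1}$ configuration (they are created by blowing up points on it), so they do not obviously give odd classes in the complement that survive the blowdown. The paper instead uses the \emph{third} $(-3)$-section from Lemma~\ref{lm:E3}, arranged from the outset to be disjoint from the entire configuration, to witness oddness. Finally, invoking ``Symington's symplectic rational blowdown'' is slightly off: Symington's theorem covers specific classical families, whereas here one needs the general fact that a Stein filling can be symplectically glued to a concave cap with matching contact boundary.
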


\begin{proof} Using the dual graph given by the lemma, we blow up 4 times at one of the $(-3)$ vertices, and do one more blowup at a $(-2)$-vertex adjacent to the $(-3)$-vertex, as in the figure. The resulting configuration contains the graph $G_{0,1}$.   Since  this is a plumbing of symplectic disk bundles, we have the canoninical contact structure induced on the boundary of the plumbing $G_{0,1}$. 
Surgering out this symplectic plumbed $4$-manifold and replacing it with the Stein rational homology disk filling $W$ provided by Theorem~\ref{thm:main}, we get a new closed symplectic manifold $X = X' \cup_{Y} W$, where $X'$ is the complement of the plumbing in the blowup of $E(3)$, and $Y$ is its boundary.  The intersection form of $X$ is odd: in the elliptic fibration constructed in Lemma~\ref{lm:E3}, we can find a section disjoint from all the components of the $G_{0,1}$ curve configuration; this section is unaffected by the surgery and thus gives a homology class in $X$ of self-intersection $-3$. We check that $X$ is simply connected. By construction, the filling $W$ is the complement of $\cup_i \nu(\Gamma_i)$ in the blowup of the 4-ball. Fixing a basepoint in $Y$, we have
$\pi_1(W) \hookrightarrow \pi_1(Y)$, because $\pi_1(W)$ is generated by the loops around the cocore disks of $\nu(\Gamma_i)$'s. It remains to check that $X'$ is simply connected. Since $X'$ is the complement of the tree plumbing of spheres in a simply connected manifold, $\pi_i(X)$ is generated by the loops that are boundaries of the normal disks to the spheres of the plumbing; moreover, it suffices to consider only the loops that correspond to the leaves of the tree. For our embedding of the configuration $G_{0,1}$, two of the leaves are the spheres of $I_7$ in the original elliptic fibration, and the other two correspond to the sections.  For the first pair of leaves, the loops that are boundaries of the normal disks to the spheres of $I_7$  are contractible because they bound disks in  the two punctured spheres given by the ``leftover'' part of the $I_7$ fiber (in the complement of our configuration). For the leaves that are sections, we need to check that the loops corresponding to the curves $\delta_1$ and $\delta_2$ in the $I_7$ fiber are contractible in $X'$. This follows from the fact that the curves $\alpha_1$ and $\alpha_2$ bound disks in $X'$, since by construction the elliptic fibration of Lemma~\ref{lm:E3} has an $I_2$ fiber where these curves in a nearby regular fiber are collapsed to points: using these disks, we can find a nullhomotopy for $\delta_1$ and $\delta_2$. 

   Since $\sigma(E(3))= -24$ and $b_2(E(3))=34$, after $5$ blowups and a blowdown of a negative definite $7$-vertex tree, we have $\sigma(X)= -22$ and $b_2(X)= 32$, so $b_2^+(X)=5$, $b_2^-(X)=27$.  It now follows that $X$ is homeomorphic to $5 \cptwo \# 27 \cptwobar$, but the two manifolds are not diffeomorphic because $X$ is symplectic, and  $5 \cptwo \# 27 \cptwobar$ has vanishing Seiberg--Witten invariant. \end{proof}

\section{$\Q$HD fillings for the two-parameter family}\label{s:twoparam}

In this section, we prove Theorem~\ref{thm:main} for the general case of the graphs $G_{k,n}$. We give more schematic diagrams with brief comments, but this should be sufficient as the steps are similar to those in Section 5. We only treat the case $k$ even; $k$ odd is similar.

The decorated germ corresponding to the graph $G_{k,n}$ has $n+k+6$ irreducible components, see Section~\ref{s:djvs}.    We set $s=n+k+6$ for brevity. The Scott deformation is obtained similarly to the previous examples, where the  curves corresponding to $C_1, \dots C_{k+6}$ are shown in blue, and the curves corresponding to $C'_1, \dots C'_n$ are shown in red. Assume $n\geq 2$, since the case $n=1$ was done previously.

{
		\fontsize{9pt}{5pt}
		\def\svgwidth{\textwidth}
		%
\begingroup%
  \makeatletter%
  \providecommand\color[2][]{%
    \errmessage{(Inkscape) Color is used for the text in Inkscape, but the package 'color.sty' is not loaded}%
    \renewcommand\color[2][]{}%
  }%
  \providecommand\transparent[1]{%
    \errmessage{(Inkscape) Transparency is used (non-zero) for the text in Inkscape, but the package 'transparent.sty' is not loaded}%
    \renewcommand\transparent[1]{}%
  }%
  \providecommand\rotatebox[2]{#2}%
  \newcommand*\fsize{\dimexpr\f@size pt\relax}%
  \newcommand*\lineheight[1]{\fontsize{\fsize}{#1\fsize}\selectfont}%
  \ifx\svgwidth\undefined%
    \setlength{\unitlength}{968.12284695bp}%
    \ifx\svgscale\undefined%
      \relax%
    \else%
      \setlength{\unitlength}{\unitlength * \real{\svgscale}}%
    \fi%
  \else%
    \setlength{\unitlength}{\svgwidth}%
  \fi%
  \global\let\svgwidth\undefined%
  \global\let\svgscale\undefined%
  \makeatother%
  \begin{picture}(1,0.29998002)%
    \lineheight{1}%
    \setlength\tabcolsep{0pt}%
    \put(0,0){\includegraphics[width=\unitlength,page=1]{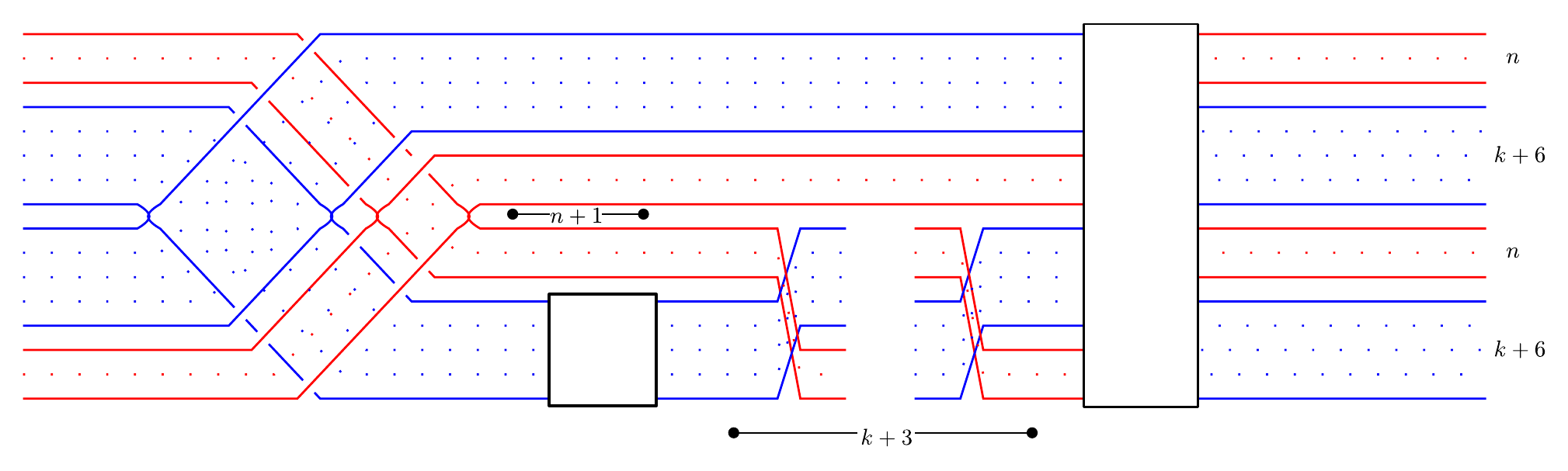}}%
    \put(0.71464799,0.15424256){\color[rgb]{0,0,0}\makebox(0,0)[lt]{\lineheight{1.25}\smash{\begin{tabular}[t]{l}$\I^1_{2s}$\end{tabular}}}}%
    \put(0,0){\includegraphics[width=\unitlength,page=2]{Familieskeven1.pdf}}%
    \put(0.48342664,0.08935115){\color[rgb]{0,0,0}\makebox(0,0)[lt]{\lineheight{1.25}\smash{\begin{tabular}[t]{l}$\I_{2s}^{s+1}$\end{tabular}}}}%
    \put(0.35611293,0.07286441){\color[rgb]{0,0,0}\makebox(0,0)[lt]{\lineheight{1.25}\smash{\begin{tabular}[t]{l}$\I_{2s}^{s+n+1}$\end{tabular}}}}%
    \put(0,0){\includegraphics[width=\unitlength,page=3]{Familieskeven1.pdf}}%
    \put(0.39263634,0.13369563){\color[rgb]{0,0,0}\makebox(0,0)[lt]{\lineheight{1.25}\smash{\begin{tabular}[t]{l}$\I^{s+1}_{s+n}$\end{tabular}}}}%
    \put(0,0){\includegraphics[width=\unitlength,page=4]{Familieskeven1.pdf}}%
    \put(0.31391151,0.13344944){\color[rgb]{0,0,0}\makebox(0,0)[lt]{\lineheight{1.25}\smash{\begin{tabular}[t]{l}$\I^{s+1}_{s+n}$\end{tabular}}}}%
    \put(0,0){\includegraphics[width=\unitlength,page=5]{Familieskeven1.pdf}}%
    \put(0.60009465,0.08935111){\color[rgb]{0,0,0}\makebox(0,0)[lt]{\lineheight{1.25}\smash{\begin{tabular}[t]{l}$\I_{2s}^{s+1}$\end{tabular}}}}%
  \end{picture}%
\endgroup%

}
In the  diagram of the Scott deformation, we use move \eqref{eq:split-many} to split the multipoint $\I^1_{2n}$:

{
	\fontsize{9pt}{5pt}
	\def\svgwidth{\textwidth}
	%
\begingroup%
  \makeatletter%
  \providecommand\color[2][]{%
    \errmessage{(Inkscape) Color is used for the text in Inkscape, but the package 'color.sty' is not loaded}%
    \renewcommand\color[2][]{}%
  }%
  \providecommand\transparent[1]{%
    \errmessage{(Inkscape) Transparency is used (non-zero) for the text in Inkscape, but the package 'transparent.sty' is not loaded}%
    \renewcommand\transparent[1]{}%
  }%
  \providecommand\rotatebox[2]{#2}%
  \newcommand*\fsize{\dimexpr\f@size pt\relax}%
  \newcommand*\lineheight[1]{\fontsize{\fsize}{#1\fsize}\selectfont}%
  \ifx\svgwidth\undefined%
    \setlength{\unitlength}{968.12283325bp}%
    \ifx\svgscale\undefined%
      \relax%
    \else%
      \setlength{\unitlength}{\unitlength * \real{\svgscale}}%
    \fi%
  \else%
    \setlength{\unitlength}{\svgwidth}%
  \fi%
  \global\let\svgwidth\undefined%
  \global\let\svgscale\undefined%
  \makeatother%
  \begin{picture}(1,0.29998003)%
    \lineheight{1}%
    \setlength\tabcolsep{0pt}%
    \put(0,0){\includegraphics[width=\unitlength,page=1]{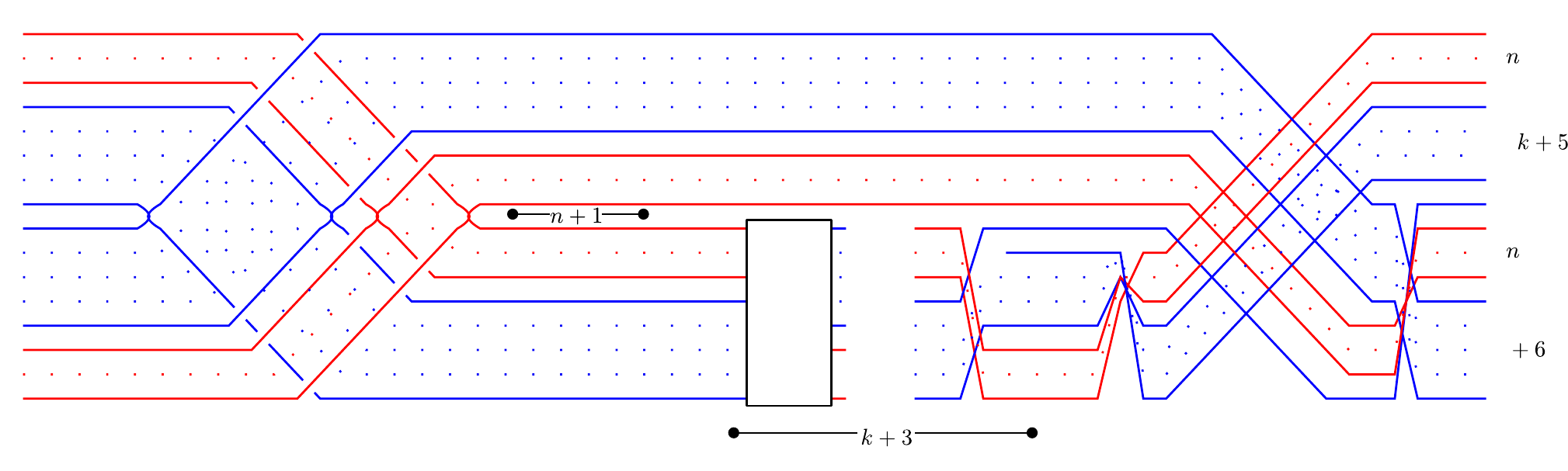}}%
    \put(0.48342813,0.08934886){\color[rgb]{0,0,0}\makebox(0,0)[lt]{\lineheight{1.25}\smash{\begin{tabular}[t]{l}$\I_{2s}^{s+1}$\end{tabular}}}}%
    \put(0,0){\includegraphics[width=\unitlength,page=2]{Familieskeven2.pdf}}%
    \put(0.60009619,0.08934886){\color[rgb]{0,0,0}\makebox(0,0)[lt]{\lineheight{1.25}\smash{\begin{tabular}[t]{l}$\I_{2s}^{s+1}$\end{tabular}}}}%
    \put(0,0){\includegraphics[width=\unitlength,page=3]{Familieskeven2.pdf}}%
    \put(0.68759645,0.08934886){\color[rgb]{0,0,0}\makebox(0,0)[lt]{\lineheight{1.25}\smash{\begin{tabular}[t]{l}$\I_{2s}^{s+2}$\end{tabular}}}}%
    \put(0,0){\includegraphics[width=\unitlength,page=4]{Familieskeven2.pdf}}%
    \put(0.37446228,0.07198021){\color[rgb]{0,0,0}\makebox(0,0)[lt]{\lineheight{1.25}\smash{\begin{tabular}[t]{l}$\I_{2s}^{s+n+1}$\end{tabular}}}}%
    \put(0,0){\includegraphics[width=\unitlength,page=5]{Familieskeven2.pdf}}%
    \put(0.31546419,0.13342059){\color[rgb]{0,0,0}\makebox(0,0)[lt]{\lineheight{1.25}\smash{\begin{tabular}[t]{l}$\I^{s+1}_{s+n}$\end{tabular}}}}%
    \put(0,0){\includegraphics[width=\unitlength,page=6]{Familieskeven2.pdf}}%
    \put(0.39509449,0.13342059){\color[rgb]{0,0,0}\makebox(0,0)[lt]{\lineheight{1.25}\smash{\begin{tabular}[t]{l}$\I^{s+1}_{s+n}$\end{tabular}}}}%
    \put(0,0){\includegraphics[width=\unitlength,page=7]{Familieskeven2.pdf}}%
    \put(0.88965257,0.10775857){\color[rgb]{0,0,0}\makebox(0,0)[lt]{\lineheight{1.25}\smash{\begin{tabular}[t]{l}$\I_{2s}^{s}$\end{tabular}}}}%
    \put(0,0){\includegraphics[width=\unitlength,page=8]{Familieskeven2.pdf}}%
  \end{picture}%
\endgroup%

}
Then use merge move~\eqref{eq:split-one} to combine the $\I^{s+2}_{2s}$ multipoint  with crossings of the (blue) strand $(s+1)$ passing above:

{
	\fontsize{9pt}{5pt}
	\def\svgwidth{\textwidth}
	%
\begingroup%
  \makeatletter%
  \providecommand\color[2][]{%
    \errmessage{(Inkscape) Color is used for the text in Inkscape, but the package 'color.sty' is not loaded}%
    \renewcommand\color[2][]{}%
  }%
  \providecommand\transparent[1]{%
    \errmessage{(Inkscape) Transparency is used (non-zero) for the text in Inkscape, but the package 'transparent.sty' is not loaded}%
    \renewcommand\transparent[1]{}%
  }%
  \providecommand\rotatebox[2]{#2}%
  \newcommand*\fsize{\dimexpr\f@size pt\relax}%
  \newcommand*\lineheight[1]{\fontsize{\fsize}{#1\fsize}\selectfont}%
  \ifx\svgwidth\undefined%
    \setlength{\unitlength}{968.12283325bp}%
    \ifx\svgscale\undefined%
      \relax%
    \else%
      \setlength{\unitlength}{\unitlength * \real{\svgscale}}%
    \fi%
  \else%
    \setlength{\unitlength}{\svgwidth}%
  \fi%
  \global\let\svgwidth\undefined%
  \global\let\svgscale\undefined%
  \makeatother%
  \begin{picture}(1,0.29998003)%
    \lineheight{1}%
    \setlength\tabcolsep{0pt}%
    \put(0,0){\includegraphics[width=\unitlength,page=1]{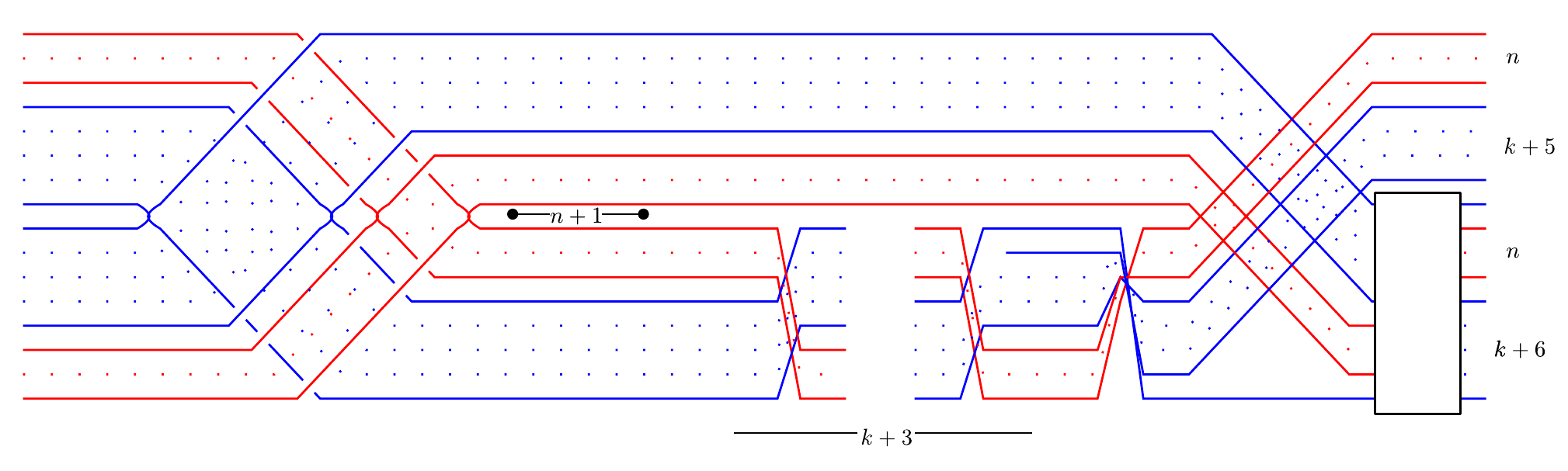}}%
    \put(0.88965258,0.10775857){\color[rgb]{0,0,0}\makebox(0,0)[lt]{\lineheight{1.25}\smash{\begin{tabular}[t]{l}$\I_{2s}^{s}$\end{tabular}}}}%
    \put(0,0){\includegraphics[width=\unitlength,page=2]{Familieskeven3.pdf}}%
    \put(0.70453282,0.09081236){\color[rgb]{0,0,0}\makebox(0,0)[lt]{\lineheight{1.25}\smash{\begin{tabular}[t]{l}$\I_{2s}^{s+1}$\end{tabular}}}}%
    \put(0,0){\includegraphics[width=\unitlength,page=3]{Familieskeven3.pdf}}%
    \put(0.47987125,0.08789656){\color[rgb]{0,0,0}\makebox(0,0)[lt]{\lineheight{1.25}\smash{\begin{tabular}[t]{l}$\I_{2s}^{s+1}$\end{tabular}}}}%
    \put(0,0){\includegraphics[width=\unitlength,page=4]{Familieskeven3.pdf}}%
    \put(0.61156942,0.09081236){\color[rgb]{0,0,0}\makebox(0,0)[lt]{\lineheight{1.25}\smash{\begin{tabular}[t]{l}$\I_{2s}^{s+1}$\end{tabular}}}}%
    \put(0,0){\includegraphics[width=\unitlength,page=5]{Familieskeven3.pdf}}%
    \put(0.37445832,0.07080776){\color[rgb]{0,0,0}\makebox(0,0)[lt]{\lineheight{1.25}\smash{\begin{tabular}[t]{l}$\I_{2s}^{s+n+1}$\end{tabular}}}}%
    \put(0,0){\includegraphics[width=\unitlength,page=6]{Familieskeven3.pdf}}%
    \put(0.30987053,0.13355789){\color[rgb]{0,0,0}\makebox(0,0)[lt]{\lineheight{1.25}\smash{\begin{tabular}[t]{l}$\I^{s+1}_{s+n}$\end{tabular}}}}%
    \put(0,0){\includegraphics[width=\unitlength,page=7]{Familieskeven3.pdf}}%
    \put(0.39043882,0.13342059){\color[rgb]{0,0,0}\makebox(0,0)[lt]{\lineheight{1.25}\smash{\begin{tabular}[t]{l}$\I^{s+1}_{s+n}$\end{tabular}}}}%
  \end{picture}%
\endgroup%

}
Note that $k+4$ is even and use move~\eqref{eq:switch-multipts} to switch multipoints, 
moving the $(k+4)$ copies of $\I^{s+1}_{2s}$ to the left: 

{
	\fontsize{9pt}{5pt}
	\def\svgwidth{\textwidth}
	%
\begingroup%
  \makeatletter%
  \providecommand\color[2][]{%
    \errmessage{(Inkscape) Color is used for the text in Inkscape, but the package 'color.sty' is not loaded}%
    \renewcommand\color[2][]{}%
  }%
  \providecommand\transparent[1]{%
    \errmessage{(Inkscape) Transparency is used (non-zero) for the text in Inkscape, but the package 'transparent.sty' is not loaded}%
    \renewcommand\transparent[1]{}%
  }%
  \providecommand\rotatebox[2]{#2}%
  \newcommand*\fsize{\dimexpr\f@size pt\relax}%
  \newcommand*\lineheight[1]{\fontsize{\fsize}{#1\fsize}\selectfont}%
  \ifx\svgwidth\undefined%
    \setlength{\unitlength}{968.12283325bp}%
    \ifx\svgscale\undefined%
      \relax%
    \else%
      \setlength{\unitlength}{\unitlength * \real{\svgscale}}%
    \fi%
  \else%
    \setlength{\unitlength}{\svgwidth}%
  \fi%
  \global\let\svgwidth\undefined%
  \global\let\svgscale\undefined%
  \makeatother%
  \begin{picture}(1,0.29998003)%
    \lineheight{1}%
    \setlength\tabcolsep{0pt}%
    \put(0,0){\includegraphics[width=\unitlength,page=1]{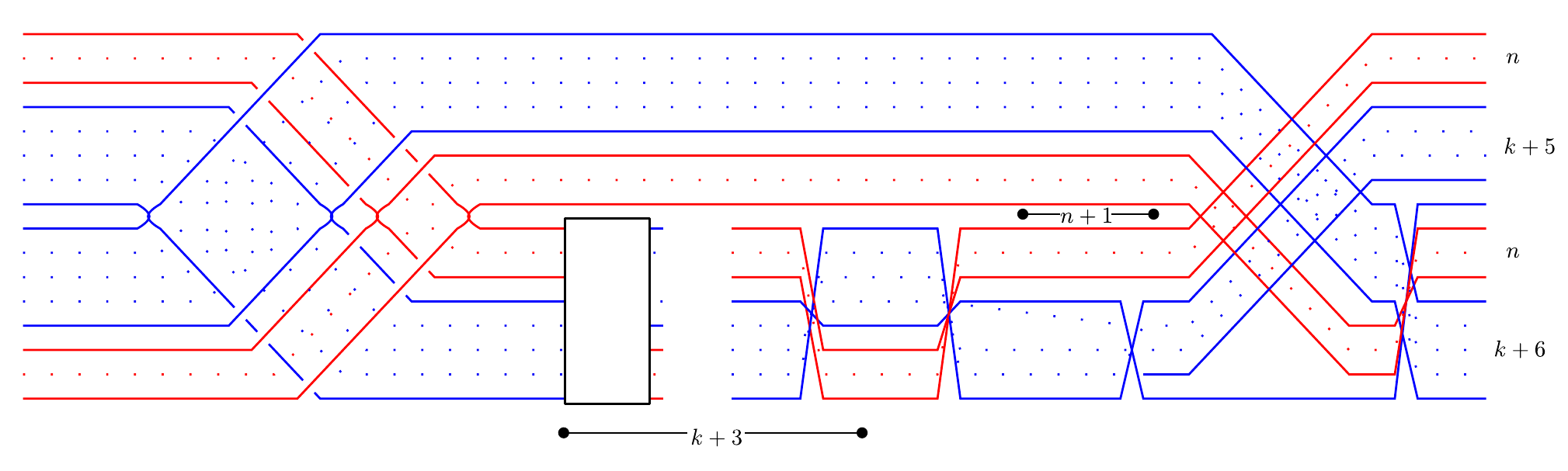}}%
    \put(0.36758696,0.09081236){\color[rgb]{0,0,0}\makebox(0,0)[lt]{\lineheight{1.25}\smash{\begin{tabular}[t]{l}$\I_{2s}^{s+1}$\end{tabular}}}}%
    \put(0,0){\includegraphics[width=\unitlength,page=2]{Familieskeven4.pdf}}%
    \put(0.49545813,0.09177003){\color[rgb]{0,0,0}\makebox(0,0)[lt]{\lineheight{1.25}\smash{\begin{tabular}[t]{l}$\I_{2s}^{s+1}$\end{tabular}}}}%
    \put(0,0){\includegraphics[width=\unitlength,page=3]{Familieskeven4.pdf}}%
    \put(0.57283467,0.09081236){\color[rgb]{0,0,0}\makebox(0,0)[lt]{\lineheight{1.25}\smash{\begin{tabular}[t]{l}$\I_{2s}^{s+1}$\end{tabular}}}}%
    \put(0,0){\includegraphics[width=\unitlength,page=4]{Familieskeven4.pdf}}%
    \put(0.64298941,0.13355789){\color[rgb]{0,0,0}\makebox(0,0)[lt]{\lineheight{1.25}\smash{\begin{tabular}[t]{l}$\I^{s+1}_{s+n}$\end{tabular}}}}%
    \put(0,0){\includegraphics[width=\unitlength,page=5]{Familieskeven4.pdf}}%
    \put(0.71322666,0.13348924){\color[rgb]{0,0,0}\makebox(0,0)[lt]{\lineheight{1.25}\smash{\begin{tabular}[t]{l}$\I^{s+1}_{s+n}$\end{tabular}}}}%
    \put(0,0){\includegraphics[width=\unitlength,page=6]{Familieskeven4.pdf}}%
    \put(0.68820983,0.07080776){\color[rgb]{0,0,0}\makebox(0,0)[lt]{\lineheight{1.25}\smash{\begin{tabular}[t]{l}$\I_{2s}^{s+n+1}$\end{tabular}}}}%
    \put(0,0){\includegraphics[width=\unitlength,page=7]{Familieskeven4.pdf}}%
    \put(0.8886755,0.10593092){\color[rgb]{0,0,0}\makebox(0,0)[lt]{\lineheight{1.25}\smash{\begin{tabular}[t]{l}$\I_{2s}^{s}$\end{tabular}}}}%
  \end{picture}%
\endgroup%

}
Then move the $(k+4)$  copies of $\I^{s+1}_{2s}$ through the tangency nest on the left, using move~\eqref{eq:move-multipt-tang} $k+4$ times (note the braiding $\Delta^{\pm(k+4)}$ on the left of the next diagram):

{
	\fontsize{9pt}{5pt}
	\def\svgwidth{\textwidth}
	%
\begingroup%
  \makeatletter%
  \providecommand\color[2][]{%
    \errmessage{(Inkscape) Color is used for the text in Inkscape, but the package 'color.sty' is not loaded}%
    \renewcommand\color[2][]{}%
  }%
  \providecommand\transparent[1]{%
    \errmessage{(Inkscape) Transparency is used (non-zero) for the text in Inkscape, but the package 'transparent.sty' is not loaded}%
    \renewcommand\transparent[1]{}%
  }%
  \providecommand\rotatebox[2]{#2}%
  \newcommand*\fsize{\dimexpr\f@size pt\relax}%
  \newcommand*\lineheight[1]{\fontsize{\fsize}{#1\fsize}\selectfont}%
  \ifx\svgwidth\undefined%
    \setlength{\unitlength}{968.12283325bp}%
    \ifx\svgscale\undefined%
      \relax%
    \else%
      \setlength{\unitlength}{\unitlength * \real{\svgscale}}%
    \fi%
  \else%
    \setlength{\unitlength}{\svgwidth}%
  \fi%
  \global\let\svgwidth\undefined%
  \global\let\svgscale\undefined%
  \makeatother%
  \begin{picture}(1,0.29998003)%
    \lineheight{1}%
    \setlength\tabcolsep{0pt}%
    \put(0,0){\includegraphics[width=\unitlength,page=1]{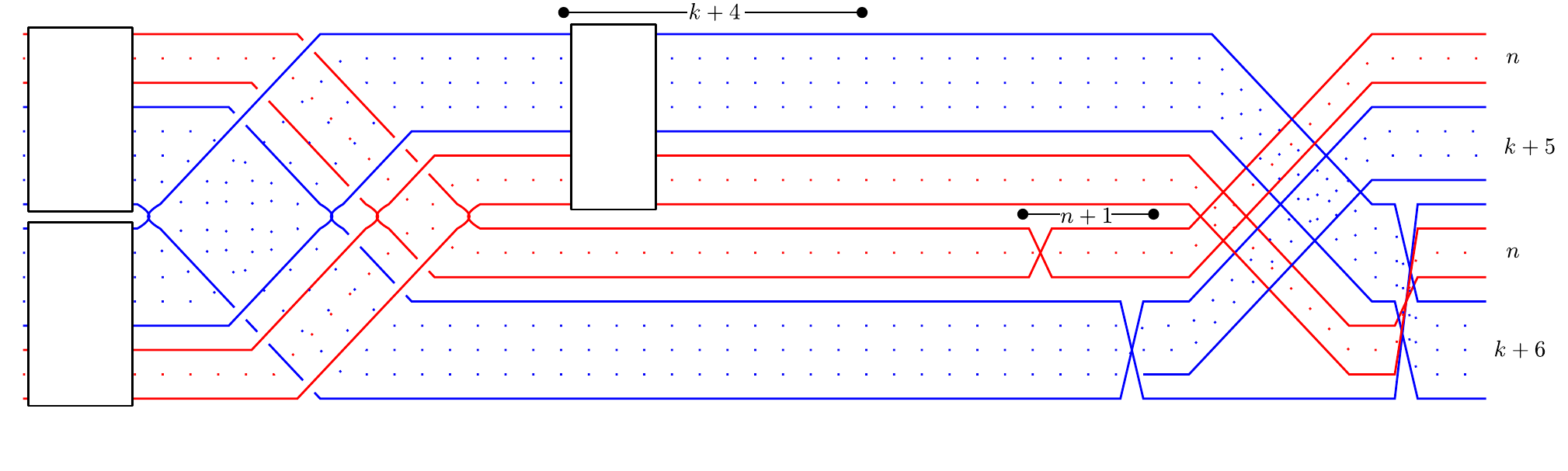}}%
    \put(0.37960059,0.22251052){\color[rgb]{0,0,0}\makebox(0,0)[lt]{\lineheight{1.25}\smash{\begin{tabular}[t]{l}$\I_s^1$\end{tabular}}}}%
    \put(0,0){\includegraphics[width=\unitlength,page=2]{Familieskeven5.pdf}}%
    \put(0.510881,0.22413798){\color[rgb]{0,0,0}\makebox(0,0)[lt]{\lineheight{1.25}\smash{\begin{tabular}[t]{l}$\I_s^1$\end{tabular}}}}%
    \put(0,0){\includegraphics[width=\unitlength,page=3]{Familieskeven5.pdf}}%
    \put(0.63911593,0.13355789){\color[rgb]{0,0,0}\makebox(0,0)[lt]{\lineheight{1.25}\smash{\begin{tabular}[t]{l}$\I^{s+1}_{s+n}$\end{tabular}}}}%
    \put(0,0){\includegraphics[width=\unitlength,page=4]{Familieskeven5.pdf}}%
    \put(0.71322666,0.13433258){\color[rgb]{0,0,0}\makebox(0,0)[lt]{\lineheight{1.25}\smash{\begin{tabular}[t]{l}$\I^{s+1}_{s+n}$\end{tabular}}}}%
    \put(0,0){\includegraphics[width=\unitlength,page=5]{Familieskeven5.pdf}}%
    \put(0.6920833,0.07080776){\color[rgb]{0,0,0}\makebox(0,0)[lt]{\lineheight{1.25}\smash{\begin{tabular}[t]{l}$\I_{2s}^{s+n+1}$\end{tabular}}}}%
    \put(0,0){\includegraphics[width=\unitlength,page=6]{Familieskeven5.pdf}}%
    \put(0.88965257,0.10775857){\color[rgb]{0,0,0}\makebox(0,0)[lt]{\lineheight{1.25}\smash{\begin{tabular}[t]{l}$\I_{2s}^{s}$\end{tabular}}}}%
    \put(0.02324085,0.21797601){\color[rgb]{0,0,0}\makebox(0,0)[lt]{\lineheight{1.25}\smash{\begin{tabular}[t]{l}$\Delta^{-k-4}$\end{tabular}}}}%
    \put(0.02243518,0.10177175){\color[rgb]{0,0,0}\makebox(0,0)[lt]{\lineheight{1.25}\smash{\begin{tabular}[t]{l}$\Delta^{k+4}$\end{tabular}}}}%
  \end{picture}%
\endgroup%

}
Using moves~\eqref{eq:move-multipts-lines} and~\eqref{eq:split-one}, combine the $k+4$ multipoints 
$\I^1_{s}$ down past the crossings with red strands, and combine each multipoint with the double point crossings of one additional blue strand, forming multipoints $\I^{n+2}_{s+n+2}, \dots, \I^{s-1}_{2s-1}$: 

{
	\fontsize{9pt}{5pt}
	\def\svgwidth{\textwidth}
	%
\begingroup%
  \makeatletter%
  \providecommand\color[2][]{%
    \errmessage{(Inkscape) Color is used for the text in Inkscape, but the package 'color.sty' is not loaded}%
    \renewcommand\color[2][]{}%
  }%
  \providecommand\transparent[1]{%
    \errmessage{(Inkscape) Transparency is used (non-zero) for the text in Inkscape, but the package 'transparent.sty' is not loaded}%
    \renewcommand\transparent[1]{}%
  }%
  \providecommand\rotatebox[2]{#2}%
  \newcommand*\fsize{\dimexpr\f@size pt\relax}%
  \newcommand*\lineheight[1]{\fontsize{\fsize}{#1\fsize}\selectfont}%
  \ifx\svgwidth\undefined%
    \setlength{\unitlength}{968.12283325bp}%
    \ifx\svgscale\undefined%
      \relax%
    \else%
      \setlength{\unitlength}{\unitlength * \real{\svgscale}}%
    \fi%
  \else%
    \setlength{\unitlength}{\svgwidth}%
  \fi%
  \global\let\svgwidth\undefined%
  \global\let\svgscale\undefined%
  \makeatother%
  \begin{picture}(1,0.29998003)%
    \lineheight{1}%
    \setlength\tabcolsep{0pt}%
    \put(0,0){\includegraphics[width=\unitlength,page=1]{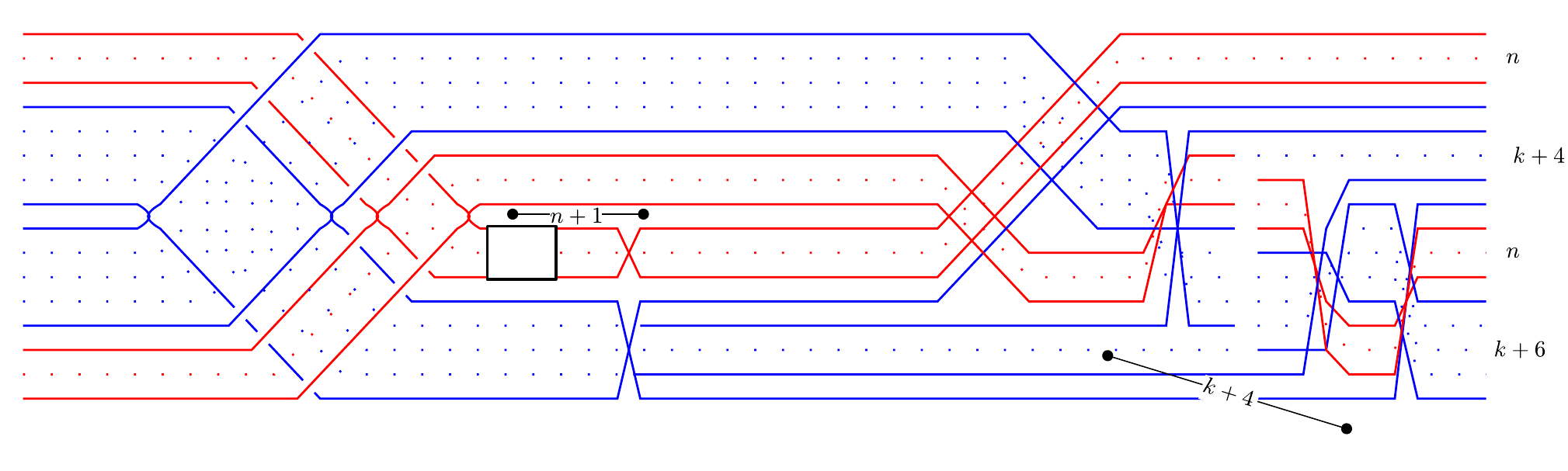}}%
    \put(0.31383334,0.13346856){\color[rgb]{0,0,0}\makebox(0,0)[lt]{\lineheight{1.25}\smash{\begin{tabular}[t]{l}$\I^{s+1}_{s+n}$\end{tabular}}}}%
    \put(0,0){\includegraphics[width=\unitlength,page=2]{Familieskeven6.pdf}}%
    \put(0.38742937,0.13346856){\color[rgb]{0,0,0}\makebox(0,0)[lt]{\lineheight{1.25}\smash{\begin{tabular}[t]{l}$\I^{s+1}_{s+n}$\end{tabular}}}}%
    \put(0,0){\includegraphics[width=\unitlength,page=3]{Familieskeven6.pdf}}%
    \put(0.3747134,0.07370826){\color[rgb]{0,0,0}\makebox(0,0)[lt]{\lineheight{1.25}\smash{\begin{tabular}[t]{l}$\I_{2s}^{s+n+1}$\end{tabular}}}}%
    \put(0,0){\includegraphics[width=\unitlength,page=4]{Familieskeven6.pdf}}%
    \put(0.89568485,0.10670875){\color[rgb]{0,0,0}\makebox(0,0)[lt]{\lineheight{1.25}\smash{\begin{tabular}[t]{l}$\I_{2s}^{s}$\end{tabular}}}}%
    \put(0,0){\includegraphics[width=\unitlength,page=5]{Familieskeven6.pdf}}%
    \put(0.81730331,0.12180016){\color[rgb]{0,0,0}\makebox(0,0)[lt]{\lineheight{1.25}\smash{\begin{tabular}[t]{l}$\I_{2s-1}^{s-1}$\end{tabular}}}}%
    \put(0,0){\includegraphics[width=\unitlength,page=6]{Familieskeven6.pdf}}%
    \put(0.72004868,0.14786549){\color[rgb]{0,0,0}\makebox(0,0)[lt]{\lineheight{1.25}\smash{\begin{tabular}[t]{l}$\I_{s+n+2}^{n+2}$\end{tabular}}}}%
    \put(0,0){\includegraphics[width=\unitlength,page=7]{Familieskeven6.pdf}}%
    \put(0.02324085,0.21797601){\color[rgb]{0,0,0}\makebox(0,0)[lt]{\lineheight{1.25}\smash{\begin{tabular}[t]{l}$\Delta^{-k-4}$\end{tabular}}}}%
    \put(0.02243518,0.10177175){\color[rgb]{0,0,0}\makebox(0,0)[lt]{\lineheight{1.25}\smash{\begin{tabular}[t]{l}$\Delta^{k+4}$\end{tabular}}}}%
  \end{picture}%
\endgroup%

}
Use move~\eqref{eq:move-multipts-lines} to slide $(n-1)$ copies of $\I^{s+1}_{s+n}$  to the right of the diagram, where they become multipoints $I^1_n$.  Then use move~\eqref{eq:move-multipt-tang} to slide one of the two remaining copies of $\I^{s+1}_{s+n}$ up through the tangency nest, creating additional braiding on the red strands, then combine this multipoint with the second copy of  $\I^{s+1}_{s+n}$ and all the double point crossings on the red strands  to make $\I^{s+1-n}_{s+n}$ via move~\eqref{eq:split-many}. Slide multipoint $\I^{s+n+1}_{2s}$ on the blue strands to the left under the red strands, where it becomes 
$\I^{s+1}_{2s-n}$:

{
	\fontsize{9pt}{5pt}
	\def\svgwidth{\textwidth}
	%
\begingroup%
  \makeatletter%
  \providecommand\color[2][]{%
    \errmessage{(Inkscape) Color is used for the text in Inkscape, but the package 'color.sty' is not loaded}%
    \renewcommand\color[2][]{}%
  }%
  \providecommand\transparent[1]{%
    \errmessage{(Inkscape) Transparency is used (non-zero) for the text in Inkscape, but the package 'transparent.sty' is not loaded}%
    \renewcommand\transparent[1]{}%
  }%
  \providecommand\rotatebox[2]{#2}%
  \newcommand*\fsize{\dimexpr\f@size pt\relax}%
  \newcommand*\lineheight[1]{\fontsize{\fsize}{#1\fsize}\selectfont}%
  \ifx\svgwidth\undefined%
    \setlength{\unitlength}{969bp}%
    \ifx\svgscale\undefined%
      \relax%
    \else%
      \setlength{\unitlength}{\unitlength * \real{\svgscale}}%
    \fi%
  \else%
    \setlength{\unitlength}{\svgwidth}%
  \fi%
  \global\let\svgwidth\undefined%
  \global\let\svgscale\undefined%
  \makeatother%
  \begin{picture}(1,0.3003096)%
    \lineheight{1}%
    \setlength\tabcolsep{0pt}%
    \put(0,0){\includegraphics[width=\unitlength,page=1]{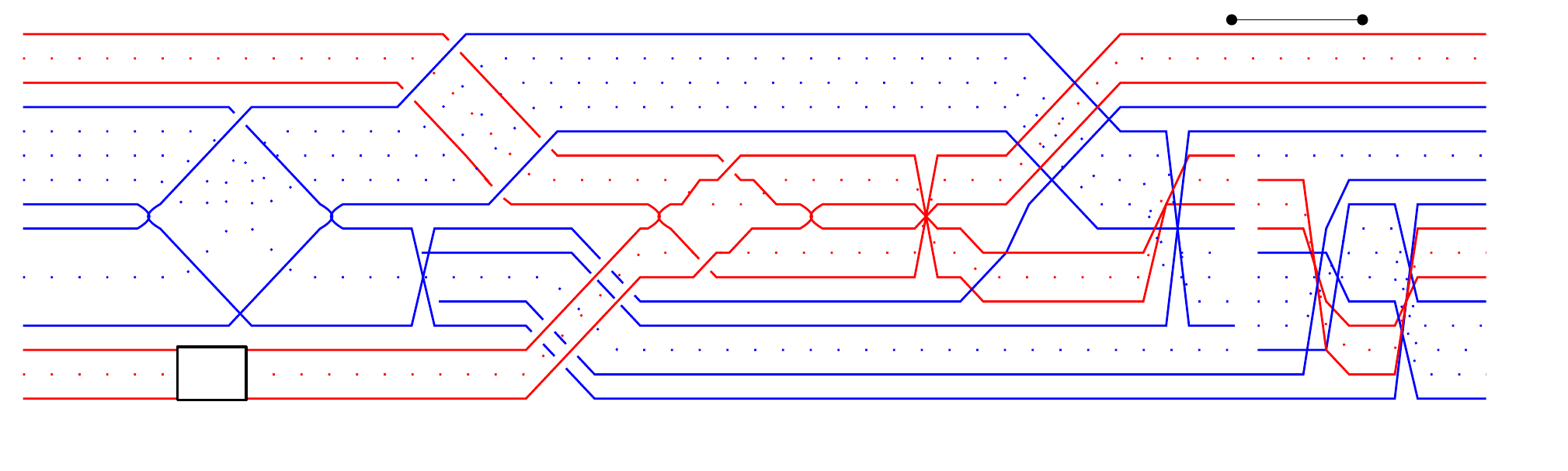}}%
    \put(0.1195517,0.05650155){\color[rgb]{0,0,0}\makebox(0,0)[lt]{\lineheight{1.25}\smash{\begin{tabular}[t]{l}$\Delta$\end{tabular}}}}%
    \put(0,0){\includegraphics[width=\unitlength,page=2]{Familieskeven7.pdf}}%
    \put(0.25415032,0.1177844){\color[rgb]{0,0,0}\makebox(0,0)[lt]{\lineheight{1.25}\smash{\begin{tabular}[t]{l}$\I_{2s-n}^{s+1}$\end{tabular}}}}%
    \put(0,0){\includegraphics[width=\unitlength,page=3]{Familieskeven7.pdf}}%
    \put(0.89009288,0.10681115){\color[rgb]{0,0,0}\makebox(0,0)[lt]{\lineheight{1.25}\smash{\begin{tabular}[t]{l}$\I_{2s}^{s}$\end{tabular}}}}%
    \put(0,0){\includegraphics[width=\unitlength,page=4]{Familieskeven7.pdf}}%
    \put(0.82043344,0.12229102){\color[rgb]{0,0,0}\makebox(0,0)[lt]{\lineheight{1.25}\smash{\begin{tabular}[t]{l}$\I_{2s-1}^{s-1}$\end{tabular}}}}%
    \put(0,0){\includegraphics[width=\unitlength,page=5]{Familieskeven7.pdf}}%
    \put(0.72190899,0.15332658){\color[rgb]{0,0,0}\makebox(0,0)[lt]{\lineheight{1.25}\smash{\begin{tabular}[t]{l}$\I_{s+n+2}^{n+2}$\end{tabular}}}}%
    \put(0,0){\includegraphics[width=\unitlength,page=6]{Familieskeven7.pdf}}%
    \put(0.78560372,0.25773994){\color[rgb]{0,0,0}\makebox(0,0)[lt]{\lineheight{1.25}\smash{\begin{tabular}[t]{l}$\I^{1}_{n}$\end{tabular}}}}%
    \put(0,0){\includegraphics[width=\unitlength,page=7]{Familieskeven7.pdf}}%
    \put(0.11609907,0.25773994){\color[rgb]{0,0,0}\makebox(0,0)[lt]{\lineheight{1.25}\smash{\begin{tabular}[t]{l}$\Delta^{-1}$\end{tabular}}}}%
    \put(0,0){\includegraphics[width=\unitlength,page=8]{Familieskeven7.pdf}}%
    \put(0.8520779,0.25782919){\color[rgb]{0,0,0}\makebox(0,0)[lt]{\lineheight{1.25}\smash{\begin{tabular}[t]{l}$\I^{1}_{n}$\end{tabular}}}}%
    \put(0,0){\includegraphics[width=\unitlength,page=9]{Familieskeven7.pdf}}%
    \put(0.53601358,0.1590712){\color[rgb]{0,0,0}\makebox(0,0)[lt]{\lineheight{1.25}\smash{\begin{tabular}[t]{l}$\I^{s+1-n}_{s+n}$\end{tabular}}}}%
    \put(0,0){\includegraphics[width=\unitlength,page=10]{Familieskeven7.pdf}}%
    \put(0.02459334,0.21837981){\color[rgb]{0,0,0}\makebox(0,0)[lt]{\lineheight{1.25}\smash{\begin{tabular}[t]{l}$\Delta^{-k-4}$\end{tabular}}}}%
    \put(0.0237884,0.10228074){\color[rgb]{0,0,0}\makebox(0,0)[lt]{\lineheight{1.25}\smash{\begin{tabular}[t]{l}$\Delta^{k+4}$\end{tabular}}}}%
  \end{picture}%
\endgroup%

}
Apply move~\eqref{eq:move-multipt-tang} to slide $\I^{s+1}_{2s-n}$ up through the tangency nest on blue strands; the result is $\I^{n+1}_s$ and the extra braiding on the left:  

{
	\fontsize{9pt}{5pt}
	\def\svgwidth{\textwidth}
	%
\begingroup%
  \makeatletter%
  \providecommand\color[2][]{%
    \errmessage{(Inkscape) Color is used for the text in Inkscape, but the package 'color.sty' is not loaded}%
    \renewcommand\color[2][]{}%
  }%
  \providecommand\transparent[1]{%
    \errmessage{(Inkscape) Transparency is used (non-zero) for the text in Inkscape, but the package 'transparent.sty' is not loaded}%
    \renewcommand\transparent[1]{}%
  }%
  \providecommand\rotatebox[2]{#2}%
  \newcommand*\fsize{\dimexpr\f@size pt\relax}%
  \newcommand*\lineheight[1]{\fontsize{\fsize}{#1\fsize}\selectfont}%
  \ifx\svgwidth\undefined%
    \setlength{\unitlength}{1050bp}%
    \ifx\svgscale\undefined%
      \relax%
    \else%
      \setlength{\unitlength}{\unitlength * \real{\svgscale}}%
    \fi%
  \else%
    \setlength{\unitlength}{\svgwidth}%
  \fi%
  \global\let\svgwidth\undefined%
  \global\let\svgscale\undefined%
  \makeatother%
  \begin{picture}(1,0.26)%
    \lineheight{1}%
    \setlength\tabcolsep{0pt}%
    \put(0,0){\includegraphics[width=\unitlength,page=1]{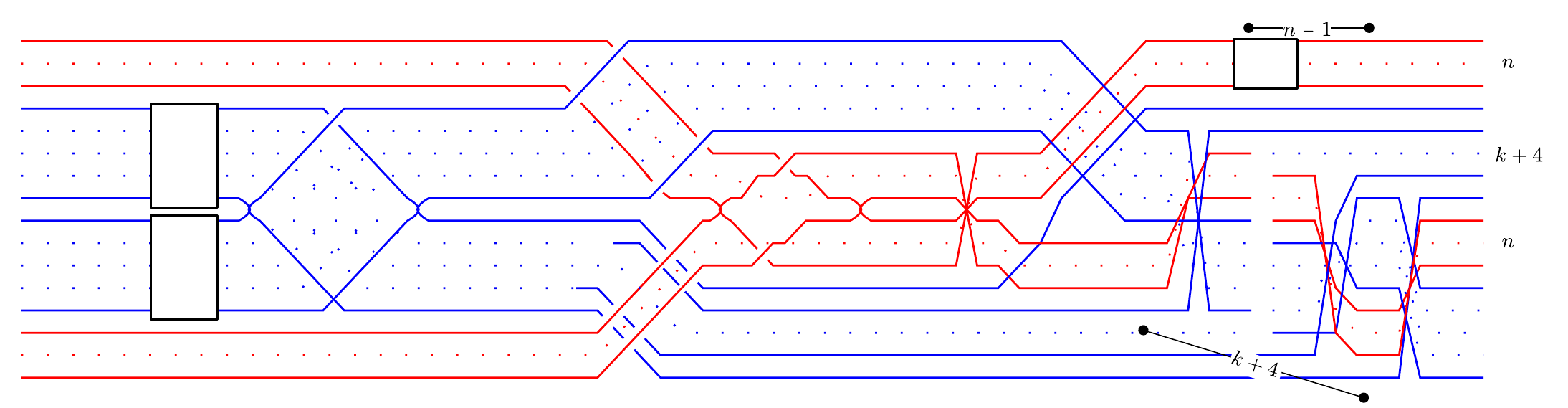}}%
    \put(0.79682079,0.21436808){\color[rgb]{0,0,0}\makebox(0,0)[lt]{\lineheight{1.25}\smash{\begin{tabular}[t]{l}$\I^{1}_{n}$\end{tabular}}}}%
    \put(0,0){\includegraphics[width=\unitlength,page=2]{Familieskeven8.pdf}}%
    \put(0.85777479,0.21436808){\color[rgb]{0,0,0}\makebox(0,0)[lt]{\lineheight{1.25}\smash{\begin{tabular}[t]{l}$\I^{1}_{n}$\end{tabular}}}}%
    \put(0,0){\includegraphics[width=\unitlength,page=3]{Familieskeven8.pdf}}%
    \put(0.89369807,0.07465682){\color[rgb]{0,0,0}\makebox(0,0)[lt]{\lineheight{1.25}\smash{\begin{tabular}[t]{l}$\I_{2s}^{s}$\end{tabular}}}}%
    \put(0,0){\includegraphics[width=\unitlength,page=4]{Familieskeven8.pdf}}%
    \put(0.825,0.09214286){\color[rgb]{0,0,0}\makebox(0,0)[lt]{\lineheight{1.25}\smash{\begin{tabular}[t]{l}$\I_{2s-1}^{s-1}$\end{tabular}}}}%
    \put(0,0){\includegraphics[width=\unitlength,page=5]{Familieskeven8.pdf}}%
    \put(0.56660342,0.12359748){\color[rgb]{0,0,0}\makebox(0,0)[lt]{\lineheight{1.25}\smash{\begin{tabular}[t]{l}$\I^{s+1-n}_{s+n}$\end{tabular}}}}%
    \put(0,0){\includegraphics[width=\unitlength,page=6]{Familieskeven8.pdf}}%
    \put(0.28388252,0.15941246){\color[rgb]{0,0,0}\makebox(0,0)[lt]{\lineheight{1.25}\smash{\begin{tabular}[t]{l}$\I_{s}^{n+1}$\end{tabular}}}}%
    \put(0,0){\includegraphics[width=\unitlength,page=7]{Familieskeven8.pdf}}%
    \put(0.73357143,0.12078425){\color[rgb]{0,0,0}\makebox(0,0)[lt]{\lineheight{1.25}\smash{\begin{tabular}[t]{l}$\I_{s+n+2}^{n+2}$\end{tabular}}}}%
    \put(0.09894994,0.1593905){\color[rgb]{0,0,0}\makebox(0,0)[lt]{\lineheight{1.25}\smash{\begin{tabular}[t]{l}$\Delta^{-1}$\end{tabular}}}}%
    \put(0.09894994,0.08796193){\color[rgb]{0,0,0}\makebox(0,0)[lt]{\lineheight{1.25}\smash{\begin{tabular}[t]{l}$\Delta$\end{tabular}}}}%
    \put(0,0){\includegraphics[width=\unitlength,page=8]{Familieskeven8.pdf}}%
    \put(0.1,0.02869801){\color[rgb]{0,0,0}\makebox(0,0)[lt]{\lineheight{1.25}\smash{\begin{tabular}[t]{l}$\Delta$\end{tabular}}}}%
    \put(0,0){\includegraphics[width=\unitlength,page=9]{Familieskeven8.pdf}}%
    \put(0.09961486,0.21441231){\color[rgb]{0,0,0}\makebox(0,0)[lt]{\lineheight{1.25}\smash{\begin{tabular}[t]{l}$\Delta^{-1}$\end{tabular}}}}%
    \put(0,0){\includegraphics[width=\unitlength,page=10]{Familieskeven8.pdf}}%
    \put(0.02338599,0.17724766){\color[rgb]{0,0,0}\makebox(0,0)[lt]{\lineheight{1.25}\smash{\begin{tabular}[t]{l}$\Delta^{-k-4}$\end{tabular}}}}%
    \put(0.02264315,0.07010481){\color[rgb]{0,0,0}\makebox(0,0)[lt]{\lineheight{1.25}\smash{\begin{tabular}[t]{l}$\Delta^{k+4}$\end{tabular}}}}%
  \end{picture}%
\endgroup%

}
Slide $\I^{n+1}_s$ over the crossings with red strands; it becomes $\I^1_{k+6}$. Use move~\eqref{eq:swap-tang-crossing}  to switch $\I^{s+1-n}_{s+n}$ to the left of the tangency nest on the red strands:

{
	\fontsize{9pt}{5pt}
	\def\svgwidth{\textwidth}
	%
\begingroup%
  \makeatletter%
  \providecommand\color[2][]{%
    \errmessage{(Inkscape) Color is used for the text in Inkscape, but the package 'color.sty' is not loaded}%
    \renewcommand\color[2][]{}%
  }%
  \providecommand\transparent[1]{%
    \errmessage{(Inkscape) Transparency is used (non-zero) for the text in Inkscape, but the package 'transparent.sty' is not loaded}%
    \renewcommand\transparent[1]{}%
  }%
  \providecommand\rotatebox[2]{#2}%
  \newcommand*\fsize{\dimexpr\f@size pt\relax}%
  \newcommand*\lineheight[1]{\fontsize{\fsize}{#1\fsize}\selectfont}%
  \ifx\svgwidth\undefined%
    \setlength{\unitlength}{1050bp}%
    \ifx\svgscale\undefined%
      \relax%
    \else%
      \setlength{\unitlength}{\unitlength * \real{\svgscale}}%
    \fi%
  \else%
    \setlength{\unitlength}{\svgwidth}%
  \fi%
  \global\let\svgwidth\undefined%
  \global\let\svgscale\undefined%
  \makeatother%
  \begin{picture}(1,0.26)%
    \lineheight{1}%
    \setlength\tabcolsep{0pt}%
    \put(0,0){\includegraphics[width=\unitlength,page=1]{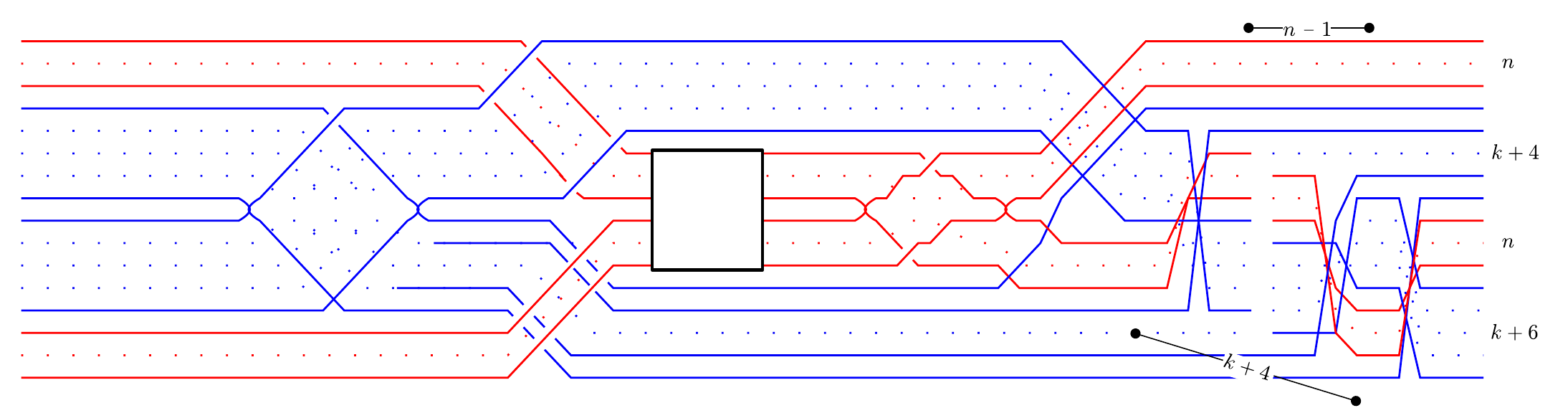}}%
    \put(0.42142857,0.12428571){\color[rgb]{0,0,0}\makebox(0,0)[lt]{\lineheight{1.25}\smash{\begin{tabular}[t]{l}$\I^{s+1-n}_{s+n}$\end{tabular}}}}%
    \put(0,0){\includegraphics[width=\unitlength,page=2]{Familieskeven9.pdf}}%
    \put(0.58214286,0.20285714){\color[rgb]{0,0,0}\makebox(0,0)[lt]{\lineheight{1.25}\smash{\begin{tabular}[t]{l}$\I_{k+6}^{1}$\end{tabular}}}}%
    \put(0,0){\includegraphics[width=\unitlength,page=3]{Familieskeven9.pdf}}%
    \put(0.79682079,0.21436808){\color[rgb]{0,0,0}\makebox(0,0)[lt]{\lineheight{1.25}\smash{\begin{tabular}[t]{l}$\I^{1}_{n}$\end{tabular}}}}%
    \put(0,0){\includegraphics[width=\unitlength,page=4]{Familieskeven9.pdf}}%
    \put(0.85714287,0.21436808){\color[rgb]{0,0,0}\makebox(0,0)[lt]{\lineheight{1.25}\smash{\begin{tabular}[t]{l}$\I^{1}_{n}$\end{tabular}}}}%
    \put(0,0){\includegraphics[width=\unitlength,page=5]{Familieskeven9.pdf}}%
    \put(0.88928571,0.07785714){\color[rgb]{0,0,0}\makebox(0,0)[lt]{\lineheight{1.25}\smash{\begin{tabular}[t]{l}$\I_{2s}^{s}$\end{tabular}}}}%
    \put(0,0){\includegraphics[width=\unitlength,page=6]{Familieskeven9.pdf}}%
    \put(0.82142857,0.08927175){\color[rgb]{0,0,0}\makebox(0,0)[lt]{\lineheight{1.25}\smash{\begin{tabular}[t]{l}$\I_{2s-1}^{s-1}$\end{tabular}}}}%
    \put(0,0){\includegraphics[width=\unitlength,page=7]{Familieskeven9.pdf}}%
    \put(0.73277631,0.11722081){\color[rgb]{0,0,0}\makebox(0,0)[lt]{\lineheight{1.25}\smash{\begin{tabular}[t]{l}$\I_{s+n+2}^{n+2}$\end{tabular}}}}%
    \put(0,0){\includegraphics[width=\unitlength,page=8]{Familieskeven9.pdf}}%
    \put(0.09854329,0.15986522){\color[rgb]{0,0,0}\makebox(0,0)[lt]{\lineheight{1.25}\smash{\begin{tabular}[t]{l}$\Delta^{-1}$\end{tabular}}}}%
    \put(0.09854329,0.08843665){\color[rgb]{0,0,0}\makebox(0,0)[lt]{\lineheight{1.25}\smash{\begin{tabular}[t]{l}$\Delta$\end{tabular}}}}%
    \put(0,0){\includegraphics[width=\unitlength,page=9]{Familieskeven9.pdf}}%
    \put(0.09880627,0.02869801){\color[rgb]{0,0,0}\makebox(0,0)[lt]{\lineheight{1.25}\smash{\begin{tabular}[t]{l}$\Delta$\end{tabular}}}}%
    \put(0,0){\includegraphics[width=\unitlength,page=10]{Familieskeven9.pdf}}%
    \put(0.09842113,0.21441231){\color[rgb]{0,0,0}\makebox(0,0)[lt]{\lineheight{1.25}\smash{\begin{tabular}[t]{l}$\Delta^{-1}$\end{tabular}}}}%
    \put(0,0){\includegraphics[width=\unitlength,page=11]{Familieskeven9.pdf}}%
    \put(0.02626757,0.17724766){\color[rgb]{0,0,0}\makebox(0,0)[lt]{\lineheight{1.25}\smash{\begin{tabular}[t]{l}$\Delta^{-k-4}$\end{tabular}}}}%
    \put(0.02552472,0.07010481){\color[rgb]{0,0,0}\makebox(0,0)[lt]{\lineheight{1.25}\smash{\begin{tabular}[t]{l}$\Delta^{k+4}$\end{tabular}}}}%
  \end{picture}%
\endgroup%

}

Combine $\I^1_{k+6}$ and one of the copies of $\I^1_n$ with double points of the $n$ red strands, 
using move~\eqref{eq:split-many} to create  $\I^1_s$:

{
	\fontsize{9pt}{5pt}
	\def\svgwidth{\textwidth}
	%
\begingroup%
  \makeatletter%
  \providecommand\color[2][]{%
    \errmessage{(Inkscape) Color is used for the text in Inkscape, but the package 'color.sty' is not loaded}%
    \renewcommand\color[2][]{}%
  }%
  \providecommand\transparent[1]{%
    \errmessage{(Inkscape) Transparency is used (non-zero) for the text in Inkscape, but the package 'transparent.sty' is not loaded}%
    \renewcommand\transparent[1]{}%
  }%
  \providecommand\rotatebox[2]{#2}%
  \newcommand*\fsize{\dimexpr\f@size pt\relax}%
  \newcommand*\lineheight[1]{\fontsize{\fsize}{#1\fsize}\selectfont}%
  \ifx\svgwidth\undefined%
    \setlength{\unitlength}{1050bp}%
    \ifx\svgscale\undefined%
      \relax%
    \else%
      \setlength{\unitlength}{\unitlength * \real{\svgscale}}%
    \fi%
  \else%
    \setlength{\unitlength}{\svgwidth}%
  \fi%
  \global\let\svgwidth\undefined%
  \global\let\svgscale\undefined%
  \makeatother%
  \begin{picture}(1,0.26)%
    \lineheight{1}%
    \setlength\tabcolsep{0pt}%
    \put(0,0){\includegraphics[width=\unitlength,page=1]{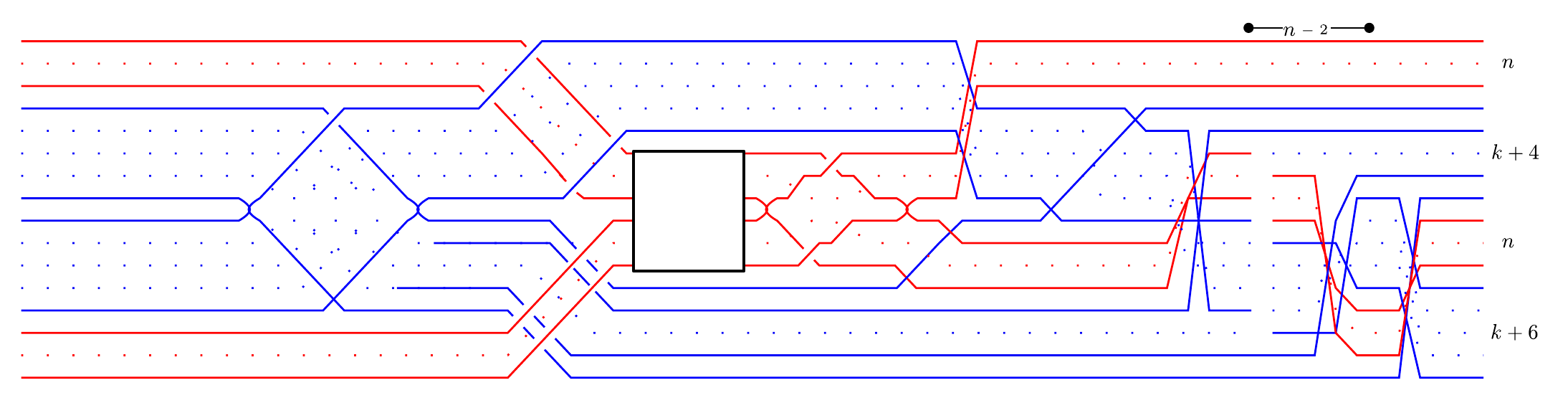}}%
    \put(0.40946057,0.12359748){\color[rgb]{0,0,0}\makebox(0,0)[lt]{\lineheight{1.25}\smash{\begin{tabular}[t]{l}$\I^{s+1-n}_{s+n}$\end{tabular}}}}%
    \put(0,0){\includegraphics[width=\unitlength,page=2]{Familieskeven10.pdf}}%
    \put(0.61032915,0.17785714){\color[rgb]{0,0,0}\makebox(0,0)[lt]{\lineheight{1.25}\smash{\begin{tabular}[t]{l}$\I_{s}^{1}$\end{tabular}}}}%
    \put(0,0){\includegraphics[width=\unitlength,page=3]{Familieskeven10.pdf}}%
    \put(0.79563193,0.21436808){\color[rgb]{0,0,0}\makebox(0,0)[lt]{\lineheight{1.25}\smash{\begin{tabular}[t]{l}$\I^{1}_{n}$\end{tabular}}}}%
    \put(0,0){\includegraphics[width=\unitlength,page=4]{Familieskeven10.pdf}}%
    \put(0.85714287,0.21436808){\color[rgb]{0,0,0}\makebox(0,0)[lt]{\lineheight{1.25}\smash{\begin{tabular}[t]{l}$\I^{1}_{n}$\end{tabular}}}}%
    \put(0,0){\includegraphics[width=\unitlength,page=5]{Familieskeven10.pdf}}%
    \put(0.88928571,0.07785714){\color[rgb]{0,0,0}\makebox(0,0)[lt]{\lineheight{1.25}\smash{\begin{tabular}[t]{l}$\I_{2s}^{s}$\end{tabular}}}}%
    \put(0,0){\includegraphics[width=\unitlength,page=6]{Familieskeven10.pdf}}%
    \put(0.82142857,0.08927176){\color[rgb]{0,0,0}\makebox(0,0)[lt]{\lineheight{1.25}\smash{\begin{tabular}[t]{l}$\I_{2s-1}^{s-1}$\end{tabular}}}}%
    \put(0,0){\includegraphics[width=\unitlength,page=7]{Familieskeven10.pdf}}%
    \put(0.73277631,0.11722081){\color[rgb]{0,0,0}\makebox(0,0)[lt]{\lineheight{1.25}\smash{\begin{tabular}[t]{l}$\I_{s+n+2}^{n+2}$\end{tabular}}}}%
    \put(0,0){\includegraphics[width=\unitlength,page=8]{Familieskeven10.pdf}}%
    \put(0.10211472,0.16343665){\color[rgb]{0,0,0}\makebox(0,0)[lt]{\lineheight{1.25}\smash{\begin{tabular}[t]{l}$\Delta^{-1}$\end{tabular}}}}%
    \put(0.10211472,0.09200808){\color[rgb]{0,0,0}\makebox(0,0)[lt]{\lineheight{1.25}\smash{\begin{tabular}[t]{l}$\Delta$\end{tabular}}}}%
    \put(0,0){\includegraphics[width=\unitlength,page=9]{Familieskeven10.pdf}}%
    \put(0.1023777,0.02865378){\color[rgb]{0,0,0}\makebox(0,0)[lt]{\lineheight{1.25}\smash{\begin{tabular}[t]{l}$\Delta$\end{tabular}}}}%
    \put(0,0){\includegraphics[width=\unitlength,page=10]{Familieskeven10.pdf}}%
    \put(0.10199256,0.21436808){\color[rgb]{0,0,0}\makebox(0,0)[lt]{\lineheight{1.25}\smash{\begin{tabular}[t]{l}$\Delta^{-1}$\end{tabular}}}}%
    \put(0,0){\includegraphics[width=\unitlength,page=11]{Familieskeven10.pdf}}%
    \put(0.02697645,0.17724766){\color[rgb]{0,0,0}\makebox(0,0)[lt]{\lineheight{1.25}\smash{\begin{tabular}[t]{l}$\Delta^{-k-4}$\end{tabular}}}}%
    \put(0.02623361,0.07010481){\color[rgb]{0,0,0}\makebox(0,0)[lt]{\lineheight{1.25}\smash{\begin{tabular}[t]{l}$\Delta^{k+4}$\end{tabular}}}}%
  \end{picture}%
\endgroup%

}

Using~\eqref{eq:move-strand-tang}, move (blue) strand $s+n+1$  that has double point intersections  with $n$ (red) strands $s+1, \dots s+n$ located to the right of the red tangency nest:   

{
	\fontsize{9pt}{5pt}
	\def\svgwidth{\textwidth}
	%
\begingroup%
  \makeatletter%
  \providecommand\color[2][]{%
    \errmessage{(Inkscape) Color is used for the text in Inkscape, but the package 'color.sty' is not loaded}%
    \renewcommand\color[2][]{}%
  }%
  \providecommand\transparent[1]{%
    \errmessage{(Inkscape) Transparency is used (non-zero) for the text in Inkscape, but the package 'transparent.sty' is not loaded}%
    \renewcommand\transparent[1]{}%
  }%
  \providecommand\rotatebox[2]{#2}%
  \newcommand*\fsize{\dimexpr\f@size pt\relax}%
  \newcommand*\lineheight[1]{\fontsize{\fsize}{#1\fsize}\selectfont}%
  \ifx\svgwidth\undefined%
    \setlength{\unitlength}{1050bp}%
    \ifx\svgscale\undefined%
      \relax%
    \else%
      \setlength{\unitlength}{\unitlength * \real{\svgscale}}%
    \fi%
  \else%
    \setlength{\unitlength}{\svgwidth}%
  \fi%
  \global\let\svgwidth\undefined%
  \global\let\svgscale\undefined%
  \makeatother%
  \begin{picture}(1,0.26)%
    \lineheight{1}%
    \setlength\tabcolsep{0pt}%
    \put(0,0){\includegraphics[width=\unitlength,page=1]{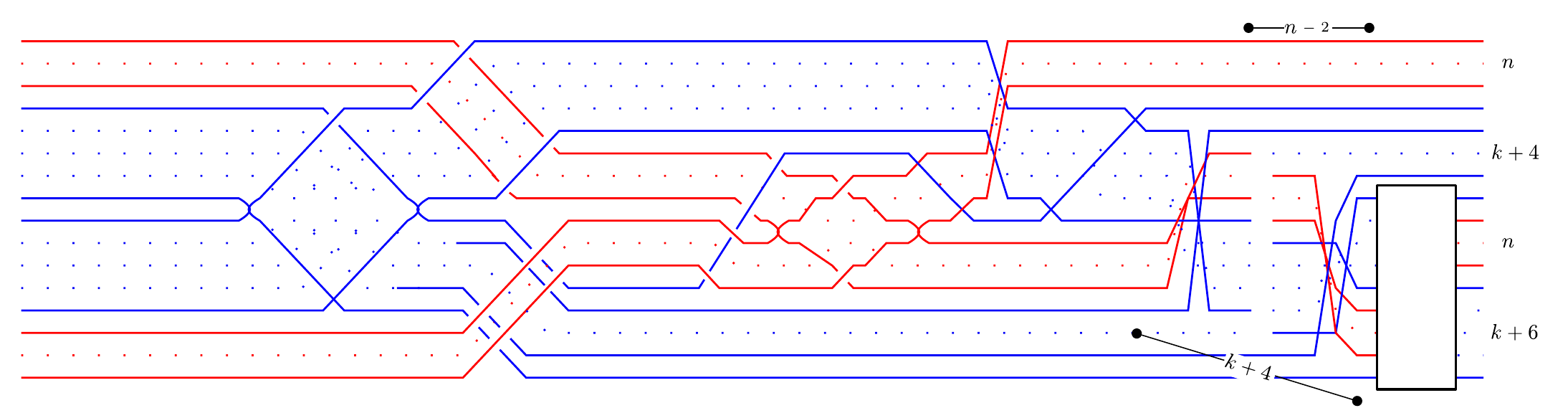}}%
    \put(0.89008084,0.07785714){\color[rgb]{0,0,0}\makebox(0,0)[lt]{\lineheight{1.25}\smash{\begin{tabular}[t]{l}$\I_{2s}^{s}$\end{tabular}}}}%
    \put(0,0){\includegraphics[width=\unitlength,page=2]{Familieskeven11.pdf}}%
    \put(0.82222369,0.08927175){\color[rgb]{0,0,0}\makebox(0,0)[lt]{\lineheight{1.25}\smash{\begin{tabular}[t]{l}$\I_{2s-1}^{s-1}$\end{tabular}}}}%
    \put(0,0){\includegraphics[width=\unitlength,page=3]{Familieskeven11.pdf}}%
    \put(0.73357144,0.11722081){\color[rgb]{0,0,0}\makebox(0,0)[lt]{\lineheight{1.25}\smash{\begin{tabular}[t]{l}$\I_{s+n+2}^{n+2}$\end{tabular}}}}%
    \put(0,0){\includegraphics[width=\unitlength,page=4]{Familieskeven11.pdf}}%
    \put(0.79642859,0.21436808){\color[rgb]{0,0,0}\makebox(0,0)[lt]{\lineheight{1.25}\smash{\begin{tabular}[t]{l}$\I^{1}_{n}$\end{tabular}}}}%
    \put(0,0){\includegraphics[width=\unitlength,page=5]{Familieskeven11.pdf}}%
    \put(0.85793952,0.21436808){\color[rgb]{0,0,0}\makebox(0,0)[lt]{\lineheight{1.25}\smash{\begin{tabular}[t]{l}$\I^{1}_{n}$\end{tabular}}}}%
    \put(0,0){\includegraphics[width=\unitlength,page=6]{Familieskeven11.pdf}}%
    \put(0.63449084,0.17778467){\color[rgb]{0,0,0}\makebox(0,0)[lt]{\lineheight{1.25}\smash{\begin{tabular}[t]{l}$\I_{s}^{1}$\end{tabular}}}}%
    \put(0,0){\includegraphics[width=\unitlength,page=7]{Familieskeven11.pdf}}%
    \put(0.37323539,0.12359748){\color[rgb]{0,0,0}\makebox(0,0)[lt]{\lineheight{1.25}\smash{\begin{tabular}[t]{l}$\I^{s+1-n}_{s+n}$\end{tabular}}}}%
    \put(0,0){\includegraphics[width=\unitlength,page=8]{Familieskeven11.pdf}}%
    \put(0.10211472,0.16343665){\color[rgb]{0,0,0}\makebox(0,0)[lt]{\lineheight{1.25}\smash{\begin{tabular}[t]{l}$\Delta^{-1}$\end{tabular}}}}%
    \put(0.10211472,0.09200808){\color[rgb]{0,0,0}\makebox(0,0)[lt]{\lineheight{1.25}\smash{\begin{tabular}[t]{l}$\Delta$\end{tabular}}}}%
    \put(0,0){\includegraphics[width=\unitlength,page=9]{Familieskeven11.pdf}}%
    \put(0.10357143,0.02869801){\color[rgb]{0,0,0}\makebox(0,0)[lt]{\lineheight{1.25}\smash{\begin{tabular}[t]{l}$\Delta$\end{tabular}}}}%
    \put(0,0){\includegraphics[width=\unitlength,page=10]{Familieskeven11.pdf}}%
    \put(0.10318629,0.21441231){\color[rgb]{0,0,0}\makebox(0,0)[lt]{\lineheight{1.25}\smash{\begin{tabular}[t]{l}$\Delta^{-1}$\end{tabular}}}}%
    \put(0,0){\includegraphics[width=\unitlength,page=11]{Familieskeven11.pdf}}%
    \put(0.02626757,0.17724766){\color[rgb]{0,0,0}\makebox(0,0)[lt]{\lineheight{1.25}\smash{\begin{tabular}[t]{l}$\Delta^{-k-4}$\end{tabular}}}}%
    \put(0.02552472,0.07010481){\color[rgb]{0,0,0}\makebox(0,0)[lt]{\lineheight{1.25}\smash{\begin{tabular}[t]{l}$\Delta^{k+4}$\end{tabular}}}}%
  \end{picture}%
\endgroup%

}

Use~\eqref{eq:split-one-middle} to combine double points from this last blue strand with $\I^1_{s}$ into $\I^1_{s+1}$:

{
	\fontsize{9pt}{5pt}
	\def\svgwidth{\textwidth}
	%
\begingroup%
  \makeatletter%
  \providecommand\color[2][]{%
    \errmessage{(Inkscape) Color is used for the text in Inkscape, but the package 'color.sty' is not loaded}%
    \renewcommand\color[2][]{}%
  }%
  \providecommand\transparent[1]{%
    \errmessage{(Inkscape) Transparency is used (non-zero) for the text in Inkscape, but the package 'transparent.sty' is not loaded}%
    \renewcommand\transparent[1]{}%
  }%
  \providecommand\rotatebox[2]{#2}%
  \newcommand*\fsize{\dimexpr\f@size pt\relax}%
  \newcommand*\lineheight[1]{\fontsize{\fsize}{#1\fsize}\selectfont}%
  \ifx\svgwidth\undefined%
    \setlength{\unitlength}{1050bp}%
    \ifx\svgscale\undefined%
      \relax%
    \else%
      \setlength{\unitlength}{\unitlength * \real{\svgscale}}%
    \fi%
  \else%
    \setlength{\unitlength}{\svgwidth}%
  \fi%
  \global\let\svgwidth\undefined%
  \global\let\svgscale\undefined%
  \makeatother%
  \begin{picture}(1,0.26)%
    \lineheight{1}%
    \setlength\tabcolsep{0pt}%
    \put(0,0){\includegraphics[width=\unitlength,page=1]{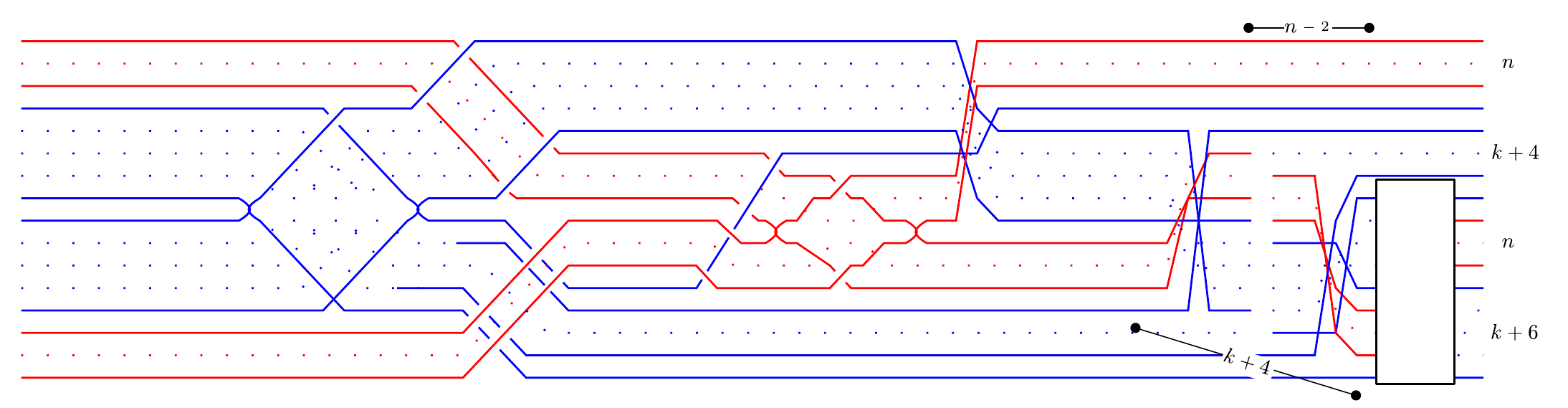}}%
    \put(0.88928572,0.08142857){\color[rgb]{0,0,0}\makebox(0,0)[lt]{\lineheight{1.25}\smash{\begin{tabular}[t]{l}$\I_{2s}^{s}$\end{tabular}}}}%
    \put(0,0){\includegraphics[width=\unitlength,page=2]{Familieskeven12.pdf}}%
    \put(0.82142857,0.09284318){\color[rgb]{0,0,0}\makebox(0,0)[lt]{\lineheight{1.25}\smash{\begin{tabular}[t]{l}$\I_{2s-1}^{s-1}$\end{tabular}}}}%
    \put(0,0){\includegraphics[width=\unitlength,page=3]{Familieskeven12.pdf}}%
    \put(0.73277631,0.12079224){\color[rgb]{0,0,0}\makebox(0,0)[lt]{\lineheight{1.25}\smash{\begin{tabular}[t]{l}$\I_{s+n+2}^{n+2}$\end{tabular}}}}%
    \put(0,0){\includegraphics[width=\unitlength,page=4]{Familieskeven12.pdf}}%
    \put(0.61032915,0.17071429){\color[rgb]{0,0,0}\makebox(0,0)[lt]{\lineheight{1.25}\smash{\begin{tabular}[t]{l}$\I_{s+1}^{1}$\end{tabular}}}}%
    \put(0,0){\includegraphics[width=\unitlength,page=5]{Familieskeven12.pdf}}%
    \put(0.79682079,0.21436808){\color[rgb]{0,0,0}\makebox(0,0)[lt]{\lineheight{1.25}\smash{\begin{tabular}[t]{l}$\I^{1}_{n}$\end{tabular}}}}%
    \put(0,0){\includegraphics[width=\unitlength,page=6]{Familieskeven12.pdf}}%
    \put(0.85833172,0.21436808){\color[rgb]{0,0,0}\makebox(0,0)[lt]{\lineheight{1.25}\smash{\begin{tabular}[t]{l}$\I^{1}_{n}$\end{tabular}}}}%
    \put(0,0){\includegraphics[width=\unitlength,page=7]{Familieskeven12.pdf}}%
    \put(0.37323539,0.12359748){\color[rgb]{0,0,0}\makebox(0,0)[lt]{\lineheight{1.25}\smash{\begin{tabular}[t]{l}$\I^{s+1-n}_{s+n}$\end{tabular}}}}%
    \put(0,0){\includegraphics[width=\unitlength,page=8]{Familieskeven12.pdf}}%
    \put(0.10353103,0.16343665){\color[rgb]{0,0,0}\makebox(0,0)[lt]{\lineheight{1.25}\smash{\begin{tabular}[t]{l}$\Delta^{-1}$\end{tabular}}}}%
    \put(0.10353103,0.09200808){\color[rgb]{0,0,0}\makebox(0,0)[lt]{\lineheight{1.25}\smash{\begin{tabular}[t]{l}$\Delta$\end{tabular}}}}%
    \put(0,0){\includegraphics[width=\unitlength,page=9]{Familieskeven12.pdf}}%
    \put(0.10498774,0.02869801){\color[rgb]{0,0,0}\makebox(0,0)[lt]{\lineheight{1.25}\smash{\begin{tabular}[t]{l}$\Delta$\end{tabular}}}}%
    \put(0,0){\includegraphics[width=\unitlength,page=10]{Familieskeven12.pdf}}%
    \put(0.1046026,0.21441231){\color[rgb]{0,0,0}\makebox(0,0)[lt]{\lineheight{1.25}\smash{\begin{tabular}[t]{l}$\Delta^{-1}$\end{tabular}}}}%
    \put(0,0){\includegraphics[width=\unitlength,page=11]{Familieskeven12.pdf}}%
    \put(0.02425715,0.1786312){\color[rgb]{0,0,0}\makebox(0,0)[lt]{\lineheight{1.25}\smash{\begin{tabular}[t]{l}$\Delta^{-k-4}$\end{tabular}}}}%
    \put(0.02351431,0.07148834){\color[rgb]{0,0,0}\makebox(0,0)[lt]{\lineheight{1.25}\smash{\begin{tabular}[t]{l}$\Delta^{k+4}$\end{tabular}}}}%
  \end{picture}%
\endgroup%

}

Use move~\eqref{eq:switch-multipts} to switch $\I^1_{s+1}$ with multipoints $\I^1_n$ on the right, moving these $(n-2)$ multipoints next to the red tangency nest:

{
	\fontsize{9pt}{5pt}
	\def\svgwidth{\textwidth}
    \import{./images/}{Familieskeven13.pdf_tex}

}

Move the red intersections through the red tangency nest using~\eqref{eq:TNIswap2}; this creates extra braiding $\Delta^{n-2}$. Using move~\eqref{eq:move-braid-multipt}, we can slide these half twists to the left side, past $\I^{s+1-n}_{s+n}$ and then under the blue strands. 

{
	\fontsize{9pt}{5pt}
	\def\svgwidth{\textwidth}
    \import{./images/}{Familieskeven14.pdf_tex}

}

We now focus on the subdiagram in the middle of the figure, formed by  the red strands with tangency nest, the multipoints $\I^{s+1-n}_{s+n}$ and $(n-2)$ copies of $\I^{k+7}_s$.  This subconfiguration is similar to the previous ones, and we can work with it separately: although the blue strand passing through the middle cannot be isotoped away, it does not interfere with any moves we need. 
The indices in the pictures  are shifted by $k+6$ for legibility. The subdiagram on the red strands is the Scott deformation of a star-shaped graph in $\mathcal C$, and the subdiagram we produce is the $\mathbb Q$HD arrangement for that singularity.
We break up the multipoint $\I^1_{2n}$ as $\I^n_{2n} \X^{1, n-1}_{n, 2n} \I^{n+2}_{2n}$ by~\eqref{eq:split-many}, and follow our previous strategy. Namely, combine $\I^{n+2}_{2n}$ with the strand above it and the corresponding double points to make $\I^{n+1}_{2n}$. Then, slide copies of $\I^1_{n}$ down through X one by one,  combining each with double points on one strand.   This creates the
 multipoints $\I^{n}_{2n}, \dots, \I^2_{n+2}$.  Finally, we slide $\I_{2n}^{n+1}$ up through the tangency nest, at the cost of extra braiding, and combine with the remaining strand.

\begin{center}
{
	\fontsize{9pt}{5pt}
	\def\svgwidth{0.5\textwidth}
    \import{./images/}{Familieskeven16.pdf_tex}

}
\end{center}

After incorporating this last diagram as a subconfiguration into the previous diagram on the red and blue strands and decorating all intersections with marked points,  we get a diagram representing DJVS arrangement that produces  a rational homology disk.  To get a simpler picture, we can move the braiding in the subconfiguration to the edge of the large diagram by~~\eqref{eq:move-braid-multipt}, and then cancel the braiding on the edges as in Subsection~\ref{ss:braid-at-edge} to arrive at the final diagram. \hfill\qedsymbol

\begin{center}
{
	\fontsize{9pt}{5pt}
	\def\svgwidth{\textwidth}
	%
\begingroup%
  \makeatletter%
  \providecommand\color[2][]{%
    \errmessage{(Inkscape) Color is used for the text in Inkscape, but the package 'color.sty' is not loaded}%
    \renewcommand\color[2][]{}%
  }%
  \providecommand\transparent[1]{%
    \errmessage{(Inkscape) Transparency is used (non-zero) for the text in Inkscape, but the package 'transparent.sty' is not loaded}%
    \renewcommand\transparent[1]{}%
  }%
  \providecommand\rotatebox[2]{#2}%
  \newcommand*\fsize{\dimexpr\f@size pt\relax}%
  \newcommand*\lineheight[1]{\fontsize{\fsize}{#1\fsize}\selectfont}%
  \ifx\svgwidth\undefined%
    \setlength{\unitlength}{930bp}%
    \ifx\svgscale\undefined%
      \relax%
    \else%
      \setlength{\unitlength}{\unitlength * \real{\svgscale}}%
    \fi%
  \else%
    \setlength{\unitlength}{\svgwidth}%
  \fi%
  \global\let\svgwidth\undefined%
  \global\let\svgscale\undefined%
  \makeatother%
  \begin{picture}(1,0.32258065)%
    \lineheight{1}%
    \setlength\tabcolsep{0pt}%
    \put(0,0){\includegraphics[width=\unitlength,page=1]{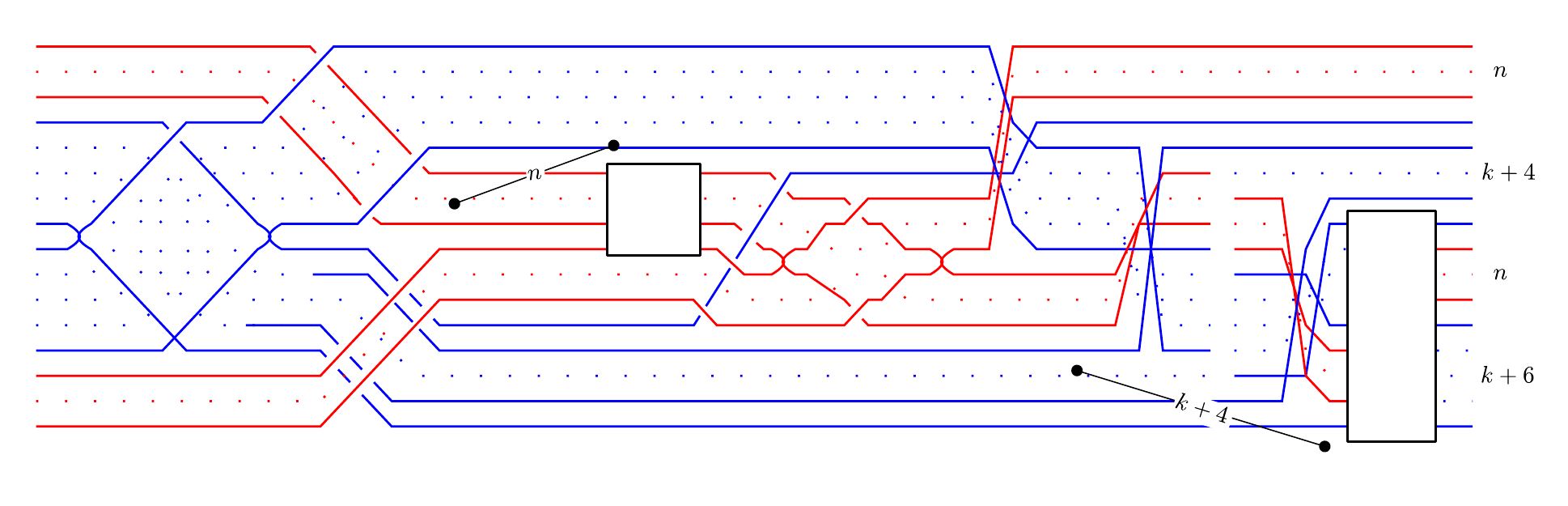}}%
    \put(0.87257287,0.11580132){\color[rgb]{0,0,0}\makebox(0,0)[lt]{\lineheight{1.25}\smash{\begin{tabular}[t]{l}$\I_{2s}^{s}$\end{tabular}}}}%
    \put(0,0){\includegraphics[width=\unitlength,page=2]{Familieskeven15.pdf}}%
    \put(0.79596001,0.13385522){\color[rgb]{0,0,0}\makebox(0,0)[lt]{\lineheight{1.25}\smash{\begin{tabular}[t]{l}$\I_{2s-1}^{s-1}$\end{tabular}}}}%
    \put(0,0){\includegraphics[width=\unitlength,page=3]{Familieskeven15.pdf}}%
    \put(0.69714582,0.16541059){\color[rgb]{0,0,0}\makebox(0,0)[lt]{\lineheight{1.25}\smash{\begin{tabular}[t]{l}$\I_{s+n+2}^{n+2}$\end{tabular}}}}%
    \put(0,0){\includegraphics[width=\unitlength,page=4]{Familieskeven15.pdf}}%
    \put(0.6233555,0.21956641){\color[rgb]{0,0,0}\makebox(0,0)[lt]{\lineheight{1.25}\smash{\begin{tabular}[t]{l}$\I_{s+1}^{1}$\end{tabular}}}}%
    \put(0,0){\includegraphics[width=\unitlength,page=5]{Familieskeven15.pdf}}%
    \put(0.29351615,0.15322581){\color[rgb]{0,0,0}\makebox(0,0)[lt]{\lineheight{1.25}\smash{\begin{tabular}[t]{l}$\I^{s}_{s+1+n}$\end{tabular}}}}%
    \put(0.3943226,0.18751612){\color[rgb]{0,0,0}\makebox(0,0)[lt]{\lineheight{1.25}\smash{\begin{tabular}[t]{l}$\I^{k+7}_{s+1}$\end{tabular}}}}%
  \end{picture}%
\endgroup%

}
\end{center}

\bibliography{references} 

@article {Ger,
    AUTHOR = {Gervais, Sylvain},
     TITLE = {A finite presentation of the mapping class group of a
              punctured surface},
   JOURNAL = {Topology},
    VOLUME = {40},
      YEAR = {2001},
    NUMBER = {4},
     PAGES = {703--725},
}

@book {BrKnor,
    AUTHOR = {Brieskorn, Egbert and Kn\"orrer, Horst},
     TITLE = {Plane algebraic curves},
    SERIES = {Modern Birkh\"auser Classics},
      NOTE = {Translated from the German original by John Stillwell,
              [2012] reprint of the 1986 edition},
 PUBLISHER = {Birkh\"auser/Springer Basel AG, Basel},
      YEAR = {1986},
}

@article {PS1,
    AUTHOR = {Plamenevskaya, Olga and Starkston, Laura},
     TITLE = {Unexpected {S}tein fillings, rational surface singularities
              and plane curve arrangements},
   JOURNAL = {Geom. Topol.},
  FJOURNAL = {Geometry \& Topology},
    VOLUME = {27},
      YEAR = {2023},
    NUMBER = {3},
     PAGES = {1083--1202},
     }

@article {Wahl,
    AUTHOR = {Wahl, Jonathan},
     TITLE = {Smoothings of normal surface singularities},
   JOURNAL = {Topology},
  FJOURNAL = {Topology. An International Journal of Mathematics},
    VOLUME = {20},
      YEAR = {1981},
    NUMBER = {3},
     PAGES = {219--246},
}

@article {FintushelStern,
    AUTHOR = {Fintushel, Ronald and Stern, Ronald J.},
     TITLE = {Rational blowdowns of smooth {$4$}-manifolds},
   JOURNAL = {J. Differential Geom.},
  FJOURNAL = {Journal of Differential Geometry},
    VOLUME = {46},
      YEAR = {1997},
    NUMBER = {2},
     PAGES = {181--235},
}

@article {Symington,
    AUTHOR = {Symington, Margaret},
     TITLE = {Symplectic rational blowdowns},
   JOURNAL = {J. Differential Geom.},
  FJOURNAL = {Journal of Differential Geometry},
    VOLUME = {50},
      YEAR = {1998},
    NUMBER = {3},
     PAGES = {505--518},
}

@article {JPark,
    AUTHOR = {Park, Jongil},
     TITLE = {Simply connected symplectic 4-manifolds with {$b^+_2=1$} and
              {$c^2_1=2$}},
   JOURNAL = {Invent. Math.},
  FJOURNAL = {Inventiones Mathematicae},
    VOLUME = {159},
      YEAR = {2005},
    NUMBER = {3},
     PAGES = {657--667},
}

@article {StipSzabo,
    AUTHOR = {Stipsicz, Andr\'as I. and Szab\'o, Zolt\'an},
     TITLE = {An exotic smooth structure on {$\Bbb C\Bbb
              P^2\#6\overline{\Bbb C\Bbb P^2}$}},
   JOURNAL = {Geom. Topol.},
  FJOURNAL = {Geometry and Topology},
    VOLUME = {9},
      YEAR = {2005},
     PAGES = {813--832},
}

@misc{Beke-inprep,
Author = {Beke,  M\'arton},
title = {Minimal rational graphs admitting a {$\mathbb{Q}$HD} smoothing},
year = {2025},
Note = {arXiv:2504.06929}}

@misc{LVHMW2,
      title={On symplectic fillings of spinal open book decompositions {II}: Holomorphic curves and classification}, 
      author={Samuel Lisi and Jeremy Van Horn-Morris and Chris Wendl},
      year={2020},
      eprint={2010.16330},
      archivePrefix={arXiv},
      primaryClass={math.SG},
      url={https://arxiv.org/abs/2010.16330}, 
}

@misc{LVHMW,
      title={On symplectic fillings of spinal open book decompositions {I}: Geometric constructions}, 
      author={Samuel Lisi and Jeremy Van Horn-Morris and Chris Wendl},
      year={2018},
      eprint={1810.12017},
      archivePrefix={arXiv},
      primaryClass={math.SG},
      url={https://arxiv.org/abs/1810.12017}, 
}

@misc{PS2,
Author = {Plamenevskaya,  Olga  and Starkston, Laura},
Title = {Sandwiched singularities and nearly {L}efschetz fibrations},
year = {2025},
Note = {arXiv:2507.21293}}

@article {BaHa,
    AUTHOR = {Baykur, R. \.Inan\c{c} and Hayano, Kenta},
     TITLE = {Multisections of {L}efschetz fibrations and topology of
              symplectic 4-manifolds},
   JOURNAL = {Geom. Topol.},
  FJOURNAL = {Geometry \& Topology},
    VOLUME = {20},
      YEAR = {2016},
    NUMBER = {4},
     PAGES = {2335--2395}}

@article {SSW,
    AUTHOR = {Stipsicz, Andr\'as I. and Szab\'o, Zolt\'an and Wahl,
              Jonathan},
     TITLE = {Rational blowdowns and smoothings of surface singularities},
   JOURNAL = {J. Topol.},
  FJOURNAL = {Journal of Topology},
    VOLUME = {1},
      YEAR = {2008},
    NUMBER = {2},
     PAGES = {477--517}}

@misc{HRW,
Author = {Min,  Hyunki  and Roy, Agniva and Wang, Luya},
Title = {Spinal open books and symplectic fillings with exotic fibers},
Note = {arXiv:2410.10697},
Year = {2024}}

@article {ACampo,
    AUTHOR = {A'Campo, Norbert},
     TITLE = {Le groupe de monodromie du d\'{e}ploiement des singularit\'{e}s
              isol\'{e}es de courbes planes. {I}},
   JOURNAL = {Math. Ann.},
  FJOURNAL = {Mathematische Annalen},
    VOLUME = {213},
      YEAR = {1975},
     PAGES = {1--32}}

@article {Lauf,
    AUTHOR = {Laufer, Henry B.},
     TITLE = {Taut two-dimensional singularities},
   JOURNAL = {Math. Ann.},
  FJOURNAL = {Mathematische Annalen},
    VOLUME = {205},
      YEAR = {1973},
     PAGES = {131--164}}

@article {CS,
	AUTHOR = {Cohen, Daniel C. and Suciu, Alexander I.},
	TITLE = {The braid monodromy of plane algebraic curves and hyperplane
	arrangements},
	JOURNAL = {Comment. Math. Helv.},
	FJOURNAL = {Commentarii Mathematici Helvetici},
	VOLUME = {72},
	YEAR = {1997},
	NUMBER = {2},
	PAGES = {285--315},
	ISSN = {0010-2571},
	MRCLASS = {52B30 (14H30 20F36 57M05)},
	MRNUMBER = {1470093},
	MRREVIEWER = {Lee Rudolph},
	DOI = {10.1007/s000140050017},
	URL = {https://doi.org/10.1007/s000140050017},
}

@article {djvs,
    AUTHOR = {de Jong, T. and van Straten, D.},
     TITLE = {Deformation theory of sandwiched singularities},
   JOURNAL = {Duke Math. J.},
  FJOURNAL = {Duke Mathematical Journal},
    VOLUME = {95},
      YEAR = {1998},
    NUMBER = {3},
     PAGES = {451--522},
      ISSN = {0012-7094},
   MRCLASS = {14B07 (14J17 32S30)},
  MRNUMBER = {1658768},
MRREVIEWER = {Gerhard Pfister},
       DOI = {10.1215/S0012-7094-98-09513-8},
       URL = {https://doi.org/10.1215/S0012-7094-98-09513-8}}

@phdthesis{fowler2013rational,
	title={Rational homology disk smoothing components of weighted homogeneous surface singularities},
	author={Fowler, Jacob R},
	year={2013},
	school={The University of North Carolina at Chapel Hill}
}

@article{wahl2011rational,
	title={On rational homology disk smoothings of valency 4 surface singularities},
	author={Wahl, Jonathan},
	journal={Geometry \& Topology},
	volume={15},
	number={2},
	pages={1125--1156},
	year={2011},
	publisher={Mathematical Sciences Publishers}
}

@article{Tes,
	title={The hunting of invariants in the geometry of discriminants},
	author={Teissier, Bernard},
	journal={Real and complex singularities, Oslo},
	pages={565--677},
	year={1976}
}

@article{ARVOLA,
	title = {The fundamental group of the complement of an arrangement of complex hyperplanes},
	journal = {Topology},
	volume = {31},
	number = {4},
	pages = {757-765},
	year = {1992},
	issn = {0040-9383},
	doi = {https://doi.org/10.1016/0040-9383(92)90006-4},
	url = {https://www.sciencedirect.com/science/article/pii/0040938392900064},
	author = {William A. Arvola}
}

@article{KorOz,
	title={On sections of elliptic fibrations},
	author={Korkmaz, Mustafa and Ozbagci, Burak},
	journal={Michigan Mathematical Journal},
	volume={56},
	number={1},
	pages={77--87},
	year={2008},
	publisher={University of Michigan, Department of Mathematics}
}

@book{HarKir,
	title={Handlebody decompositions of complex surfaces},
	author={Harer, John and Kas, Arnold and Kirby, Robion C},
	volume={350},
	year={1986},
	publisher={American Mathematical Soc.}
}

@article{stipsicz07,
	title={Singular fibers in elliptic fibrations on the rational elliptic surface},
	author={Stipsicz, Andr{\'a}s I and Szab{\'o}, Zolt{\'a}n and Szil{\'a}rd, {\'A}gnes},
	journal={Periodica Mathematica Hungarica},
	volume={54},
	number={2},
	pages={137--162},
	year={2007},
	publisher={Springer}
}

@article{CPP,
	title = {On the contact boundaries of normal surface singularities},
	journal = {Comptes Rendus Mathematique},
	volume = {339},
	number = {1},
	pages = {43-48},
	year = {2004},
	issn = {1631-073X},
	doi = {https://doi.org/10.1016/j.crma.2004.04.023},
	url = {https://www.sciencedirect.com/science/article/pii/S1631073X04002316},
	author = {Clément Caubel and Patrick Popescu-Pampu},
}

@article{bhupal2011weighted,
	title={Weighted homogeneous singularities and rational homology disk smoothings},
	author={Bhupal, Mohan and Stipsicz, Andr{\'a}s I},
	journal={American journal of mathematics},
	volume={133},
	number={5},
	pages={1259--1297},
	year={2011},
	publisher={Johns Hopkins University Press}
}
\bibliographystyle{alpha}

\end{document}